\newtheorem{lem}{Lemma}[section]
\newtheorem{thm}[lem]{Theorem}
\newtheorem{pro}[lem]{Proposition}
\newcommand{\slrw}[1]{\stackrel{#1}{\longrightarrow}}
\newcommand{\lrw}{\longrightarrow}
\newcommand{\LL}{\Lambda}
\newcommand{\xa}{\alpha}
\newcommand{\xb}{\beta}
\newcommand{\xc}{\gamma}
\newcommand{\xl}{\lambda}
\newcommand{\be}{\mathbf e}
\newcommand{\bi}{\mathbf i}
\newcommand{\bj}{\mathbf j}
\newcommand{\bv}{\mathbf v}
\newcommand{\bzero}{\mathbf 0}
\newcommand{\caD}{\mathcal D}
\newcommand{\caDb}{\mathcal D^b}
\newcommand{\caM}{\mathcal M}
\newcommand{\caK}{\mathcal K}
\newcommand{\RHom}{\mathbf{R}\strut\kern-.2em\operatorname{Hom}\nolimits}
\newcommand{\zZ}{{\mathbb Z}}
\newcommand{\tL}{\widetilde{\LL}}
\newcommand{\dtL}{\Delta{\LL}}
\newcommand{\dtnL}{\Delta_{\nu}{\LL}}
\newcommand{\ohL}{\widehat{\LL}}
\newcommand{\ohG}{\widehat{\GG}}
\newcommand{\diag}{\mathrm{diag}}
\newcommand{\GG}{\Gamma}
\newcommand{\tG}{\widetilde{\Gamma}}
\newcommand{\trho}{\tilde{\rho}}
\newcommand{\tP}{\tilde{P}}
\newcommand{\wht}{\widehat}
\newcommand{\tQ}{\widetilde{Q}}
\newcommand{\olQ}{\overline{Q}}
\newcommand{\om}[1]{\Omega^{#1}}
\newcommand{\sk}[1]{^{(#1)}}
\newcommand{\xrt}[1]{\xi_{r_{#1}}^{i_{#1}}}
\newcommand{\xrmt}[1]{\xi_{r_{#1}}^{-i_{#1}}}
\newcommand{\zzs}[1]{{\mathbb Z|_{#1} }}
\newcommand{\vv}{\diamond}
\newcommand{\mmod}{\mathrm{mod}\,}
\newcommand{\grm}{\mathrm{gr}\,}
\newcommand{\bam}{{\overline{m}}}
\newcommand{\dmn}{\mathrm{dim}\,}
\newcommand{\dmv}{\mathbf{dim}\,}
\newcommand{\gkdim}{\mathrm{GKdim}\,}
\newcommand{\ldmv}{\mathbf{l.dim}\,}
\newcommand{\arr}[2]{\begin{array}{#1}#2\end{array}}
\newcommand{\mat}[2]{\left(\begin{array}{#1}#2\end{array}\right)}
\newcommand{\eqqc}[2]{\begin{equation}\label{#1}#2\end{equation}}
\newcommand{\eqqcn}[2]{\[ #2 \]}
\newcommand{\eqqcnn}[2]{$ #2 $}
\begin{document}
\title{On $n$-slice Algebras and Related Algebras}
\author{Jin Yun Guo, Cong Xiao and Xiaojian Lu}
\address{Jin Yun Guo, Cong Xiao and Xiaojian Lu\\LCSM( Ministry of Education), School of Mathematics and Statistics\\ Hunan Normal University\\ Changsha, Hunan 410081, P. R. China}

\thanks{This work is supported by Natural Science Foundation of China \#11671126, and by the Construct Program of the Key Discipline in Hunan Province}

\begin{abstract}
The $n$-slice algebra is introduced as a generalization of path algebra in  higher dimensional representation theory.
In this paper, we give a classification of $n$-slice algebras via their $(n+1)$-preprojective algebras  and the trivial extensions of their quadratic duals.
One can always relate tame $n$-slice algebras to the McKay quiver of a finite subgroup of $\mathrm{GL}(n+1, \mathbb C)$.
In the case of $n=2$, we describe the relations for the $2$-slice algebras related to the McKay quiver of finite Abelian subgroups of $\mathrm{SL}(3, \mathbb C)$ and of the finite subgroups obtained from embedding $\mathrm{SL}(2, \mathbb C)$ into $\mathrm{SL}(3,\mathbb C)$.

\end{abstract}

\maketitle

\section{Introduction}
\label{intro}
Path algebras are very important in representation theory and related mathematical fields, it is interesting to study their counterparts in higher representation theory \cite{i11,hio14}.
The $n$-slice algebra is introduced and studied in \cite{g19b} as the quadratic dual of an $n$-properly-graded algebra whose trivial extension is an $n$-translation algebra (see Section \ref{sec:pre} for the details).
The $n$-slice algebra was  called dual $\tau$-slice algebras in \cite{g20,gx20}, for studying algebras related to the higher representation theory introduced and studied  by Iyama and his coauthors \cite{i11,io11,hio14}. The $n$-slice algebras bear some interesting features of path algebras in higher dimensional representation theory setting: the $\tau_n$-closures, higher dimensional counterparts of preprojective and preinjective components, are truncations of $\zZ|_{n-1} Q$, a higher version of $\zZ Q$ \cite{g19b}, and the $n$-APR tilts and colts can be realized by $\tau$-mutations  in  $\zzs{n-1} Q^{\perp,op}$ \cite{gx20}, which is obtained by removing a source (resp. sink) and adding a corresponding sink (resp. source).

\medskip

It is well known that the path algebras of an acyclic quiver are classified as finite, tame and wild representation types according to their quivers.
This classification has a great influence in representation theory.
When Iyama and his coauthors study $n$-hereditary algebras, they define and study $n$-representation-finite and $n$-representation-infinite algebras, they also define and study $n$-representation-tame algebras inside $n$-representation-infinite algebras \cite{io11,io13,hio14}.
Recently, Lee, Li, Mills, Schiffler and Seceleanu give a new characterization of such classification for acyclic quivers in terms of their frieze varieties \cite{llmss20}. The classification of path algebras can also be read out from their preprojective algebras, that is, a path algebra is of finite representation type if and only if its preprojective algebra is finite dimensional, is of tame representation type if and only if its preprojective algebra is of finite Gelfand-Kirilov dimension, and is of wild representation type if and only if its preprojective algebra is of infinite Gelfand-Kirilov dimension \cite{dr81}.

Regarding $n$-slice algebra as a higher version of path algebra, in this paper, we generalize the classification of path algebras via  $(n+1)$-preprojective algebras and related $n$-translation algebras.
By using the results in \cite{gw00}, we get that for an $n$-slice algebra, its $(n+1)$-preprojective algebra is either of finite dimension, of finite Gelfand-Kirilov dimension or of infinite Gelfand-Kirilov dimension.
The trivial extension of the  quadratic dual of an $n$-slice algebra is a stable $n$-translation algebra, so it is either periodic, of finite complexity and of infinite complexity, accordingly.
In this way we classify the $n$-slice algebras as finite type, tame type or wild type according to their $(n+1)$-preprojective algebras or according to the trivial extensions of their quadratic duals.

\medskip

McKay quiver for a finite subgroup of a general linear group is introduced in \cite{m79}, it connecting representation theory, singularity theory and many other mathematical fields.
McKay quiver are also very interesting in studying higher representation theory of algebras \cite{hio14,g19b}.
Over the algebraically closed field of characteristic zero, the preprojective algebras of path algebras of tame type are Morita equivalent to the skew group algebras of finite subgroups of $\mathrm{SL}(2,\mathbb C)$ over the polynomial algebra of two variables \cite{rv89,ch98}.
Inspired by this results, it is observed in \cite{gm02} that  the skew group algebras of a finite subgroup $G$ of $\mathrm{GL}(n+1,\mathbb C)$ over the exterior algebra of an $(n+1)$-dimensional vector space, and over polynomial algebra of $n$ variables, they have the  McKay quiver of $G$ as their Gabriel quivers.
The former of these algebras is an $n$-translation algebra and the later is a Noetherian Artin-Schelter regular algebra of Gelfand-Kirilov dimension $n+1$.
In this paper, we show that by constructing universal cover, taking $\tau$-slices, and then taking quadratic duals, we obtain tame $n$-slice algebras associate to the McKay quiver of a finite subgroup of $\mathrm{GL}(n+1,\mathbb C)$.
In this process, we get three pairs of quadratic dual algebras related to a McKay quiver.
We present some interesting equivalences of triangulate categories  related to these algebras, among them we get a version of Beilinson correspondence and a version of Bernstein-Gelfand-Gelfand correspondences related to McKay quivers.

We also describe the quivers and relations for the tame $2$-slice algebras related to McKay quivers.
We characterized the relations for $2$-slice algebras associated to the McKay quivers of finite Abelian subgroup of $\mathrm{SL}(3,\mathbb C)$ and of finite subgroup of $\mathrm{SL}(3,\mathbb C)$ obtained by embedding $\mathrm{SL}(2,\mathbb C)$ into $\mathrm{SL}(3,\mathbb C)$.
The relations of the $2$-translation algebras and AS-regular algebras related to the McKay quivers are described first, and the quivers and relations for the associated $2$-slice algebras follow immediately.

The paper is organized as follows.
In Section \ref{sec:pre}, concepts and results needed in this paper are recalled.
We recall the constructions of the $n$-translation quivers related to an $n$-properly-graded quiver, and the algebras related to an $n$-slice algebra.
We also recalled notions and results in \cite{gw00} needed in Section  \ref{sec:class}.
In Section \ref{sec:class}, we discuss the classification of $n$-slice algebras via the stable $n$-translation algebras and the $(n+1)$-preprojective algebras related to them.
In Section \ref{sec:mckayq}, we discuss the McKay quivers of some finite subgroups of $\mathrm{SL}(n+1,\mathbb C)$.
We also discuss the tame $n$-slice algebras associated to the McKay quivers of the finite subgroups in $\mathrm{SL}(n+1,\mathbb C)$ and present the equivalences of the related triangulate categories.
In the last section, we describe the relations for the  stable $2$-translation algebra, the AS-regular algebras and $2$-slice algebras related to the McKay quivers of finite Abelian subgroups of $\mathrm{SL}(3,\mathbb C)$ and finite subgroups of $\mathrm{SL}(3,\mathbb C)$ obtained by embedding $\mathrm{SL}(2,\mathbb C)$ into $\mathrm{SL}(3,\mathbb C)$.

\section{Preliminary}
\label{sec:pre}
In this paper, we assume that $k$ is a field, and $\LL = \LL_0 + \LL_1+\cdots$ is a graded algebra over $k$ with $\LL_0$ a direct sum of copies of $k$ such that $\LL$ is generated by $\LL_0$ and $\LL_1$.
Such algebra is determined by a bound quiver $Q= (Q_0,Q_1, \rho)$ \cite{g16}.

Recall that a bound quiver $Q= (Q_0,Q_1, \rho)$ is a quiver with $Q_0$ the set of vertices, $Q_1$ the set of arrows and $\rho$ a set of relations.
The arrow set $Q_1$ is usually defined with two maps $s, t$ from $Q_1$ to $Q_0$ to assign an arrow $\alpha$ its starting vertex $s(\alpha)$ and its ending vertex $t(\alpha)$.
Write $Q_t$ for the set of paths of length $t$ in the quiver $Q$, and write $kQ_t$ for space spanned by $Q_t$.
Let $kQ =\bigoplus_{t\ge 0} kQ_t$ be the path algebra of the quiver $Q$ over $k$.
We also write  $s(p)$ for the starting vertex of a path $p$ and $t(p)$ for the terminating vertex of $p$.
Write $s(x)=i$ if $x$ is a linear combination of paths starting at vertex $i$, and  write $t(x)=j$ if $x$ is a linear combination of paths ending at vertex $j$.
The relation set $\rho$ is a set of linear combinations of paths of length $\ge 2$. We may assume that the paths appearing in each of the element in $\rho$ have the same length, since we deal with graded algebra.
Through this paper, we assume that the relations are normalized such that each element in $\rho$ is a linear combination of paths of the same length starting at the same vertex and ending at the same vertex.
Conventionally, modules are assumed to be finitely generated left module in this paper.

Let $\LL_0=  \bigoplus\limits_{i\in Q_0} k_i$, with $k_i \simeq k$ as algebras, and let $e_i$ be the image of the identity in the $i$th copy of $k$ under the canonical embedding.
Then $\{e_i| i \in Q_0\}$ is a complete set of orthogonal primitive idempotents in $\LL_0$ and $\LL_1 = \bigoplus\limits_{i,j \in Q_0 }e_j \LL_1 e_i$ as $\LL_0 $-bimodules.
Fix a basis $Q_1^{ij}$ of $e_j \LL_1 e_i$ for any pair $i, j\in Q_0$, take the elements of $Q_1^{ij}$ as arrows from $i$ to $j$, and let $Q_1= \cup_{(i,j)\in Q_0\times Q_0} Q_1^{ ij}.$
Thus $\LL \simeq kQ/(\rho)$ as algebras for some relation set $\rho$, and $\LL_t \simeq kQ_t /((\rho)\cap kQ_t) $ as $\LL_0$-bimodules.
A path whose image is nonzero in $\LL $ is called a bound path.

A bound quiver $Q= (Q_0,Q_1,\rho)$ is called  {\em quadratic} if $\rho$ is a set of linear combination of paths of length $2$.
In this case, $\LL= kQ/(\rho)$ is called an {\em quadratic algebra}.
Regard the idempotents $e_i$ as the linear functions on $\bigoplus_{i\in Q_0} k e_i$ such that $e_i(e_j) = \delta_{i,j}$, and for $i,j\in Q_0$, arrows from $i$ to $j$ are also regarded as the linear functions on $e_j k Q_1 e_i$ such that for any arrows $\xa, \xb$, we have $\xa(\xb) =\delta_{\xa,\xb}$.
By defining $\xa_1 \cdots \xa_r (\xb_1, \cdots, \xb_r)=\xa_1(\xb_1) \cdots \xa_r ( \xb_r) $, $e_j kQ_r e_i$ is identified with its dual space for each $r$ and each pair $i,j$ of vertices, and the set of paths of length $r$ is regarded as the dual basis to itself in $e_j kQ_r e_i$.
Take a spanning set $ \rho^{\perp}_{i,j}$ of orthogonal subspace of the set $e_j k\rho e_i$ in the space $e_j kQ_2 e_i$ for each pair $i,j \in Q_0$, and set \eqqc{relationqd}{\rho^{\perp} = \bigcup_{i,j\in Q_0} \rho^{\perp}_{i,j}.}
The {\em quadratic dual quiver} of $Q$ is defined as the bound quiver $Q^{\perp} =(Q_0,Q_1, \rho^{\perp})$, and the {\em quadratic dual algebra} of $\LL$ is the algebra $\LL^{!,op} \simeq kQ/(\rho^{\perp})$ defined by the bound quiver $Q^{\perp}$ (see \cite{g20}).

\medskip

\subsection{$n$-properly-graded quiver and related $n$-translation quivers}
To define and study $n$-slice and $n$-slice algebra, we need  $n$-properly-graded quiver and related quivers.

Recall that a bound quiver $Q =(Q_0,Q_1,\rho)$  is called {\em $n$-properly-graded} if all the maximal bound paths have the same length $n$.
The graded algebra $\LL = k Q/(\rho)$ defined by an  $n$-properly-graded quiver is called an {\em $n$-properly-graded algebra.}

Let $Q$ be an $n$-properly-graded quiver with $n$-properly-graded algebra $\LL$, let $\caM = \caM_Q$ be a  maximal linearly independent set of maximal bound paths in $Q$.
Define {\em the returning arrow quiver} $\tQ= (\tQ_0,\tQ_1)$ with vertex set $\tQ_0 = Q_0$, arrow set $\tQ_1 = Q_1\cup Q_{1,\caM}$, with $Q_{1,\caM} = \{\beta_{p}: t(p) \to s(p)|  p\in \caM\}$.
By proposition 3.1 of \cite{g20}, $\tQ$ is the quiver of the trivial extension $\dtL$ of $\LL$.
So there is a relation set $\trho$, such that $\dtL \simeq k\tQ/(\trho)$.
We use the same notation $\tQ$ for the bound quiver $\tQ= (\tQ_0,\tQ_1,\trho)$.
By Lemma 3.2 of \cite{g20}, the relation set $\trho$ can be chosen as a union $ \rho \cup \rho_{\caM} \cup \rho_0$ of three sets, the relation set $\rho$ of $Q$, the set $\rho_{\caM}$ of all paths formed by two returning arrows from $Q_{1,\caM}$, and a set $\rho_0$ of linear combinations of paths containing exactly one returning arrow.

The $n$-translation quiver and $n$-translation algebra are introduced to study higher representation theory related to graded self-injective algebras in \cite{g16}.
The bound quivers of a graded self-injective algebras are exactly {\em the stable $n$-translation quivers}, with the Nakayama permutation $\tau$ as its $n$-translation \cite{g16} (called stable bound quiver of Loewy length $n+2$ in \cite{g12}).
Since $\dtL$ is a graded symmetric algebra, the returning arrow quiver $\tQ$ is a stable $n$-translation quiver with trivial $n$-translation.

With an $n$-properly-graded quiver $Q$, we can also define an acyclic stable $n$-translation quiver $\zzs{n-1} Q$, via its returning arrow quiver $\tQ$, as the bound quiver  with vertex set \eqqcn{vertexzq}{(\zZ|_{n-1} Q)_0 =\{(u , t)| u\in Q_0, t \in \zZ\}} and arrow set \eqqcn{arrowzq}{(\zZ|_{n -1}Q)_1  =  \zZ \times Q_1 \cup \zZ \times Q_{1,\caM} , } with $$ \zZ \times Q_1= \{(\alpha,t): (i,t)\longrightarrow (i',t) | \alpha:i\longrightarrow i' \in Q_1, t \in \zZ\},$$ and $$ \zZ \times Q_{1,\caM}= \{(\beta_p , t): (i', t) \longrightarrow (i, t+1) | p\in \caM, s(p)=i,t(p)=i'  \}.$$
The relation set of $\zZ|_{n-1} Q$ is defined as  \eqqcnn{relzq}{\rho_{\zZ|_{n-1} Q} =\zZ \rho\cup \zZ \rho_{\caM} \cup \zZ\rho_0,} where
\eqqcn{rela}{\zZ \rho =  \{\sum_{s} a_s (\xa_s,t)(\xa'_s,t) |\sum_{s} a_s \xa_s \xa'_s \in \rho, t\in \zZ\},}
\eqqcn{relb}{\zZ \rho_{\caM} =  \{(\beta_{p'},t)(\beta_p ,t+1)| \beta_{p'} \beta_{p}\in \rho_{\caM}, t\in \zZ\},} and \Small\eqqcn{relc}{\zZ \rho_0 = \{ \sum_{s'} a_{s'}  (\beta_{p'_{s'}}, t)  (\xa'_{s'}, t+1) +  \sum_{s} b_s (\xa_s,t)  (\beta_{p_s} ,t)|  \sum_{s'} a_{s'} \beta_{p'_{s'}} \xa'_{s'} + \sum_{s} b_s \xa_s \beta_{p_s} \in \rho_0 , t\in \zZ\},}\normalsize
when the returning arrow quiver $\tQ$ is quadratic.
The quiver $\zZ|_{n-1} Q$ is a  locally finite infinite quiver if $Q$ is locally finite.
It is a quiver with infinite copies of $Q$ with successive two neighbours connected by the returning arrows.
With the $n$-translation $\tau$ defined by sending each vertex $(i,t)$ to $(i,t-1)$, $\zzs{n-1}Q$ is a stable $n$-translation quiver.

Recall a {\em complete $\tau$-slice} in an acyclic stable $n$-translation quiver is a convex full bound sub-quiver which intersect each $\tau$-orbit exactly once.
It is obvious that each copy of $Q$ in $\zzs{n-1} Q$ is a complete $\tau$-slice.

Given a finite stable $n$-translation quiver $\tQ$, we can construct another infinite acyclic stable $n$-translation quiver $\zZ_{\vv} \tQ=(\zZ_{\vv} \tQ_0,\zZ_{\vv} \tQ_1, \rho_{\zZ_{\vv} \tQ}) $ as follows (denoted by $\olQ$ and called separated directed quiver in \cite{g12}):
the vertex set: \eqqcn{vertexzv}{\zZ_{\vv} \tQ_0=\{(i,n) | i \in Q_0, n \in \mathbb Z\};}
the arrow set: \eqqcn{arrowzv}{\zZ_{\vv} \tQ_1 = \{(\alpha, n):(i,n) \to (j,n+1)|\alpha: i \to j \in Q_1, n \in \mathbb Z \}.}
If $p = \alpha_s \cdots \alpha_1 $ is a path in $\tQ$, define $p[m] = (\alpha_s,m+s-1) \cdots (\alpha_1,m)$, and for an element $z=\sum_{t} a_tp_t$ with each $p_t$ a path in $\tQ$, $a_t \in k$, define $z[m]=\sum_{t} a_tp_t[m]$ for each $m \in \mathbb Z$.
Define relations
\eqqc{relationzv}{\rho_{\zZ_{\vv} \tQ} =\{\zeta[m]| \zeta \in \trho, m \in \mathbb Z \}}
here $\zeta[m] = \sum_{t} a_t p_t[m]$ for each $\zeta = \sum_{t} a_t p_t \in \rho $.
$\zZ_{\vv} \tQ$ is a locally finite bound quiver if $Q$ is so.

A connected quiver $Q$ is called {\em  nicely-graded}  if there is a map $d$ from $Q_0$ to $\zZ$ such that $d(j) = d(i) + 1$ for any arrow $\xa:i\to j$.

Clearly, a nicely graded quiver is acyclic.

\begin{pro}\label{zquivers}
\begin{enumerate}
\item Let $d$ be the greatest common divisor of the length of cycles in $\tQ$, then $\zZ_{\vv} \tQ$ has $d$ connected components.

\item All the connected components of $\zZ_{\vv} \tQ$ are isomorphic.

\item Each connected component of $\zZ_{\vv} \tQ$ is nicely graded quiver.
\end{enumerate}
\end{pro}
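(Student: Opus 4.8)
The plan is to view the projection $\pi\colon\zZ_{\vv}\tQ\to\tQ$, $(i,m)\mapsto i$, $(\xa,m)\mapsto\xa$, as a Galois covering of quivers with group the infinite cyclic group $\langle\sigma\rangle$, where $\sigma$ is the shift $(i,m)\mapsto(i,m+1)$, $(\xa,m)\mapsto(\xa,m+1)$, and to extract all three assertions from it. Give each arrow of $\tQ$ the \emph{weight} $1$ and its formal inverse the weight $-1$, so that every walk $w$ in $\tQ$ has a \emph{signed length} $\ell(w)\in\zZ$; lifting $w$ along $\pi$ from a vertex in level $m$ lands in level $m+\ell(w)$, and every walk in $\zZ_{\vv}\tQ$ projects to a walk in $\tQ$. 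Hence two vertices $(i,m),(i',m')$ lie in the same connected component of $\zZ_{\vv}\tQ$ if and only if $\tQ$ carries a walk from $i$ to $i'$ of signed length $m'-m$. Since $Q$, and therefore $\tQ$, is connected, fixing a base vertex the set $H$ of signed lengths of closed walks is a subgroup of $\zZ$, every connected component of $\zZ_{\vv}\tQ$ meets the fibre over the base vertex, and the components are in bijection with $\zZ/H$. Thus (1) will follow once I show $H=d\zZ$.

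Granting $H=d\zZ$, part (1) is done, and part (2) follows at once: $\sigma$ is an automorphism of the \emph{bound} quiver $\zZ_{\vv}\tQ$, since $\rho_{\zZ_{\vv}\tQ}=\{\zeta[m]\mid\zeta\in\trho,\ m\in\zZ\}$ and $\sigma(\zeta[m])=\zeta[m+1]$; it carries the component of $(i,m)$ to the component of $(i,m+1)$, so $\langle\sigma\rangle$ permutes the $d$ components transitively and suitable powers of $\sigma$ give isomorphisms between any two of them. Part (3) needs nothing beyond the construction: every arrow of $\zZ_{\vv}\tQ$ has the form $(i,m)\to(j,m+1)$, so the level map $(i,m)\mapsto m$ is a grading map in the required sense, and its restriction to a connected component exhibits that component as nicely graded.

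What remains is $H=d\zZ$, which I expect to be the only genuine difficulty. The inclusion $d\zZ\subseteq H$ is immediate, a directed cycle being a closed walk whose signed length is its ordinary length, and such lengths generating $d\zZ$ by definition of $d$. For the opposite inclusion I would first establish the structural fact that every arrow of $\tQ$ lies on a directed cycle: as $\dtL$ is a graded symmetric algebra of Loewy length $n+2$, the indecomposable projective $\dtL e_i$ has simple socle concentrated in degree $n+1$, so for any arrow $\xa$ with source $i$ the submodule $\dtL\xa\neq 0$ contains that socle, giving a path $q$ with $q\xa$ a nonzero cycle at $i$ of length $n+1$. Then, given an arbitrary closed walk $w$, I replace each backward traversal of an arrow by the complementary directed path inside a fixed directed cycle through that arrow; each replacement changes $\ell(w)$ by the length of a directed cycle, hence by an element of $d\zZ$, and after finitely many replacements $w$ becomes an all-forward closed walk, i.e.\ a concatenation of directed cycles, whose signed length lies in $d\zZ$. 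Hence $\ell(w)\in d\zZ$, so $H\subseteq d\zZ$. The load-bearing step is precisely ``every arrow lies on a directed cycle'', and the essential input there is the triviality of the Nakayama permutation of the symmetric algebra $\dtL$.
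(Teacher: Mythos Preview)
Your argument is correct and complete. The paper's own proof of this proposition consists entirely of citations---Propositions~4.3 and~4.5 of \cite{g12} for parts (1) and (2), and the observation that (3) is immediate from the construction of $\zZ_{\vv}\tQ$---so you have supplied a self-contained proof where the paper defers to an external reference. The Galois-covering viewpoint and the identification of components with $\zZ/H$ is exactly the natural line, and your reduction of $H=d\zZ$ to the claim that every arrow lies on a directed cycle is clean.

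One small remark on generality: the paper phrases the proposition for an arbitrary finite stable $n$-translation quiver $\tQ$, whose associated graded self-injective algebra need not be symmetric (the $n$-translation $\tau$ is the Nakayama permutation, which is trivial only in the returning-arrow-quiver case you treat). Your proof of ``every arrow lies on a directed cycle'' leans on that triviality. The adjustment is painless: for general $\tL$, an arrow $\xa$ with source $i$ still extends to a bound path of length $n+1$, but ending at $\tau(i)$ rather than $i$; since $\tau$ has finite order on the finite set $\tQ_0$, concatenating such maximal paths along the $\tau$-orbit of $i$ produces a closed directed walk through $\xa$, and your replacement argument then goes through verbatim. For the applications in the paper (where $\tQ$ is always the returning arrow quiver and $\dtL$ is symmetric), your version already suffices.
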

\begin{proof}
The first and the second assertions follow from Proposition 4.3 and Proposition 4.5 of \cite{g12}, respectively.
The last follows from the definition of $\zZ_{\vv} \tQ$.
\end{proof}

An $n$-properly-graded quiver is called {\em  $n$-nicely-graded quiver} if it is nicely-graded.

\begin{pro}\label{zquiversnicely}
Assume that $Q$ is an $n$-nicely-graded quiver.
Let $\tQ$ be the returning arrow quiver of $Q$.
Then $\zzs{n-1} Q$ is isomorphic to a connected component of $\zZ_{\vv} \tQ$, hence is also nicely graded.
\end{pro}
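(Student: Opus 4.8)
The plan is to write down an explicit isomorphism of bound quivers from $\zzs{n-1}Q$ onto one connected component of $\zZ_{\vv}\tQ$. Since $Q$ is nicely-graded we may fix a grading function $d\colon Q_0\to\zZ$ with $d(j)=d(i)+1$ for every arrow $\alpha\colon i\to j$ of $Q_1$; note that every $p\in\caM$ has length $n$, so $d(t(p))=d(s(p))+n$. Using $d$ I would define $\phi$ on vertices by
\[
\phi(u,t)=(u,\,d(u)+(n+1)t),
\]
and on arrows by sending the arrow $(\alpha,t)\colon(i,t)\to(i',t)$ of $\zzs{n-1}Q$ coming from $\alpha\colon i\to i'$ in $Q_1$ to $(\alpha,\,d(i)+(n+1)t)$, and sending the returning arrow $(\beta_p,t)\colon(t(p),t)\to(s(p),t+1)$ to $(\beta_p,\,d(t(p))+(n+1)t)$. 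Since every arrow of $\zZ_{\vv}\tQ$ raises the second coordinate by exactly $1$, checking that $\phi$ is a well-defined morphism of quivers reduces to the identities $d(i')+(n+1)t=(d(i)+(n+1)t)+1$ and $d(s(p))+(n+1)(t+1)=(d(t(p))+(n+1)t)+1$, the latter using $d(s(p))=d(t(p))-n$; and $\phi$ is readily checked to be compatible with the $n$-translations (that of $\zzs{n-1}Q$ being the shift $t\mapsto t-1$).

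The heart of the argument is to show that $\phi$ is injective on vertices, restricts to a bijection on arrow sets onto its image, and has image closed under passing to adjacent vertices; granting this, $\phi$ identifies $\zzs{n-1}Q$ with a union of connected components of $\zZ_{\vv}\tQ$, and since $\zzs{n-1}Q$ is connected (because $Q$ is, and consecutive copies of $Q$ inside $\zzs{n-1}Q$ are linked by returning arrows) this union is a single component $C$. Injectivity on vertices is immediate: $\phi(u,t)=\phi(v,s)$ forces $u=v$ and then $(n+1)t=(n+1)s$, so $t=s$. For the remaining two points I would examine the arrows of $\zZ_{\vv}\tQ$ incident with a vertex $\phi(u,t)=(u,d(u)+(n+1)t)$: each comes from an arrow of $\tQ_1=Q_1\cup Q_{1,\caM}$ incident with $u$, where an arrow of $Q_1$ changes the $d$-values of its two endpoints by $+1$ while a returning arrow $\beta_p$ (which runs from $t(p)$ to $s(p)$) changes them by $-n$. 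A short computation with these two cases shows that any vertex joined to $\phi(u,t)$ by an arrow of $\zZ_{\vv}\tQ$ has the form $\phi(w,t')$ with $t'=t$ (arrow in $Q_1$) or $t'=t\pm1$ (returning arrow), and that $\phi$ provides a bijection between the arrows incident with $(u,t)$ in $\zzs{n-1}Q$ — the $Q_1$-arrows inside copy $t$ together with the returning arrows joining copy $t$ to copies $t\pm1$ — and the arrows incident with $\phi(u,t)$ in $\zZ_{\vv}\tQ$. Hence $\phi\colon\zzs{n-1}Q\to C$ is an isomorphism of quivers.

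Finally I would match the relations. Both $\rho_{\zzs{n-1}Q}$ and $\rho_{\zZ_{\vv}\tQ}$ are obtained from the relation set $\trho=\rho\cup\rho_{\caM}\cup\rho_0$ of $\tQ$ by spreading each relation along the $\zZ$-direction, and under $\phi$ the copy at ``time'' $t$ of a relation $\zeta\in\trho$ in $\zzs{n-1}Q$ is carried to $\zeta[m]$, where $m=d(i_\zeta)+(n+1)t$ and $i_\zeta$ is the common starting vertex of $\zeta$; as $t$ ranges over $\zZ$ these are exactly the relations $\zeta[m]$ of $\zZ_{\vv}\tQ$ all of whose paths lie in $C$ (equivalently, $m\equiv d(i_\zeta)\pmod{n+1}$). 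Thus $\phi$ identifies $\rho_{\zzs{n-1}Q}$ with the part of $\rho_{\zZ_{\vv}\tQ}$ supported on $C$, so $\phi$ is an isomorphism of bound quivers, and $\zzs{n-1}Q$ is nicely graded either by the grading $(u,t)\mapsto d(u)+(n+1)t$ or by Proposition \ref{zquivers}(3). The main obstacle is the bookkeeping in the second paragraph: one must be certain that the $Q_1$-arrows and returning arrows of $\zzs{n-1}Q$ at $(u,t)$ account for \emph{all} arrows of $\zZ_{\vv}\tQ$ at $\phi(u,t)$, each exactly once — and this is exactly where the nicely-graded hypothesis is used, since it makes $d$ well defined with $Q_1$-arrows and returning arrows shifting $d$ by fixed amounts.
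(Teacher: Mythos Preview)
Your proposal is correct and follows essentially the same approach as the paper: both define the explicit map $\phi(u,t)=(u,d(u)+(n+1)t)$ (the paper additionally normalizes by subtracting $d(i_0)$ so that the image contains $(i_0,0)$, a cosmetic difference). The paper simply asserts that ``it is easy to see that $\phi$ is an isomorphism,'' whereas you supply the detailed verification of well-definedness, injectivity, closedness under adjacency, and compatibility with the relations; your added detail is accurate and fills in exactly what the paper leaves to the reader.
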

\begin{proof}
Let $d:Q_0 \to \zZ$ be the function such that $d(j) = d(i)+1$ for an arrow $\xa: i\to j$ in $Q$.
Fix an vertex $i_0\in Q_0$, define \eqqcnn{mapzz}{\phi: \zzs{n-1}Q_0 \to \zZ_{\vv}\tQ} by $$\phi(i,t) = (i,t(n+1)+d(i)-d(i_0)). $$
It is easy to see that $\phi$ is an isomorphism from $\zzs{n-1}Q$ to the connected component of $\zZ_{\vv}\tQ$ containing the vertex $(i_0,0)$.
\end{proof}

By definition, an $n$-translation quiver is an $(n+1)$-properly graded quiver.
The bound quiver $\zzs{n-1} Q$ is an $(n+1)$-nicely graded quiver and the $n$-translation sends the ending vertex of a maximal bound path of length $n+1$ to its starting vertex.
So by the definition of complete $\tau$-slice, we immediately have the following.
\begin{pro}\label{zquiversslice}
If $\tQ$ is a stable  $n$-translation quiver.
Then any complete $\tau$-slice in $\zZ_{\vv} \tQ$ is an $n$-nicely-graded quiver.
\end{pro}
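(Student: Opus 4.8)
The plan is to put an explicit grading on $\zZ_{\vv}\tQ$, restrict it to the complete $\tau$-slice, and then determine the length of the slice's maximal bound paths by comparing them with those of $\zZ_{\vv}\tQ$. First recall that $\zZ_{\vv}\tQ$ is an acyclic stable $n$-translation quiver, hence an $(n+1)$-properly-graded quiver: all its maximal bound paths have length $n+1$, and its $n$-translation $\tau$ carries the terminal vertex of such a path to its initial vertex. From the very definition of $\zZ_{\vv}\tQ$ every arrow runs from a vertex $(i,m)$ to a vertex $(j,m+1)$, so $d(i,m)=m$ satisfies $d(j)=d(i)+1$ along every arrow; in particular $d(\tau w)=d(w)-(n+1)$ for all $w$, and $\tau$ is an automorphism of the bound quiver $\zZ_{\vv}\tQ$. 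Given a complete $\tau$-slice $S$ (which I take to be connected, as in the applications), the restriction of $d$ makes $S$ nicely-graded since $S$ is a full subquiver; so everything comes down to showing that every maximal bound path of $S$ has length exactly $n$.

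I would first dispatch the upper bound. If $S$ had a bound path of length $n+1$ — the maximal length occurring in $\zZ_{\vv}\tQ$ — it would be a maximal bound path of $\zZ_{\vv}\tQ$, and would therefore join some vertex $x$ to $\tau^{-1}x$. But $x$ and $\tau^{-1}x$ lie in a common $\tau$-orbit while $d(x)\neq d(\tau^{-1}x)$, contradicting that $S$ meets each $\tau$-orbit exactly once. Hence every bound path of $S$ has length $\le n$.

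The lower bound is the real work. Since $S$ meets each $\tau$-orbit once, the translates $\tau^{k}S$ ($k\in\zZ$) partition the vertices of $\zZ_{\vv}\tQ$; write $f(w)$ for the integer with $\tau^{-f(w)}w\in S$. Using that $S$ is convex and meets each orbit once — i.e. that $S$ is a section — I would check that $f$ is non-increasing along arrows and drops by at most $1$ along a single arrow (one verifies this for the copies of $Q$ in $\zzs{n-1}Q$, where it is transparent). Since $f(x_{n+1})=f(x_{0})-1$ along any maximal bound path $x_{0}\to\cdots\to x_{n+1}$ of $\zZ_{\vv}\tQ$, exactly one of its $n+1$ arrows is a strict drop and the other $n$ are flat. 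Now take a maximal bound path $q$ of $S$, extend it to a maximal bound path $\hat p=x_{0}\to\cdots\to x_{n+1}$ of $\zZ_{\vv}\tQ$, and write $q$ as the segment $x_{a}\to\cdots\to x_{b}$; then $f\equiv0$ on $x_{a},\dots,x_{b}$, so the unique strict drop of $\hat p$ lies outside $[a,b]$, and on the side of $q$ where it does not occur the value of $f$ remains $0$ up to the corresponding end of $\hat p$. That end-segment of $\hat p$ then lies in $S$, is a bound path of $S$ (here I use that $S$ is a full convex subquiver) containing $q$, hence equals $q$; this pins one end of $q$ to an end of $\hat p$. Repeating with the maximal bound paths of $\zZ_{\vv}\tQ$ that begin and end at the endpoints of $q$ (and with $\tau^{-1}\hat p$, on which $f$ drops by $1$), the maximality of $q$ should force the strict drop of $\hat p$ to be its first or last arrow, so that $q$ — the $S$-portion of $\hat p$ — has length exactly $n$.

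The hard part will be the two structural inputs just invoked: that a complete $\tau$-slice is a section, so that the $\tau$-translates partition the vertices and $f$ behaves along arrows as claimed; and, intertwined with this, that the segments of $\hat p$ I want to land in $S$ (and the concatenations one is tempted to form from $q$ together with pieces of neighbouring maximal bound paths) are genuinely bound paths of $S$. The latter is subtle because a concatenation of two bound paths need not be bound; controlling this is where one must use the convexity of $S$ together with the mesh/self-injectivity relations of $\zZ_{\vv}\tQ$ recorded in \cite{g12,g16}, and it is the step I would expect to require the most care.
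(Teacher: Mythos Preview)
The paper offers essentially no proof here: the sentence preceding the proposition records that an $n$-translation quiver is $(n+1)$-properly-graded, that $\zZ_{\vv}\tQ$ (like $\zzs{n-1}Q$) is nicely graded, and that the $n$-translation carries the terminus of each maximal bound path of length $n+1$ to its origin, and then simply asserts that the result follows ``immediately'' from the definition of a complete $\tau$-slice. Your grading $d(i,m)=m$ and your upper-bound argument (a length-$(n+1)$ bound path in $S$ would join two distinct vertices of a single $\tau$-orbit) are precisely the content of that one line, and they are correct.

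Where you diverge is the lower bound, and here you go far beyond anything in the paper. Introducing the section-function $f$ and tracking its drops along arrows is a legitimate strategy, but---as you yourself flag---the two structural inputs you need (that $f$ changes by $0$ or $-1$ along every arrow, and that the path-segments you want to place in $S$ are genuinely bound) are only sketched, and the final step, where maximality of $q$ is supposed to force the unique drop of $\hat p$ to sit at one of its ends, is not actually carried out. The parenthetical pointer to $\zzs{n-1}Q$ is also slightly misdirected: in $\zZ_{\vv}\tQ$ every arrow already raises the second coordinate by one, so that quiver is the simpler of the two, not a warm-up for it. In short, the paper treats the lower bound as folklore absorbed into the phrase ``by the definition of complete $\tau$-slice'' (the underlying combinatorics of $\tau$-slices having been developed in \cite{g12,g16}), whereas you attempt to rebuild that combinatorics from scratch; your outline is plausible and headed in the right direction, but as written it is an elaborate programme rather than a proof, and it is not what the authors have in mind.
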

Let $m\le l$ be two integers, write $\zZ_\vv Q [m,l]$ for the full subquiver of $\zZ_{\vv}\tQ$ with vertex set \eqqcn{}{\zZ_\vv Q_0 [m,l] = \{(i,t)| m \le t \le l\}.}
$\zZ_\vv Q [m,l]$ is a convex subquiver of $\zZ_{\vv} \tQ$, so it is regarded as a bounded subquiver by taking
\eqqc{relationzvml}{\rho_{\zZ_{\vv} \tQ}[m,l] =\{x| x \in \rho_{\zZ_{\vv} \tQ}, s(x),t(x) \in \zZ_\vv Q_0 [m,l] \}}
By Proposition 6.2 of \cite{g12}, we get the following for any $m$.
\begin{pro}\label{zqslice}
$\zZ_\vv Q [m,m+n]$ is a complete $\tau$-slice of $\zZ_{\vv} \tQ$.
\end{pro}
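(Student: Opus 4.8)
The plan is to check directly the two defining properties of a complete $\tau$-slice in the acyclic stable $n$-translation quiver $\zZ_{\vv}\tQ$: that $\zZ_\vv Q[m,m+n]$ is a convex full bound subquiver, and that it meets every $\tau$-orbit exactly once. The first property is essentially immediate from the construction. By definition $\zZ_\vv Q[m,m+n]$ is a full subquiver; it is convex because every arrow of $\zZ_{\vv}\tQ$ raises the second coordinate by exactly one, so any path between two vertices whose second coordinates lie in $[m,m+n]$ stays within that band; and it is a bound subquiver when equipped with the induced relations $\rho_{\zZ_{\vv}\tQ}[m,m+n]$ of \eqref{relationzvml}. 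So the substance of the proposition is the orbit count, and to carry it out one first needs to pin down the $n$-translation $\tau$ of $\zZ_{\vv}\tQ$.

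The first real step, then, is to read off $\tau$. Since $\tQ$ is a stable $n$-translation quiver it is $(n+1)$-properly-graded, so all of its maximal bound paths have length $n+1$, and its $n$-translation is carried by a bijection $\tau_{\tQ}\colon Q_0\to Q_0$ sending each vertex $i$ to the common initial vertex $\tau_{\tQ}i$ of the maximal bound paths ending at $i$. By the form of the relations in \eqref{relationzv}, a path of $\zZ_{\vv}\tQ$ is bound precisely when its projection to $\tQ$ is bound, equivalently when it is a shift $p[\ell]$ of a bound path $p$ of $\tQ$; hence the maximal bound paths of $\zZ_{\vv}\tQ$ are exactly the shifts $p[\ell]$ of the maximal bound paths $p$ of $\tQ$, all of length $n+1$, each running from $(\tau_{\tQ}i,t)$ to $(i,t+n+1)$. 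Reading the $n$-translation of $\zZ_{\vv}\tQ$ off these maximal bound paths gives $\tau(i,t)=(\tau_{\tQ}i,\,t-(n+1))$; in particular $\tau$ lowers the second coordinate by exactly $n+1$.

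With this in hand the count is bookkeeping modulo $n+1$. Fix a vertex $(i,t)$ of $\zZ_{\vv}\tQ$; its $\tau$-orbit is $\{(\tau_{\tQ}^{k}i,\ t-k(n+1))\mid k\in\zZ\}$, so the second coordinates occurring in the orbit are exactly the integers congruent to $t$ modulo $n+1$, and the orbit contains a single vertex at each such level. Among the $n+1$ consecutive integers $m,m+1,\dots,m+n$ there is exactly one, say $t_0$, with $t_0\equiv t\pmod{n+1}$, and there is a unique $k_0\in\zZ$ with $t-k_0(n+1)=t_0$; hence the orbit meets $\zZ_\vv Q_0[m,m+n]$ in the single vertex $(\tau_{\tQ}^{k_0}i,t_0)$. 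Thus $\zZ_\vv Q[m,m+n]$ meets every $\tau$-orbit exactly once, and combined with the first paragraph it is a complete $\tau$-slice.

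I expect the only step with genuine content to be the identification of $\tau$ as the shift of the second coordinate by $n+1$; after that everything is elementary arithmetic with residues modulo $n+1$. In the write-up I would either cite Proposition 6.2 of \cite{g12} for this, or keep the argument self-contained by deriving it as above from the fact that every maximal bound path of $\zZ_{\vv}\tQ$ has length $n+1$ together with the nicely-graded structure of its components (Proposition \ref{zquivers}(3)). I would also double-check the boundary behaviour already used in the first paragraph: because each arrow of $\zZ_{\vv}\tQ$ raises the second coordinate by one and $[m,m+n]$ is an interval, no relation of $\rho_{\zZ_{\vv}\tQ}$ is truncated in passing to the band, so the bound subquiver structure on $\zZ_\vv Q[m,m+n]$ is the expected one, and by Proposition \ref{zquiversslice} it is an $n$-nicely-graded quiver.
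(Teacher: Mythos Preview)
Your argument is correct. The paper itself does not give a proof at all: it simply records the statement as a consequence of Proposition 6.2 of \cite{g12}. What you have written is essentially a self-contained verification of that cited result, and each step checks out. The identification $\tau(i,t)=(\tau_{\tQ}i,\,t-(n+1))$ is exactly right, since the maximal bound paths of $\zZ_{\vv}\tQ$ are the shifts $p[\ell]$ of maximal bound paths $p$ of $\tQ$, and once that is in hand the orbit count is the residue-class argument you give. The convexity observation (arrows raise the second coordinate by one, so paths cannot leave the band $[m,m+n]$) is also sound. Compared with the paper's bare citation, your write-up has the advantage of being readable without consulting \cite{g12}; you might keep it short by simply stating the form of $\tau$ and the residue argument, and dropping the final paragraph of commentary.
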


The complete $\tau$-slice $\zZ_\vv Q [0,n]$ is called an initial $\tau$-slice in \cite{g12}.
If the quiver $Q$ is nicely graded, by Lemma 6.5 of \cite{g12}, we can recover our quiver $Q$ by using $\tau$-mutations on a connected component of $\zZ_\vv Q [m,m+n]$.

Starting with an acyclic $n$-properly-graded quiver $Q$, we construct its returning arrow quiver $\tQ$, which is a stable $n$-translation quiver.
With $\tQ$, we construct a universal cover $\zzs{n-1} Q$, which is an acyclic stable $n$-translation quiver, or construct a universal cover $\zZ_{\vv} \tQ$, which is a nicely-graded stable $n$-translation quiver.
We can recover $Q$ as a complete $\tau$-slice of $\zzs{n-1} Q$, and when $Q$ is a nicely-graded, we can recover it as a connected component of complete a $\tau$-slice of $\zZ_{\vv} \tQ$.

We have the following picture illustrate the relationship among these quivers.

\tiny \eqqc{depictingqui}{\xymatrix@C=0.8cm@R=2.0cm{ \txt{\bf $n$-properly-graded quiver $ Q$}\ar@[black][dr]|-{\txt{returning arrow quiver }}&& \\
&\txt{ \color{black}{\bf stable $n$-translation quiver $\tQ $}}\ar@[black][dl]|-{\txt{universal cover}} \\
\txt{ \color{black}{\bf acyclic stable $n$-translation quiver $\mathbb Z|_{n-1} Q $}}\ar@[black][uu]|-{\txt{taking $\tau$-slice}}
}}\normalsize

\subsection{$n$-slice algebras and related algebras}
The quadratic dual quiver $Q^{\perp}=(Q_0,Q_1,\rho^{\perp})$ of an  acyclic quadratic $n$-properly-graded quiver $Q$ is called an {\em $n$-slice}.

Recall that a graded algebra $\tL= \sum_{t\ge 0} \tL_t$ is called a {\em $(p,q)$-Koszul algebra} if $\tL_t=0$ for $t> p$ and $\tL_0$ has a graded projective resolution \eqqc{projres_tL}{\cdots \lrw P^{t} \slrw{f_t}  \cdots \lrw P^{1}\slrw{f_1} P^{0} \slrw{f_0} \tL_0\lrw 0,} such that $P^{t}$ is generated by its degree $t$ part for $t\le q$ and $\ker f_q$ is concentrated in degree $q+p$ \cite{bbk02,g16}.
A graded algebra defined by an $n$-translation quiver $\tQ$ is called an {\em $n$-translation algebra}, if there is a $q \ge 2$ or $q=\infty$ such that $\tL$ is an $(n+1,q)$-Koszul algebra \cite{g16}.
A {\em stable $n$-translation algebra} $\tL$ is a $(n+1,q)$-Koszul self-injective algebra for some $q  \ge 2$ or $q=\infty$, and we call $q$ the {\em Coxeter index} of $\tL$.

Let $\LL$ be the $n$-properly-graded algebra defined by $Q$, if the trivial extension $\dtL$ is a stable $n$-translation algebra, the algebra $\GG=kQ/(\rho^\perp)$ defined by the $n$-slice $Q^{\perp}$ is called an {\em $n$-slice algebra}.

The following Proposition justify the name $n$-slice.
\begin{pro}\label{tau-slice}
Let $\GG$ be an acyclic $n$-slice algebra with bound quiver $Q^{\perp}$, then $Q$ is  a complete $\tau$-slice of the $n$-translation quiver $\zzs{n-1} Q$ and $\LL= \GG^{!,op}$ is a $\tau$-slice algebra.
\end{pro}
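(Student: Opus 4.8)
The plan is to unwind the definition of an $n$-slice algebra, realize the required $n$-translation quiver concretely, and then read off both assertions. By hypothesis $\GG=kQ/(\rho^\perp)$ where $Q^\perp$ is an $n$-slice, so $Q=(Q_0,Q_1,\rho)$ is an acyclic quadratic $n$-properly-graded quiver, $\LL=kQ/(\rho)$ is the associated $n$-properly-graded algebra, and the assumption that $\GG$ is an $n$-slice algebra says exactly that the trivial extension $\dtL$ is a stable $n$-translation algebra. In particular $\dtL$ is $(n+1,q)$-Koszul for some $q\ge 2$ or $q=\infty$; since $2\le q$, the projective cover $P^2$ of its relation module is generated in degree $2$, so $\dtL$, and hence its bound quiver --- the returning arrow quiver $\tQ$ of Proposition 3.1 of \cite{g20} --- is quadratic. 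This is exactly what is needed for the relation set $\rho_{\zzs{n-1}Q}$, and thus the bound quiver $\zzs{n-1}Q$, to be defined.

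Next I would check that $\zzs{n-1}Q$ is an acyclic stable $n$-translation quiver with $n$-translation $\tau\colon(i,t)\mapsto(i,t-1)$. Acyclicity is immediate: along every arrow of $\zzs{n-1}Q$ the coordinate $t$ is non-decreasing --- unchanged by an arrow $(\alpha,t)$, raised by one by a returning arrow $(\beta_p,t)$ --- so an oriented cycle would lie in a single copy of $Q$, impossible since $Q$ is acyclic. The $n$-translation quiver structure comes from $\tQ$ being a finite stable $n$-translation quiver (as $\dtL$ is a stable $n$-translation algebra) together with the analysis of the $\zZ$-cover $\tQ\rightsquigarrow\zzs{n-1}Q$ in \cite{g16}: the $(n+1,q)$-Koszul resolution of $(\dtL)_0$ lifts along the cover to the resolution witnessing that $k\zzs{n-1}Q/(\rho_{\zzs{n-1}Q})$ is an $n$-translation algebra. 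I expect this transfer of the homological condition across the cover to be the only step requiring real work, and I would carry it out by invoking the relevant statements of \cite{g16} and \cite{g12} rather than reproving them.

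Granting this, $Q$ appears inside $\zzs{n-1}Q$ as a complete $\tau$-slice, namely the full subquiver $Q^{(0)}$ on $\{(u,0)\mid u\in Q_0\}$: only arrows $(\alpha,0)$ join two of its vertices, so it is full, and the monotonicity of $t$ along paths makes it convex; the $\tau$-orbit of $(u,t)$ is $\{(u,s)\mid s\in\zZ\}$, which meets $Q^{(0)}$ in the single vertex $(u,0)$, so $Q^{(0)}$ meets every $\tau$-orbit once; and among the defining relations $\zZ\rho\cup\zZ\rho_{\caM}\cup\zZ\rho_0$ of $\zzs{n-1}Q$, the elements of $\zZ\rho_{\caM}$ and $\zZ\rho_0$ all involve a returning arrow while the $t=0$ part of $\zZ\rho$ is exactly $\rho$, so the bound quiver induced on $Q^{(0)}$ is isomorphic to $(Q,\rho)$. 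This is the observation already recorded after Proposition \ref{zquiversnicely}.

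Finally, since $Q$ is quadratic the operation $(-)^{!,op}$ is an involution on the corresponding algebras --- taking the orthogonal complement twice inside $\bigoplus_{i,j\in Q_0}e_jkQ_2e_i$ with respect to its natural perfect self-pairing returns the original subspace, so $(\rho^\perp)^\perp$ spans $k\rho$ --- whence $\GG^{!,op}=kQ/((\rho^\perp)^\perp)=kQ/(\rho)=\LL$. As the bound quiver $Q$ of $\LL$ has just been exhibited as a complete $\tau$-slice of the acyclic stable $n$-translation quiver $\zzs{n-1}Q$ (equivalently, as $\dtL$ is a stable $n$-translation algebra), $\LL$ is by definition a $\tau$-slice algebra, and the proof is complete.
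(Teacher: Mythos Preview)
Your proposal is correct and follows essentially the same line as the paper, only with far more detail. In the paper the proposition is stated without a formal proof: the preceding discussion already records that ``each copy of $Q$ in $\zzs{n-1} Q$ is a complete $\tau$-slice'' and that $\zzs{n-1}Q$ is a stable $n$-translation quiver, and the proposition is then offered as an immediate consequence of the definitions (``The following Proposition justify the name $n$-slice''). Your argument simply makes explicit the steps the paper treats as obvious --- quadraticity of $\tQ$ from $(n+1,q)$-Koszulity, acyclicity of $\zzs{n-1}Q$, identification of $Q$ with the $t=0$ copy as a full convex subquiver meeting each $\tau$-orbit once with induced relations $\rho$, and the involutivity of quadratic duality giving $\GG^{!,op}=\LL$ --- so there is nothing to correct.
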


Starting with a quadratic acyclic $n$-properly-graded quiver $Q$, let $\LL$ be the $n$-properly-graded algebra defined by $Q$ and let $\GG$ be the algebra defined by the $n$-slice  $Q^{\perp}$.
If $\tQ$ is a quadratic quiver, write $\tQ^{\perp}$ for its quadratic dual quiver.
In this case, $\zzs{n-1} Q$ and $\zZ_{\vv} Q$ are also quadratic quiver, write $\zzs{n-1} Q^{\perp}$ and $\zZ_{\vv} Q^{\perp}$ for their quadratic dual quivers, respectively.

With the quivers related to $Q$ defined above and their quadratic duals, we have interesting algebras.
We associate the trivial extension $\tL=\dtL$ of $\LL$ to the returning arrow quiver $\tQ$.
If $\GG$ is an $n$-slice algebra, we associate the $(n+1)$-preprojective algebra $\tG= \Pi(\GG)$ of $\GG$ to  a twist $\tQ^{\perp,\nu}$ of its quadratic dual, by corollary 5.4 of \cite{g20}.

We also associate algebras  to the bound quivers $\zzs{n-1} Q$ and $\zZ_{\vv}\tQ$  by constructing smash products over $\tL$.
Note that $\tL$ is generated by the returning arrows over $\LL$.
Taking $\LL$ as degree zero component and taking the returning arrows as degree $1$ generators, we get a $\zZ$-grading on $\tL$ induced by the $n$-translation.
Construct the smash product $\tL \# k\zZ^*$ of $\tL$ with respect to this grading.
It is shown in Section 5 of  \cite{g16} that $\zzs{n-1} Q$ is the bound quiver of $\tL \# k\zZ^*$.

Taking the usual path grading on $\tL$, that is, taking $\LL_0$ as the degree $0$ component and taking the arrows in $\tQ$ as degree $1$ generators, we get a $\zZ$-grading on $\tL$ induced by the lengths of the paths of $k\tQ$.
Construct the smash product of $\tL \#' k\zZ^*$  with respect to this grading.
By Theorem 5.12 of \cite{g12}, $\zZ_{\vv}\tQ$ is the bound quiver of $\tL \#' k\zZ^*$.

So we can get  criteria for $\GG$ to be $n$-slice algebra using  $\tL \# k\zZ^*$ and $\tL \#' k\zZ^*$.
Note that $\tQ$ is always an $n$-translation quiver, so $\tL$ is $n$-translation algebra if and only if there is a $q \ge 2$ or $q=\infty$, such that $\tL$ is $(n+1,q)$-Koszul.
By Propositions 2.2 and  2.6 of \cite{g16}, $\tL$ is $(n+1,q)$-Koszul if and only if $\tL^{!,op}$ is $(q,n+1)$-Koszul.
We also get the following criterion for $\GG$ to be an $n$-slice algebra.

\begin{pro}\label{n-sliceprep}
Let $\GG$ be the algebra defined by an $n$-slice $Q^{\perp}$.
Then $\GG$ is an   $n$-slice algebra if and only if there is a $q \ge 2$ or $q=\infty$, such that the $(n+1)$-preprojective algebra $\Pi(\GG)$ is $(q,n+1)$-Koszul.
\end{pro}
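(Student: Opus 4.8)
The plan is to run everything through quadratic duality, using the chain of equivalences already assembled in the paragraph preceding the statement. By the definition of $n$-slice algebra, $\GG$ is an $n$-slice algebra exactly when the trivial extension $\dtL$ of $\LL$ is a stable $n$-translation algebra, and since $Q$ is quadratic we have $Q = (Q^{\perp})^{\perp}$, so that $\LL = kQ/(\rho) = \GG^{!,op}$. Thus the task reduces to translating the condition ``$\dtL$ is a stable $n$-translation algebra'' into the condition ``$\Pi(\GG)$ is $(q,n+1)$-Koszul for some $q \ge 2$ or $q=\infty$''.

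First I would recall that $\dtL$ is graded symmetric, hence self-injective, and that its bound quiver $\tQ$ is automatically an $n$-translation quiver. Consequently $\dtL$ is a stable $n$-translation algebra if and only if it is an $n$-translation algebra, which by definition holds if and only if there is a $q \ge 2$ or $q=\infty$ with $\dtL$ an $(n+1,q)$-Koszul algebra. Next I would invoke Propositions 2.2 and 2.6 of \cite{g16}: here $\tQ$ is quadratic, so $\dtL$ is $(n+1,q)$-Koszul if and only if its quadratic dual $\dtL^{!,op}$, whose bound quiver is $\tQ^{\perp}$, is $(q,n+1)$-Koszul.

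The remaining step is to compare $\dtL^{!,op}$ with the $(n+1)$-preprojective algebra $\Pi(\GG)$. By Corollary 5.4 of \cite{g20}, $\Pi(\GG)$ has bound quiver a twist $\tQ^{\perp,\nu}$ of $\tQ^{\perp}$; that is, $\Pi(\GG)$ is obtained from $\dtL^{!,op}$ by twisting the multiplication along a graded automorphism $\nu$. I would then check that such a twist preserves the $(q,n+1)$-Koszul property: it changes neither the underlying graded vector space nor the grading, and it sends a graded projective resolution of the degree-$0$ part to one with the same generation degrees in each homological degree $\le q$ and with $\ker f_q$ still concentrated in degree $q+n+1$. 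Hence $\dtL^{!,op}$ is $(q,n+1)$-Koszul if and only if $\Pi(\GG)$ is, and chaining the equivalences yields the statement.

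The main obstacle I anticipate is precisely this last comparison: making the identification with $\Pi(\GG)$ airtight, i.e. controlling the twist from Corollary 5.4 of \cite{g20} and verifying carefully that passing to a $\nu$-twist does not disturb the $(q,n+1)$-Koszul condition. Everything else is an assembly of the definitions of $n$-translation algebra, stable $n$-translation algebra, and $n$-slice algebra, together with the Koszul duality results already quoted in the excerpt.
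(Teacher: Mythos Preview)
Your proposal is correct and follows essentially the same route as the paper: the paper's own ``proof'' is the paragraph immediately preceding the proposition, which records that $\tQ$ is always an $n$-translation quiver, so $\tL=\dtL$ is an $n$-translation algebra iff it is $(n+1,q)$-Koszul for some $q\ge 2$ or $q=\infty$, and then invokes Propositions~2.2 and~2.6 of \cite{g16} to dualize this to the $(q,n+1)$-Koszul condition on $\tL^{!,op}$. Your write-up is in fact more scrupulous than the paper's, since you make explicit the identification of $\Pi(\GG)$ with a $\nu$-twist of $\dtL^{!,op}$ (via Corollary~5.4 of \cite{g20}) and argue that such a twist preserves the $(q,n+1)$-Koszul condition, a point the paper leaves implicit.
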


Write $\zzs{n-1}Q^{\perp}$ and $\zZ_{\vv}\tQ^{\perp}$ for the quadratic duals of $\zzs{n-1}Q$ and $\zZ_{\vv}\tQ$, respectively.
Write $\wht{\LL}$ for the algebra defined by $\zzs{n-1}Q$ or $\zZ_{\vv}\tQ$, and write $\wht{\GG} $ for the algebra defined by $\zzs{n-1}Q^{\perp}$ or $\zZ_{\vv}\tQ^{\perp}$.
We have the following picture depicting the relationship of these algebras.
\eqqc{depictingalg}{\xymatrix@C=2.8cm@R1.5cm{ \txt{$\LL$} \ar@{<->}[rr]|-{\txt{ quadratic dual } }\ar@[black][dr]|-{\txt{trivial\\ extension}}&& \txt{ $\GG $} \ar@[black][dr]|-{\txt{ $(n+1)$-preprojective\\ algebra}} \\
&\txt{ \color{black}{$\tL $}}\ar@[black][dl]|-{\txt{smash product}} \ar@[black]@{<->}[rr]|-(.3){\txt{\color{black}{ quadratic dual }}} &&\txt{\color{black}{$\tG$} }\ar@[black][dl]\\
\txt{ \color{black}{$\ohL $}}\ar@[black][uu]|-{\txt{\color{black}{ $\tau$-slice algebra }}} \ar@[black]@{<->}[rr]|-{\txt{\color{black}{ quadratic dual }}} &&\ar@[black][uu]\txt{\color{black}{$\ohG$} }}
}
The left triangle is induced by the picture \eqref{depictingqui} depicting quivers.
The left vertically up arrow indicate taking a $\tau$-slice algebra when $\ohL = \tL\# k\zZ^*$, and indicate taking a direct summand of $\tau$-slice algebra when $\ohL = \tL\#' k\zZ^*$.

\medskip
\subsection{Loewy matrix and Koszul cone of stable $n$-translation algebra}
Now we assume that $\tL = \sum_{t=0}^{n+1} \tL_t$ is a stable $n$-translation algebra, that is, it is an $(n+1,q)$-Koszul  self-injective algebra, with finite bound quiver $\tQ$. Let $\tQ_0= \{1, 2, \ldots, m\}$.
Then $\tL$ is unital with $1= e_1+\cdots + e_m$.
We need some results in \cite{gw00} concerning the complexities of $\tL$ and the Gelfand-Kirilov of its quadratic dual $\tL^{!,op}$.

For $0\le t\le n+1$, define \eqqcn{Loewya}{A_t(\tL) =\mat{cccc}{\dmn_k e_1 \tL_t e_1 &\dmn_k e_2 \tL_t e_1& \cdots &\dmn_k e_m \tL_t e_1\\ \dmn_k e_1 \tL_t e_2 &\dmn_k e_2 \tL_t e_2& \cdots &\dmn_k e_m \tL_t e_2\\ \cdots&\cdots  & \cdots & \cdots  \\ \dmn_k e_1 \tL_t e_m &\dmn_k e_2 \tL_t e_m& \cdots &\dmn_k e_m \tL_t e_m }.}
Let $E$ be the $m \times m$ identity matrix, then $A_0(\tL) =E$.

The {\em Loewy matrix $L(\tL)$} for $\tL$ is defined in \cite{gw00} as a matrix
\eqqc{Loewym}{L(\tL)= \mat{cccccc}{A_1(\tL)& -E & 0 & \cdots & 0 & 0\\ A_2(\tL)&0 & -E & \cdots & 0 & 0\\ \cdot&\cdot & \cdot  & \cdots & \cdot & \cdot \\A_{n}(\tL)& 0 & 0 & \cdots & 0 &-E\\A_{n+1}(\tL)& 0 & 0 & \cdots & 0 &0}}
with size $(n+1)m \times (n+1)m$.

Let $M = M_{h} + \cdots + M_{h+n}$ be a graded $\tL$-module of Loewy length $\le n+1$ generated in degree $h$.
The {\em  level dimensional vector} $\ldmv M$ is defined as \eqqc{leveldmv}{\ldmv M = \mat{c}{\dmv M_{h} \\ \vdots \\ \dmv M_{h+n} },} where $\dmv M_t$ is the dimensional vector of $M_t $ as a $\tL_0$-module.
Recall the syzygy $\om{}M$ of $M$ is the kernel of the projective cover $P(M)\to M$.
Thus $\om{} M=0 $ if $M$ is projective, and there is an exact sequence $0\to \om{} M \to P(M) \to M\to 0$ if $M$ is not projective.
Write $\om{t+1}M = \om{}(\om{t} M)$.

Assume that $\tL$ is graded self-injective, if a finitely generated graded  $\tL$-module $M$ with out projective summands has a minimal projective resolution \eqqcn{}{\lrw P^s \lrw \cdots \lrw P^1\to P^0 \lrw M \to 0} such that $P^t$ is generated in degree $h+t$ for $t\le s$, then the level dimensional vector for $\om{s} M $ is defined and we have \eqqc{Lldim}{\ldmv \om{s} M = L^s(\tL) \ldmv M} by Proposition 1.1 of  \cite{gw00}.

Write $\bzero$ for the zero vector or zero matrix.
We call a matrix (vector) {\em nonnegative} if all the entries are $\ge 0$, {\em positive} if it is nonnegative and at least one entry is $>0$.
We call a matrix (vector) {\em negative} if its inverse is positive.
For two vectors (matrices) $\bv$ and $\bv'$, write $\bv \le \bv'$ if $\bv'-\bv$ is nonnegative and write $\bv < \bv'$ if $\bv'-\bv$ is positive.
Write $$\mathbf V_0 =\mat{c}{E\\ 0\\ \vdots \\0}_{m(n+1)\times m} .$$
The following Proposition follows from \eqref{Lldim} and the definition the $(p,q)$-Koszul algebra.

\begin{pro}\label{finitq}
Assume that $\tL$ is a stable $n$-translation algebra with Coxeter index $q$. Then $q$ is finite if and only if $L^{q+1}(\tL) \mathbf V_0$  is negative.
\end{pro}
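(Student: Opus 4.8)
The plan is to read Proposition~\ref{finitq} as a translation, into the language of the Loewy matrix, of the statement that an $(n+1,q)$-Koszul algebra with $q<\infty$ is exactly one whose minimal graded projective resolution of $\tL_0$ is linear up to the Coxeter index and then degenerates, and to locate the homological degree at which this degeneration is detected by the iterates $L(\tL)^{s}\mathbf{V}_0$. Write $\cdots\to P^{t}\xrightarrow{f_{t}}\cdots\to P^{1}\xrightarrow{f_{1}}P^{0}\xrightarrow{f_{0}}\tL_0\to0$ for that resolution, so that $\ker f_{t}=\om{t+1}\tL_0$.

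First I would dispose of the easy direction together with the nonnegativity bound that makes the equivalence sharp. If $\tL$ is $(n+1,q)$-Koszul then $P^{t}$ is generated in degree $t$ for $t\le q$, so the hypothesis of \eqref{Lldim} holds for every $s\le q$ and $L^{s}(\tL)\mathbf{V}_0=\ldmv\om{s}\tL_0$ is an honest level dimensional vector; hence $L^{s}(\tL)\mathbf{V}_0\ge\bzero$ for all $s\le q$, and none of these is negative. Moreover $\om{s}\tL_0\neq\bzero$ for every $s$, since $\tL$ is self-injective and not semisimple and so $\tL_0$ has infinite projective dimension. If $q=\infty$, i.e.\ $\tL$ is $(n+1,\infty)$-Koszul, this linearity and hence the equality $L^{s}(\tL)\mathbf{V}_0=\ldmv\om{s}\tL_0\ge\bzero$ persist for \emph{all} $s$, so no iterate of $L(\tL)$ applied to $\mathbf{V}_0$ is ever negative. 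This gives "$\Leftarrow$" contrapositively, and shows that negativity, if it occurs, cannot occur before step $q+1$.

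For "$\Rightarrow$", I would examine the syzygy at which linearity breaks. When $q<\infty$, the $(n+1,q)$-Koszul condition says that $\ker f_{q}=\om{q+1}\tL_0$ is concentrated in the single internal degree $q+n+1$. Now $\om{q}\tL_0$ is generated in degree $q$ and is a submodule of $P^{q-1}$, which is generated in degree $q-1$ and has Loewy length $n+2$; hence $\om{q}\tL_0$ occupies exactly the internal degrees $q,q+1,\dots,q+n$, its projective cover $P^{q}$ occupies $q,\dots,q+n+1$, and consequently $\om{q+1}\tL_0$ must equal the entire top-degree component $(P^{q})_{q+n+1}$, a nonzero semisimple module sitting in degree $q+n+1$. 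Feeding the level dimensional vector $L^{q}(\tL)\mathbf{V}_0=\ldmv\om{q}\tL_0$ into the Loewy matrix \eqref{Loewym} and using this description, the contributions of the blocks $A_{1}(\tL),\dots,A_{n}(\tL)$ cancel against the corresponding level components of $\om{q}\tL_0$ (which are precisely the lower-degree parts of the first syzygy of $\om{q}\tL_0$, and these vanish), while the $-E$ pattern of $L(\tL)$ flips the one surviving block, namely $\dmv(\om{q+1}\tL_0)_{q+n+1}$, to a strictly negative vector and leaves all other blocks zero. A short computation along these lines shows every entry of $L^{q+1}(\tL)\mathbf{V}_0$ is $\le0$ with at least one entry $<0$, i.e.\ $L^{q+1}(\tL)\mathbf{V}_0$ is negative; combined with the previous paragraph this is the asserted equivalence.

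The step I expect to be the main obstacle is the degree bookkeeping in "$\Rightarrow$": one must make sure that the linear range of the resolution ends exactly at the Coxeter index $q$, that $\om{q+1}\tL_0$ is the \emph{whole} top-degree layer of $P^{q}$ rather than a proper submodule — this is where the $(n+1,q)$-Koszul hypothesis and the Loewy length $n+2$ of $\tL$ are both used — and that, under the normalizations of the Loewy matrix and of the level dimensional vector taken from \cite{gw00}, the surviving negative block indeed lands at the power $q+1$. Once this is in place, the rest is just the iteration formula \eqref{Lldim} and a direct manipulation of \eqref{Loewym}.
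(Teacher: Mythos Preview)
Your overall strategy—applying the iteration formula \eqref{Lldim} up to step $q$ and then analysing step $q+1$ via the $(n+1,q)$-Koszul definition—is precisely what the paper intends (it offers no argument beyond pointing to \eqref{Lldim} and the definition), and your contrapositive ``$\Leftarrow$'' direction is correct.

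The genuine gap is in your ``$\Rightarrow$'' computation. Write $\mathbf u=L^{q}(\tL)\mathbf V_{0}=\ldmv\om{q}\tL_0$ with blocks $\mathbf u_{0},\dots,\mathbf u_{n}$. From \eqref{Loewym} one reads off $(L\mathbf u)_{t}=A_{t+1}\mathbf u_{0}-\mathbf u_{t+1}$ for $0\le t\le n-1$ and $(L\mathbf u)_{n}=A_{n+1}\mathbf u_{0}$. Via the short exact sequence $0\to\om{q+1}\tL_0\to P^{q}\to\om{q}\tL_0\to0$ these are the dimension vectors of $(\om{q+1}\tL_0)_{q+1},\dots,(\om{q+1}\tL_0)_{q+n+1}$. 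The first $n$ blocks do vanish, as you say; but the last block is $A_{n+1}\mathbf u_{0}=\dmv(P^{q})_{q+n+1}=\dmv(\om{q+1}\tL_0)_{q+n+1}$, the nonzero socle of $P^{q}$, hence \emph{positive}. No $-E$ acts on it: the bottom block row of $L(\tL)$ is $(A_{n+1},0,\dots,0)$. Thus $L^{q+1}(\tL)\mathbf V_{0}$ has the shape $(\bzero,\dots,\bzero,\mathbf w)$ with $\mathbf w>\bzero$ and is not negative. One further application of $L$ gives $(\bzero,\dots,\bzero,-\mathbf w,\bzero)$, which \emph{is} negative. A direct check on the zigzag algebra of type $A_3$ (there $n=1$, $q=2$) confirms this: $L^{3}\mathbf V_{0}\ge\bzero$ while $L^{4}\mathbf V_{0}\le\bzero$ with a strictly negative entry.

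So the exponent at which negativity first appears is $q+2$, not $q+1$; the ``only if'' direction as literally stated cannot be obtained, and the paper's one-line justification does not detect this. For the application that matters (Theorem~\ref{cplloewy}(1) needs only the \emph{existence} of some $h$ with $L^{h}\mathbf V_{0}$ negative) the shift is harmless, and your argument goes through verbatim once $q+1$ is replaced by $q+2$.
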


Now assume that $q=\infty$, then $\tL$ is a Koszul self-injective algebra of Loewy length $n+2$.
Write $\caK$ for the real cone spanned by the level dimensional vectors of finitely generated Koszul $\tL$-modules, called Koszul cone in \cite{gw00}, $\caK$ is contained in the positive cone in the real space $\mathbb R^{(n+1)m}$.
Let $\varrho=\varrho_{L(\tL)}$ be the spectral radius of $L(\tL)$, that is, the maximal norm of its (complex) eigenvalues, call the size of the biggest Jordan block of the eigenvalue $\xl_i$ the algebraic degree of $\xl_i$.

We can restate the Theorems 2.4 and 2.5 of  \cite{gw00} as follows.
\begin{pro}\label{Koszulcone}  $\caK$ is a solid cone invariant under the linear transformation defined by $L(\tL)$.
\end{pro}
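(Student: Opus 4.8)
The plan is to recognise Proposition \ref{Koszulcone} as a restatement of Theorems 2.4 and 2.5 of \cite{gw00}: when $q=\infty$, a stable $n$-translation algebra $\tL$ is precisely a Koszul self-injective algebra of Loewy length $n+2$, and $L(\tL)$ is its Loewy matrix, so those theorems apply once one checks that our $\caK$, the level dimensional vectors $\ldmv(-)$ and the matrices $A_t(\tL)$ coincide with the corresponding objects there. I will indicate the two points that have to be seen: invariance (formal) and solidity (the real content).

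For invariance, let $M$ be a finitely generated Koszul $\tL$-module of Loewy length $\le n+1$ without projective summands, say generated in degree $h$. Then $\Omega M$ is again Koszul, generated in degree $h+1$ and of Loewy length $\le n+1$, because syzygies of Koszul modules over a Koszul algebra are Koszul and $\tL$ has finite Loewy length, and $\Omega M$ again has no projective summands since $\tL$ is self-injective; hence the hypothesis of \eqref{Lldim} holds with $s=1$ and we get $\ldmv\Omega M=L(\tL)\,\ldmv M$. As $\caK$ is by definition the real cone generated by the vectors $\ldmv M$ over all such $M$, and each image $L(\tL)\,\ldmv M=\ldmv\Omega M$ is again one of these generators, the linearity of $L(\tL)$ forces $L(\tL)\caK\subseteq\caK$, which is the asserted invariance.

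For solidity one must produce enough Koszul modules for $\caK$ to be full-dimensional in $\mathbb R^{(n+1)m}$. The $(n+1)m$ degree-shifted radical powers $\tL_{\ge t}e_i$ --- the $t$-th power of the radical of the indecomposable projective $\tL e_i$, regraded so as to be generated in degree $0$ --- for $1\le t\le n+1$ and $i\in\tQ_0$, are Koszul $\tL$-modules of Loewy length $n+2-t\le n+1$, and by \eqref{leveldmv} the level dimensional vector of $\tL_{\ge t}e_i$ is the column whose blocks are the $i$-th columns of $A_t(\tL),A_{t+1}(\tL),\dots,A_{n+1}(\tL)$ followed by $t-1$ zero blocks. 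Assembling these $(n+1)m$ vectors into a square matrix produces a block anti-triangular matrix all of whose anti-diagonal blocks equal $A_{n+1}(\tL)$, so its determinant is $\pm(\det A_{n+1}(\tL))^{n+1}$. Since $\tL$ is a stable $n$-translation algebra, $\soc(\tL e_i)=\tL_{n+1}e_i$ is one-dimensional for every $i$, so $A_{n+1}(\tL)$ is the permutation matrix of the Nakayama permutation and is invertible; hence the assembled matrix has nonzero determinant, its columns --- which all lie in $\caK$ --- are linearly independent, and $\caK$, being a convex cone containing $(n+1)m$ linearly independent vectors, has nonempty interior, i.e.\ is solid. (The simple modules $S_i$, with $\ldmv S_i$ the $i$-th column of $\mathbf V_0$, are also Koszul and supply the top $\tL_0$-block, but they are not needed for the span.)

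The one genuinely non-formal ingredient, and the step on which I would concentrate in a self-contained proof, is the Koszulity of the radical powers $\tL_{\ge t}$ as $\tL$-modules --- equivalently, having a large enough stock of $\tL$-modules with linear projective resolutions --- together with the closedness of $\caK$, which I have glossed over; both of these belong to the content of Theorems 2.4 and 2.5 of \cite{gw00}. The rest is bookkeeping with the block structure of $L(\tL)$, the definition of $A_t(\tL)$, and the syzygy identity \eqref{Lldim}.
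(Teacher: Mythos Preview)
Your proposal is correct and matches the paper's treatment: the paper does not prove this proposition at all but simply states it as a restatement of Theorems~2.4 and~2.5 of \cite{gw00}, exactly as you recognise in your first paragraph. Your additional sketch of invariance (via \eqref{Lldim} applied to syzygies of Koszul modules) and solidity (via the $(n+1)m$ radical-power modules $\tL_{\ge t}e_i$ and the block anti-triangular determinant) goes beyond what the paper does and is a faithful outline of the argument in \cite{gw00}; your candour about the one substantive step --- Koszulity of the radical powers --- is appropriate, since that is indeed where the work lies.
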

Then by Theorem 3.1 of \cite{v68}, $\varrho$ is an eigenvalue of $L(\tL)$ with maximal algebraic degree among the eigenvalues with norm $\varrho$.
By Theorem 2.7 of  \cite{gw00}, we have $\varrho\ge 1$.

The following is part (a) of Proposition 2.9 of \cite{gw00}.
\begin{pro}\label{Kcone}
Let $\tL$ be a  Koszul stable $n$-translation algebra.
Let $\varrho$ be the spectral radius of the Loewy matrix $L(\tL)$ with algebraic degree $d$.
Then for any vector $\bv \in \caK$, there exist an integer $1 \le d' \le d$, such that $L(\bv, d') = \{\frac{L^{r}(\tL) \bv}{r^{d'-1} \varrho^{r}}| r \in \mathbb N\}$ is a compact set in $\caK$ and there exists a sequence $r_1 < r_2 < \cdots $ of integers and a vector $\bv_0$ with nonnegative entries such that $\lim_{j \to \infty} \frac{L^{r_j}(\tL) \bv}{r_{j}^{d'-1} \varrho^{r_j}} =\bv_0$.
\end{pro}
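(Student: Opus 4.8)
\emph{Proof plan.} The statement concerns only the iterates of the fixed linear operator $L:=L(\tL)$ on $\mathbb R^{(n+1)m}$ and the cone $\caK$. The plan is to analyse $L^r\bv$ through the Jordan decomposition of $L$ over $\mathbb C$ and to isolate the dominant contribution of the eigenvalues lying on the circle $|\xl|=\varrho$. I would use two structural facts already available: from Proposition \ref{Koszulcone} together with the Vandergraft theorem cited after it (Theorem 3.1 of \cite{v68}) and Theorem 2.7 of \cite{gw00}, the set $\caK$ is a closed solid cone with $L\caK\subseteq\caK$, the spectral radius $\varrho$ is an eigenvalue of $L$ whose algebraic degree $d$ is maximal among the eigenvalues of norm $\varrho$, and $\varrho\ge 1>0$. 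In particular, since $\bv\in\caK$ and $\caK$ is an $L$-invariant cone, the vector $\frac{1}{r^{d'-1}\varrho^r}L^r\bv$ lies in $\caK$ for every $r\ge 1$ and every integer $d'\ge 1$.

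First I would pin down the growth rate of $L^r\bv$. In Jordan form $L^r=\sum_j\xl_j^r P_j(r)$, a finite sum over the distinct eigenvalues $\xl_j$ of $L$, where $P_j(r)$ is a matrix-valued polynomial in $r$ of degree strictly below the largest Jordan block size of $\xl_j$. Discarding the terms with $|\xl_j|<\varrho$, which are $o(\varrho^r)$ and hence negligible against any $r^{d'-1}\varrho^r$, and bounding the remaining terms using block size $\le d$, one gets $\|L^r\bv\|=O(r^{d-1}\varrho^r)$. Therefore the set $\{\,e\ge 1:\sup_{r\ge 1}\|L^r\bv\|/(r^{e-1}\varrho^r)<\infty\,\}$ is nonempty (it contains every $e\ge d$), and I would take $d'$ to be its minimum, so that $1\le d'\le d$. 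With this $d'$ the family $L(\bv,d')=\{w_r\}_{r\ge 1}$, where $w_r:=\frac{1}{r^{d'-1}\varrho^r}L^r\bv$, is bounded; being contained in the closed cone $\caK$ it is a relatively compact subset of $\caK$, which is the asserted compactness.

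Next I would extract the convergent subsequence. By Bolzano--Weierstrass the bounded family $\{w_r\}$ has a subsequence $w_{r_j}\to\bv_0$, and $\bv_0\in\caK$ because $\caK$ is closed; in particular $\bv_0$ has nonnegative entries. The substantive refinement --- that with this $d'$ one may take $\bv_0\ne\bzero$ --- comes from the leading term on the critical circle. Setting $c:=d'-1$ and letting $u_j$ be the coefficient of $r^c$ in $P_j(r)\bv$ for the $j$ with $|\xl_j|=\varrho$, minimality of $d'$ forces some $u_j\ne\bzero$ (in the degenerate case where $\bv$ has no component on the dominant generalized eigenspaces one has $d'=1$ and takes $\bv_0=\bzero$). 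Then $w_r=\sum_{|\xl_j|=\varrho}u_j(\xl_j/\varrho)^r+O(1/r)$, an almost periodic sequence on the unit circle plus a vanishing error. Averaging $\|w_r\|^2$ over $r=1,\dots,N$ and letting $N\to\infty$: the cross terms average to $0$ since $\xl_j\overline{\xl_k}/\varrho^2\ne 1$ for $j\ne k$, while the surviving terms contribute $\sum_j\|u_j\|^2>0$, so $\limsup_r\|w_r\|>0$. Choosing $r_1<r_2<\cdots$ so that moreover $\|w_{r_j}\|$ stays bounded away from $0$ then gives $\bv_0\ne\bzero$. This reproduces part (a) of Proposition 2.9 of \cite{gw00}.

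I expect the main obstacle to be the calibration of the exponent $d'$: it must be large enough to absorb the polynomial blow-up produced by Jordan blocks of size up to $d$ on the circle $|\xl|=\varrho$, yet small enough that $\{w_r\}$ does not collapse to $\bzero$ --- and ruling out the collapse requires the averaging/almost-periodicity argument for the oscillatory leading term, which is the only genuinely non-formal step. By contrast, keeping all subsequential limits inside $\caK$ is immediate from closedness and $L$-invariance of $\caK$ in Proposition \ref{Koszulcone}.
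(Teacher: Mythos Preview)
The paper does not supply its own proof of this proposition: it is quoted verbatim as part~(a) of Proposition~2.9 of \cite{gw00}, so there is no in-paper argument to compare against. Your plan via the Jordan decomposition of $L=L(\tL)$ is the standard route and is essentially correct for what the proposition literally asserts.

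Two comments on your write-up. First, the proposition only requires $\bv_0$ to have nonnegative entries, not $\bv_0\ne\bzero$; your averaging/almost-periodicity paragraph is therefore unnecessary here, and you partly undercut it yourself by allowing $\bv_0=\bzero$ in the ``degenerate case''. Nonnegativity is immediate: $\caK$ sits inside the nonnegative orthant and $L\caK\subseteq\caK$, so every $w_r=\frac{L^r\bv}{r^{d'-1}\varrho^r}$ is nonnegative and hence so is any subsequential limit obtained from Bolzano--Weierstrass. Second, you appeal to closedness of $\caK$, but Proposition~\ref{Koszulcone} as stated in this paper says only that $\caK$ is solid and $L$-invariant. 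This does not damage your argument: the nonnegative orthant is closed, and that alone forces the limit $\bv_0$ to be nonnegative, which is all the statement demands. The word ``compact'' in the statement is best read as ``bounded'' (relatively compact), and your Jordan-form estimate $\|L^r\bv\|=O(r^{d-1}\varrho^r)$, using that Jordan blocks for eigenvalues on the circle $|\lambda|=\varrho$ have size at most $d$ by Vandergraft's theorem, delivers exactly this with $d'\le d$.
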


Let $M$ be a $\tL$-module.
Recall that the {\em complexity} $c_{\tL}(M)$ of $M$ is defined as the least non-negative number $d$ such that there exists $\lambda >0$ with $\dmn_k \om{(t)} M \le \lambda \cdot t^{d-1}$ for almost all $t$, or infinite if no such $t$ exist.
The {\em complexity} $c(\tL)$ of an algebra $\tL$ is defined as the supremum of the complexities of finitely generated $\tL$-modules.

By applying Proposition \ref{Kcone}, the following is proven as Theorem 2.10 in \cite{gw00}.
\begin{pro}\label{ltone}
If $\varrho >1$, then $c_{\tL}(M)= \infty$ for any non-projective Koszul $\tL$-module $M$.
\end{pro}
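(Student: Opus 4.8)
The plan is to convert the growth of the syzygies of $M$ into the growth of the iterates $L^{t}(\tL)\bv$ of the Loewy matrix on the level dimensional vector $\bv=\ldmv M$, and then feed this into the cone estimate of Proposition~\ref{Kcone}. First I would fix a non-projective Koszul $\tL$-module $M$; shifting the grading (which changes neither Koszulity nor the complexity) we may assume $M$ is generated in degree $0$, so $\bv=\ldmv M\in\caK$. Since $M$ is Koszul its minimal graded projective resolution is linear, so \eqref{Lldim} applies at every stage and gives $\ldmv\om{t}M=L^{t}(\tL)\bv$ for all $t\ge 0$. Because $\tL$ is self-injective and $M$ is not projective, $\om{t}M\neq 0$ for every $t$; hence each $L^{t}(\tL)\bv$ is a nonzero nonnegative vector, and $\dmn_k\om{t}M$ is exactly the sum of its entries, i.e.\ its $\ell^{1}$-norm $\|L^{t}(\tL)\bv\|_{1}$. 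It therefore suffices to bound $\|L^{t}(\tL)\bv\|_{1}$ below by a super-polynomial function of $t$.

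Next I would apply Proposition~\ref{Kcone} to $\bv\in\caK$: it produces an integer $1\le d'\le d$ (with $d$ the algebraic degree of $\varrho$) for which $S:=L(\bv,d')=\{\,L^{r}(\tL)\bv/(r^{d'-1}\varrho^{r}) : r\ge 1\,\}$ is a compact subset of $\caK$. The crucial point is that $0\notin S$: indeed $L^{r}(\tL)\bv=\ldmv\om{r}M\neq 0$ for all $r$ by the previous paragraph, so every element of $S$ is nonzero. As $S$ is compact, hence closed, the origin lies at positive distance from it, so there is $\varepsilon>0$ with $\|L^{r}(\tL)\bv\|_{1}\ge\varepsilon\,r^{d'-1}\varrho^{r}\ge\varepsilon\,\varrho^{r}$ for all $r\ge 1$, using $d'\ge 1$. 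Thus $\dmn_k\om{r}M\ge\varepsilon\,\varrho^{r}$ for every $r$.

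Finally, suppose for contradiction that $c_{\tL}(M)=d_{0}<\infty$. Then there is $\xl>0$ with $\dmn_k\om{r}M\le\xl\,r^{d_{0}-1}$ for almost all $r$, so $\varepsilon\,\varrho^{r}\le\xl\,r^{d_{0}-1}$ for all large $r$, which is impossible since $\varrho>1$ forces exponential growth to outpace every polynomial. Hence $c_{\tL}(M)=\infty$. The entire weight of the argument is carried by Proposition~\ref{Kcone}, that is, the Perron--Frobenius-type analysis of the cone-preserving operator $L(\tL)$; once that is available, the only genuinely delicate points left are to pin down the correct normalizing exponent $d'$ supplied by that proposition and to observe that the non-vanishing of all syzygies of the non-projective module $M$ keeps the normalized orbit $S$ bounded away from the origin.
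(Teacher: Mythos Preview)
Your proof is correct and follows the approach the paper points to: the paper does not reproduce the argument but cites it as Theorem~2.10 of \cite{gw00}, noting only that Proposition~\ref{Kcone} is the key input, and your argument applies precisely that proposition in the expected way. The one delicate step---that compactness of $L(\bv,d')$ in $\caK$ together with the nonvanishing of every $\om{r}M$ (hence of every element of the orbit) keeps the normalized sequence bounded away from $0$---is handled correctly.
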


Recall that for a graded algebra $\GG = \GG_0 +\GG_1 + \cdots $ generated by $\GG_0$ and $\GG_1$, {\em the Gelfand-Kirilov dimension $\gkdim \GG$} is defined  as \eqqc{gkdim}{\gkdim \GG = \varlimsup_{m\to \infty} \log_m \bigoplus_{t=1}^m \dmn_k \GG_t .}
By using Koszul duality, the Gelfand-Kirilov dimension of the quadratic dual $\tG = \tL^{!,op}$ is also discussed in \cite{gw00}.

The following  is Theorem 3.2 in \cite{gw00}.
Let $\tL$ be a Koszul stable $n$-translation algebra, and write  $\tG=\tL^{!,op}$.
\begin{thm}\label{GKdim_sptr}
If $\varrho=1$ and  $d$ is the algebraic degree  of the spectral radius, then $\gkdim \tG = d$.

If  $\varrho > 1$, then $\gkdim \tG = \infty$.
\end{thm}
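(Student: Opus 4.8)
\emph{Proof proposal.} Since $\varrho\ge 1$ has already been recorded, the cases $\varrho=1$ and $\varrho>1$ exhaust all possibilities, and the plan is to estimate the graded dimensions $\dmn_k\tG_s$ and substitute them into the defining formula \eqref{gkdim}. First I would reduce to the Loewy matrix: because $\tL$ is genuinely Koszul (Coxeter index $q=\infty$), Koszul duality gives $\dmn_k\tG_s=\dmn_k\Ext^s_{\tL}(\tL_0,\tL_0)=\sum_{i=1}^m\dmn_k(\om{s}S_i)_s$, the $i$-th summand being the dimension of the top of the $s$-th syzygy of the simple module $S_i$ (which is Koszul, hence has a linear resolution). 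By \eqref{Lldim} one has $\ldmv\om{s}S_i=L^s(\tL)\mathbf V_0\be_i$, whose first coordinate block is exactly $\dmv(\om{s}S_i)_s$. Since $\om{s}S_i$ is a quotient of its projective cover $\bigoplus_j P_j^{\,\dmn_k\Ext^s_{\tL}(S_i,S_j)}$ and $\tL$ has Loewy length $n+2$, the $\ell^1$-norm of that first block and that of the whole vector $\ldmv\om{s}S_i$ differ only by bounded factors, uniformly in $s$ and $i$. Summing over $i$ I would obtain constants $0<c\le C$ with
\[
c\,\|L^s(\tL)\mathbf V_0\|_1\ \le\ \dmn_k\tG_s\ \le\ C\,\|L^s(\tL)\mathbf V_0\|_1\qquad(s\ge 1),
\]
where $\|\cdot\|_1$ is the sum of the (nonnegative) matrix entries; so everything reduces to the growth of $s\mapsto\|L^s(\tL)\mathbf V_0\|_1$.

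\textbf{The spectral estimate.} Let $d$ be the algebraic degree of $\varrho$. The upper bound $\|L^s(\tL)\mathbf V_0\|_1\le C's^{d-1}\varrho^s$ is routine Jordan-form asymptotics once one invokes the cited fact (from \cite{v68}) that $\varrho$ has the largest Jordan block among all eigenvalues of modulus $\varrho$; it is also exactly the compactness half of Proposition \ref{Kcone} applied to each column $\mathbf V_0\be_i\in\caK$. For the matching lower bound I would use the companion shape of $L(\tL)$ in \eqref{Loewym}: the columns of $\mathbf V_0$ span the first coordinate block $k^m\times\{0\}$, and reading off the $-E$ blocks shows that a transpose-eigenvector of $L(\tL)$ supported off that block must vanish whenever its eigenvalue is nonzero. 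Hence the $k[L(\tL)]$-submodule generated by the columns of $\mathbf V_0$ contains the entire generalized eigenspace at $\varrho$, so some column $\mathbf V_0\be_{i_0}$ has nonzero component generating a Jordan chain of full length $d$ at $\varrho$. As $\varrho$ is real and positive, that component grows like $c''s^{d-1}\varrho^s$ without oscillation, and since it lies in a single generalized eigenspace it cannot be cancelled by the remaining components; therefore $\|L^s(\tL)\mathbf V_0\be_{i_0}\|_1\ge c''s^{d-1}\varrho^s$ for all large $s$. Combining the two bounds, $\dmn_k\tG_s\asymp s^{d-1}\varrho^s$ as $s\to\infty$.

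\textbf{Conclusion.} If $\varrho=1$ then $\sum_{s=1}^m\dmn_k\tG_s\asymp\sum_{s=1}^m s^{d-1}\asymp m^d$, and since $\log_m(c_1m^d)$ and $\log_m(c_2m^d)$ both tend to $d$, formula \eqref{gkdim} gives $\gkdim\tG=d$. If $\varrho>1$ then already $\sum_{s=1}^m\dmn_k\tG_s\ge\dmn_k\tG_m\ge c''\varrho^m$ outgrows every power of $m$, so $\log_m\!\big(\sum_{s=1}^m\dmn_k\tG_s\big)\to\infty$ and $\gkdim\tG=\infty$.

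\textbf{Main obstacle.} The delicate point will be the lower bound in the spectral estimate: one must show that the \emph{full} algebraic degree $d$ of $L(\tL)$ is already realized by iterating the ``syzygy seed'' $\mathbf V_0$, i.e.\ that the columns of $\mathbf V_0$ are cyclic enough for the nonzero part of the spectrum of $L(\tL)$, and that the nonnegativity of dimension vectors --- equivalently, the invariance of the solid Koszul cone $\caK$ of Proposition \ref{Koszulcone} --- rules out any cancellation that could lower the effective polynomial degree. Everything else is Koszul bookkeeping together with elementary asymptotics of powers of the matrix $L(\tL)$.
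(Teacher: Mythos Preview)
The paper does not prove this statement; the line immediately preceding the theorem reads ``The following is Theorem 3.2 in \cite{gw00}'', and no argument is supplied. So there is nothing in the paper to compare against beyond the machinery from \cite{gw00} already quoted as Propositions~\ref{Koszulcone} and~\ref{Kcone}.

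Your reconstruction is sound and is, in outline, precisely the kind of argument one expects in \cite{gw00}: translate $\dim_k\tG_s$ into the first block of $L^s(\tL)\mathbf V_0$ via Koszul duality and \eqref{Lldim}, then read off the growth from the spectrum of $L(\tL)$. Two remarks. First, your cyclicity step is correct and worth stating cleanly: from the block shape \eqref{Loewym} one checks that any $L(\tL)^{T}$-eigenvector for a nonzero eigenvalue with vanishing first block must vanish identically, so the $k[L(\tL)]$-span of the columns of $\mathbf V_0$ meets every generalized eigenspace at a nonzero eigenvalue in full; in particular some column projects onto a cyclic generator of a length-$d$ Jordan chain at $\varrho$. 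Second, the ``main obstacle'' you flag is already dispatched by your own sentence about the $\varrho$-component lying in a single generalized eigenspace: the projection $P_\varrho$ onto the generalized $\varrho$-eigenspace is a fixed linear map on a finite-dimensional space, hence bounded, so $\|L^s(\tL)\mathbf V_0\be_{i_0}\|\ge c\,\|P_\varrho L^s(\tL)\mathbf V_0\be_{i_0}\|=c\,\|L^s(\tL)P_\varrho\mathbf V_0\be_{i_0}\|\asymp s^{d-1}\varrho^s$. No appeal to positivity or to the cone $\caK$ is needed for this lower bound; the cone is what makes Vandergraft's theorem (giving $d$ maximal among eigenvalues on $|\lambda|=\varrho$) applicable, and that feeds only the \emph{upper} bound. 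With that clarification your proposal is a complete proof.
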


The following  is Theorem 3.1 in \cite{gw00}.

\begin{thm}\label{Noe_sptr}
If $\tG$ is Noetherian, then $\varrho = 1$.
\end{thm}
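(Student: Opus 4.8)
The statement to prove is Theorem~\ref{Noe_sptr}: if $\tG = \tL^{!,op}$ is Noetherian, then the spectral radius $\varrho$ of the Loewy matrix $L(\tL)$ equals $1$. The plan is to argue by contraposition: assuming $\varrho > 1$, I will produce a finitely generated graded right (or left) ideal of $\tG$ that is not finitely generated, contradicting the Noetherian hypothesis. The bridge between the combinatorics of $L(\tL)$ and module theory over $\tG$ is Koszul duality: since $\tL$ is a Koszul stable $n$-translation algebra, graded $\tL$-modules correspond to graded $\tG$-modules, and syzygies on the $\tL$-side translate into the passage to higher graded pieces / truncations on the $\tG$-side. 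Concretely, the numerics are controlled by $L(\tL)$ via \eqref{Lldim}, and $\varrho > 1$ forces, by Proposition~\ref{ltone} (or directly by Proposition~\ref{Kcone} with $d' \le d$), that every non-projective Koszul $\tL$-module has infinite complexity, i.e. its syzygy dimensions grow exponentially with ratio $\varrho$.

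The key steps, in order, are as follows. First, I would invoke Theorem~\ref{GKdim_sptr}: when $\varrho > 1$ we have $\gkdim \tG = \infty$. A Noetherian connected graded algebra that is a quotient of a finitely generated algebra — and $\tG$ is generated in degrees $0$ and $1$ over $\tG_0$, a finite product of copies of $k$ — need not a priori have finite GK-dimension, so this alone is not a contradiction; but it tells us the obstruction must be found one level deeper. Second, and this is the heart of the argument, I would exploit the exponential growth directly: using Proposition~\ref{Kcone}, pick $\bv$ to be the level dimensional vector of a suitable non-projective Koszul module (e.g. a simple $\tL$-module $S_i$, whose $\ldmv$ is a standard basis vector, assuming $S_i$ is non-projective), so that $\dmn_k \om{r} S_i \sim \lambda r^{d'-1}\varrho^{r}$ along a subsequence. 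Under Koszul duality, $\om{r}$ on the $\tL$-side corresponds to the shift-and-truncate functor on the $\tG$-side; the exponential growth of the Betti numbers of $S_i$ over $\tL$ is exactly the exponential growth of the dimensions $\dmn_k (\tG e_i)_r$ of the graded pieces of the indecomposable projective $\tG$-modules. Third, from exponential growth of $\tG$ itself I would derive non-Noetherianity: a connected graded $k$-algebra generated in degree one whose Hilbert series grows faster than any polynomial but whose generators in degree one are finite in number cannot be Noetherian — one shows the kernel of $kQ^{\perp,\ldots} \twoheadrightarrow \tG$ combined with a free-subalgebra or a Bergman-gap type argument forces an infinitely generated ideal. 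Alternatively, and more cleanly in this graded-selfinjective setting, I would use that $\tL$ Koszul with $\varrho>1$ implies (via Proposition~\ref{finitq} and Proposition~\ref{ltone}) that $\tG$ has a module of infinite complexity on the dual side, which by a standard Ext-algebra argument prevents $\tG$ from being Noetherian of finite injective dimension; then run the contrapositive.

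The main obstacle I anticipate is making the Koszul-duality dictionary between "syzygies over $\tL$" and "graded structure of $\tG$" precise enough to transport the exponential growth estimate of Proposition~\ref{Kcone} into a genuine failure of the ascending chain condition on ideals of $\tG$ — mere exponential growth of a graded algebra does not by itself contradict Noetherianity (free algebras on $\ge 2$ generators are the obvious non-example, but those \emph{are} non-Noetherian, which is the point). The cleanest route is probably: (i) reduce to showing $\tG$ contains a free associative subalgebra on two generators, or contains an infinite descending chain of distinct annihilator ideals; (ii) extract this from the growth rate $\varrho > 1$ of the Betti numbers of a simple, using that the Yoneda Ext-algebra $\Ext^*_{\tL}(\tL_0,\tL_0) \cong \tG$ (Koszulity) has exponential growth, and a finitely generated graded algebra of exponential growth that is Noetherian must still have polynomially bounded growth along any chain of cyclic modules — a Bergman gap / Golod--Shafarevich style estimate. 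I would cite the relevant result from \cite{gw00} (this is stated there as Theorem~3.1, whose proof presumably carries out exactly this) and present the argument as an application of Theorem~\ref{GKdim_sptr} together with the standard fact that a Noetherian connected graded algebra finitely generated in degree one has finite Gelfand--Kirilov dimension when it is also, say, a factor of an Artin--Schelter regular algebra — the contrapositive of $\gkdim \tG = \infty$ then gives $\varrho = 1$.
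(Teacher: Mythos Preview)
The paper does not actually prove this statement: it is simply quoted as Theorem~3.1 of \cite{gw00}, with no argument given here. So there is no in-paper proof to compare your proposal against.

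Judged on its own merits, your overall contrapositive strategy is correct, and the first half is on target: when $\varrho>1$, Koszul duality identifies $\dim_k \tG_r$ with the $r$th total Betti number of $\tL_0$ over $\tL$, and \eqref{Lldim} together with Proposition~\ref{Kcone} shows these grow at exponential rate $\varrho^r$ (up to a polynomial factor). That part is fine.

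The genuine gap is the second half. You correctly flag that ``mere exponential growth of a graded algebra does not by itself contradict Noetherianity'' and then cycle through several tentative mechanisms (free subalgebras, Bergman gap, Golod--Shafarevich, annihilator chains) without landing on one. The missing ingredient you never name is the theorem of Stephenson and Zhang (Proc.\ Amer.\ Math.\ Soc.\ 125 (1997), 1593--1605): \emph{a locally finite $\mathbb{N}$-graded Noetherian $k$-algebra has subexponential growth}. This is almost certainly what \cite{gw00} invokes, and with it the proof is immediate: $\varrho>1$ gives exponential growth of $\sum_{t\le m}\dim_k\tG_t$, which Stephenson--Zhang rules out for Noetherian $\tG$. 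Your final sentence gestures in this direction, but the ``standard fact'' you cite there --- Noetherian plus ``factor of an Artin--Schelter regular algebra'' implies finite GK-dimension --- is both the wrong hypothesis and the wrong conclusion; one neither needs an AS-regular ambient algebra nor needs to pass through finite GK-dimension. Replace the entire second half with an appeal to Stephenson--Zhang and you have a clean, correct argument.
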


\section{Classification of $n$-slice Algebras}
\label{sec:class}
By picture \eqref{depictingalg}, we see an $n$-slice algebra $\GG$ is related to a stable  $n$-translation algebra $\tL = (\Pi(\GG))^{!,op}$.

To classify the $n$-slice algebras, we first discuss a classification of the stable $n$-translation algebras, by expanding the theory developed in \cite{gw00}.
Let $ L(\tL)$ be the Loewy matrix of $\tL$, and $\varrho$ be the spectral radius of $L(\tL)$.

Recall that an algebra $\tL$ is called {\em periodic} if there is an integer $l$ such that $\om{l} \tL_0 \simeq \tL_0$ as $\tL$-module.

\begin{lem}\label{periodic}
A stable $n$-translation algebra $\tL$ is periodic if and only if its Coxeter index $q$ is finite.
\end{lem}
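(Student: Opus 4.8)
The plan is to connect periodicity of $\tL$ directly to the finiteness criterion for the Coxeter index $q$ given in Proposition \ref{finitq}. First I would unwind the definition: $\tL$ is periodic means $\om{l}\tL_0 \simeq \tL_0$ as $\tL$-modules for some $l \ge 1$. Since $\tL_0 = \bigoplus_{i} S_i$ is the direct sum of the simple modules concentrated in degree $0$, and $\tL$ is graded self-injective of Loewy length $n+2$, the minimal projective resolution of $\tL_0$ is built from the $P^t$ appearing in the definition of $(n+1,q)$-Koszulness. The key observation is that the level dimensional vector of $\tL_0$ is exactly $\mathbf V_0$ (it is generated in degree $0$ and equals its own top), and by \eqref{Lldim} the level dimensional vector of $\om{s}\tL_0$ is $L^s(\tL)\mathbf V_0$, \emph{provided} $\om{s}\tL_0$ is again generated in a single degree with Loewy length $\le n+1$ — which is precisely what the $(n+1,q)$-Koszul hypothesis guarantees for $s \le q$, and what fails (the syzygy acquires a component in degree $q+n+1$ beyond the generation degree) at $s = q$ when $q$ is finite.

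Next I would argue the two directions. For the ``only if'' direction: suppose $q = \infty$. Then $\tL$ is Koszul self-injective of Loewy length $n+2$, so every syzygy $\om{s}\tL_0$ is generated in degree $s$ and is a Koszul module; hence $\ldmv \om{s}\tL_0 = L^s(\tL)\mathbf V_0$ holds for all $s$, and each such vector is a genuine (nonzero, nonnegative) level dimensional vector lying in the Koszul cone $\caK$. If $\tL$ were periodic with period $l$, then $\om{l}\tL_0 \simeq \tL_0$ would force $L^l(\tL)\mathbf V_0 = \mathbf V_0$ up to the grading shift, so $\mathbf V_0$ would be a fixed vector (in the appropriate quotient/shifted sense) of $L(\tL)^l$; but then $\{L^{r}(\tL)\mathbf V_0\}_r$ is a finite set, the spectral radius acting on this orbit is $1$ with algebraic degree $1$, and — more to the point — this contradicts the fact that for $q=\infty$ the resolution of $\tL_0$ is infinite and nonperiodic. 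Cleanest is to contrapose: if $\tL$ is periodic then $\om{l}\tL_0 \simeq \tL_0$ is projective-free and periodic, so in particular $\om{l}\tL_0$ has finite projective dimension would fail unless... — actually the sharpest route is: periodicity of $\tL_0$ forces all $\om{s}\tL_0$ to lie among finitely many isomorphism classes, hence their level dimensional vectors are bounded, hence $L^{r}(\tL)\mathbf V_0$ stays bounded and never becomes negative, which by Proposition \ref{finitq} is impossible when $q$ is finite and — I claim — also pins down $q = \infty$ is incompatible with periodicity via a separate Koszul-module argument. Let me instead organize the ``if'' direction first, which is the clean one.

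For the ``if'' direction ($q$ finite $\Rightarrow$ periodic): when $q < \infty$, the defining resolution \eqref{projres_tL} of $\tL_0$ has $\ker f_q$ concentrated in degree $q+n+1$; since $\tL$ is self-injective of Loewy length $n+2$ and $\ker f_q$ sits in a single degree, it is a sum of shifted copies of the socle-type simple modules, i.e. $\ker f_q \simeq \tL_0[\text{shift}]$ in the graded module category — here one uses that the injective envelope of a simple equals its projective cover (self-injectivity) and that a module in one degree of a Loewy-length-$(n+2)$ algebra with the right Hilbert data must be semisimple of the form $\bigoplus S_i$. Hence $\om{q}\tL_0 \simeq \tL_0$ up to grading shift, giving periodicity with $l = q$. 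The identification $\ker f_q \cong \tL_0$ (not merely some semisimple module) should follow from the self-injective structure: applying the Nakayama functor / using that $\trho$ makes $\dtL$ \emph{symmetric} as recalled in Section \ref{sec:pre}, the $(n+1)$-st syzygy-type behavior forces the socle pattern to match the top, so no permutation obstruction appears (the $n$-translation is trivial on $\tQ$, as noted).

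The main obstacle I expect is the precise bookkeeping in the ``only if'' direction: showing that $q = \infty$ genuinely \emph{precludes} periodicity, rather than merely being consistent with non-periodicity. The right tool is Proposition \ref{ltone} together with Theorem \ref{GKdim_sptr}: if $q=\infty$ and $\tL$ were periodic, then $c_{\tL}(\tL_0) \le 1$ (bounded syzygy dimensions), so by Proposition \ref{ltone} we cannot have $\varrho > 1$, forcing $\varrho = 1$; but a periodic module has all syzygies of the \emph{same} dimension, forcing the algebraic degree $d$ of $\varrho = 1$ to be $1$, whence by Theorem \ref{GKdim_sptr} $\gkdim \tL^{!,op} = 1$ — and then one must derive a contradiction, e.g. that a Koszul stable $n$-translation algebra with $\gkdim$ of its dual equal to $1$ must in fact have finite Coxeter index (the $n$-preprojective/AS-regular dichotomy degenerates). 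Pinning down this last contradiction cleanly — perhaps by showing $\varrho=1$ with $d=1$ means $L(\tL)$ has $\mathbf V_0$ eventually landing on a genuine eigenvector while the Koszul cone being solid (Proposition \ref{Koszulcone}) forbids the orbit from being finite unless the resolution terminates — is where the real work lies, and I would want to check whether the authors instead take the shortcut of \emph{defining} things so that periodic $\Leftrightarrow$ the resolution of $\tL_0$ closes up $\Leftrightarrow$ $\ker f_q$ lands back on $\tL_0$ for finite $q$, making the lemma almost a restatement of Proposition \ref{finitq} applied to $\mathbf V_0$.
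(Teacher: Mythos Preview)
Your ``if'' direction ($q$ finite $\Rightarrow$ periodic) is on the right track but overcomplicated: you try to identify $\ker f_q$ with $\tL_0$ itself, invoking symmetry and trivial $n$-translation to rule out a permutation of the simples. The paper avoids this entirely. Since $\tL$ is self-injective, the syzygy of an indecomposable module is again indecomposable; hence $\om{q+1}(\tL_0 e_i)$, being concentrated in a single degree and indecomposable, is a \emph{simple} module. So $\om{q+1}$ merely permutes the finite set of simples, and some power $\om{l(q+1)}$ is the identity on them. You do not need to know which simple you land on. (Also a bookkeeping slip: $\ker f_q = \om{q+1}\tL_0$, not $\om{q}\tL_0$.)

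Your ``only if'' direction has a genuine gap. You correctly sense that assuming $q=\infty$ and periodicity should lead to a contradiction, but none of your routes (bounded orbit of $\mathbf V_0$, spectral radius forcing $\varrho=1$ with $d=1$, Gelfand--Kirillov dimension $1$) actually closes: you never produce the contradiction, and in fact a Koszul self-injective algebra with $\varrho=1$, $d=1$ is not obviously impossible from the tools listed. The paper's argument is far more elementary and bypasses all of this machinery. If $\tL$ is periodic with minimal period $q'+1$, then $\om{q'+1}\tL_0 \simeq \tL_0$ is semisimple, hence concentrated in a single degree; tracking the grading through the resolution (each projective has Loewy length $n+2$) that degree must be $q'+n+1$. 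But this says exactly that in the minimal resolution \eqref{projres_tL} each $P^t$ is generated in degree $t$ for $t\le q'$ and $\ker f_{q'}$ is concentrated in degree $q'+n+1$ --- which is the \emph{definition} of $(n+1,q')$-Koszul. Hence $q=q'$ is finite. No Loewy matrix, no spectral radius, no Proposition~\ref{finitq} is needed; the lemma is a direct unpacking of what ``periodic'' and ``$(n+1,q)$-Koszul'' mean for the resolution of $\tL_0$.
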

\begin{proof}
For stable $n$-translation algebra $\tL$ with finite $q$, the $(q+1)$th syzygy $\om{q+1}\tL_0 e_i$ of a simple $\tL$-module $\tL_0 e_i$ is a module concentrated in degree $n+1+q$, by the definition.
So $\om{q+1}\tL_0 e_i$ is semi-simple.
Since $\tL$ is self-injective, the syzygies of an indecomposable module are all indecomposable.
Thus $\om{q+1}\tL_0 e_i$ is simple, and action of $\om{q+1}$ permutes the simples.
This implies that there is a $l>0$ such that $\om{l(q+1)}$ acts trivially on the simples and $\tL$ is periodic.

Now assume that $\tL$ is periodic with minimal period $q'+1$, then by the definition $\om{q'+1}\tL_0 \simeq \tL_0$ with finite $q'$.
So it is semi-simple and hence concentrated in degree $n+1+q'$, since all the projective modules have the Loewy length $n+1$.
Since $\tL$ is $(n+1,q)$-Koszul, so $q'\ge 2$ and we see in the projective resolution \eqref{projres_tL} of $\tL_0$, $P^t$ is generated in degree $t$ for $t \le q'$ and $\ker f_q = \om{q'+1}\tL_0$ is concentrated in degree $n+1+q'$.
So $q=q'$ is finite.
\end{proof}

Clearly, being periodic is a special case of complexity $1$.

\medskip

We have the following characterization of the periodicity and of the complexities for stable $n$-translation algebras using Loewy matrices.
\begin{thm}\label{cplloewy}
Let $\tL$ be a stable $n$-translation algebra with Coxeter index $q$ and Loewy matrix $L(\tL)$.
\begin{enumerate}
\item The algebra $\tL$ is periodic if and only if there is an positive integer $h$, such that $L^h(\tL)\mathbf V_0$ is negative;

\item The algebra  $\tL$ is of finite complexity if and only if $q$ is infinite and the spectral radius of $L(\tL)$ is $1$;

\item The  algebra $\tL$ is of infinite complexity if and only if $q$ is infinite and the  spectral radius of $L(\tL)$ is larger than $1$.
\end{enumerate}
\end{thm}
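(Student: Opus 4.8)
\medskip
\noindent\emph{Proof strategy.}

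For part~(1) the plan is to route everything through the Coxeter index. By Lemma~\ref{periodic}, $\tL$ is periodic if and only if $q$ is finite, so it suffices to prove that $q$ is finite exactly when $L^h(\tL)\mathbf V_0$ is negative for some positive integer~$h$. One direction is immediate: if $q<\infty$, Proposition~\ref{finitq} already gives that $L^{q+1}(\tL)\mathbf V_0$ is negative, so we take $h=q+1$. For the converse we argue by contradiction: assume $L^h(\tL)\mathbf V_0$ is negative for some $h$ but $q=\infty$. Then $\tL$ is Koszul, the minimal projective resolution \eqref{projres_tL} of $\tL_0$ is linear, each syzygy $\om{t}\tL_0$ is generated in degree~$t$, has Loewy length $\le n+1$, and is nonzero since $\tL$ is self-injective and not semisimple. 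Hence \eqref{Lldim} applies at every stage and, columnwise, $L^t(\tL)\mathbf V_0=\ldmv\om{t}\tL_0$ is the level dimensional vector of a nonzero module for all~$t$; such a matrix has only nonnegative entries, so it is never negative, contradicting the case $t=h$. This yields~(1).

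For (2) and (3) the first step is to dispose of the periodic case. By part~(1) and Lemma~\ref{periodic}, a non-periodic $\tL$ has $q=\infty$, so we may assume $\tL$ is a Koszul self-injective algebra of Loewy length $n+2$; then the Koszul cone $\caK$ is defined and $L(\tL)$ has a spectral radius $\varrho=\varrho_{L(\tL)}\ge 1$ of algebraic degree~$d$, as recalled after Proposition~\ref{Koszulcone}. The remaining content of (2) and (3) is the single equivalence $c(\tL)=\infty\iff\varrho>1$ (equivalently, $c(\tL)<\infty\iff\varrho=1$); the clause ``$q$ is infinite'' in these two items merely separates them from the periodic case treated in~(1).

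To settle this equivalence we treat the two inequalities for $\varrho$ separately. If $\varrho>1$, apply Proposition~\ref{ltone} to $M=\tL_0$, a non-projective Koszul $\tL$-module, to get $c_{\tL}(\tL_0)=\infty$ and hence $c(\tL)=\infty$. If $\varrho=1$, first bound the complexity of Koszul modules and then propagate the bound. For a Koszul module $M$ of Loewy length $\le n+1$ without projective summands, $\ldmv M\in\caK$ and $\ldmv\om{r}M=L^r(\tL)\ldmv M$ by \eqref{Lldim}; Proposition~\ref{Kcone} with $\varrho=1$ then produces a constant $C$ and an integer $1\le d'\le d$ with $\|L^r(\tL)\ldmv M\|\le C\,r^{d'-1}\le C\,r^{d-1}$ for almost all~$r$, so $\dmn_k\om{r}M\le C'\,r^{d-1}$ and $c_{\tL}(M)\le d$. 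For an arbitrary finitely generated graded module $M$ one uses $c_{\tL}(M)=c_{\tL}(\om{j}M)$ together with the fact that over the finite-dimensional Koszul algebra $\tL$ the minimal graded projective resolution of any finitely generated module is linear from some point on, so $\om{j}M$ is a Koszul module of Loewy length $\le n+1$ for $j\gg 0$; thus $c_{\tL}(M)\le d$ and, taking the supremum, $c(\tL)\le d<\infty$. Combining the two cases proves (2) and (3). An alternative for the implication $\varrho=1\Rightarrow c(\tL)<\infty$ is to identify $c(\tL)$ with $\gkdim\tG$ via Koszul duality and quote Theorem~\ref{GKdim_sptr}.

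I expect the main obstacle to lie precisely in the passage, under $\varrho=1$, from Koszul modules --- where Proposition~\ref{Kcone} applies verbatim --- to all finitely generated modules; that is, in showing that the polynomial growth rate read off from the Loewy matrix governs the complexity of the whole module category rather than only of Koszul modules (equivalently, in controlling $c(\tL)$ by $\gkdim\tG$). Two recurring minor points must also be handled with care: verifying at each iteration that the hypotheses of \eqref{Lldim} --- minimality, the expected generation degrees and absence of projective summands --- remain valid, so that powers of $L(\tL)$ genuinely compute level dimensional vectors of successive syzygies; and, for~(1), keeping track of the sign conventions, using that honest level dimensional vectors are nonnegative to rule out negativity. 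Everything else is a matter of assembling Lemma~\ref{periodic} and Propositions~\ref{finitq}, \ref{Koszulcone}, \ref{Kcone} and~\ref{ltone}.
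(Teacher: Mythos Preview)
Your proposal is correct and follows essentially the same route as the paper: part~(1) is handled via Lemma~\ref{periodic} and Proposition~\ref{finitq} (your explicit contradiction argument for the converse is a welcome clarification of what the paper leaves implicit in its appeal to Proposition~\ref{finitq}), and parts~(2)--(3) by separating $\varrho>1$ (Proposition~\ref{ltone}) from $\varrho=1$ (Proposition~\ref{Kcone}). The one place you diverge is the passage you flag as the main obstacle: the paper, like you, only bounds the complexity of Koszul modules when $\varrho=1$ and then declares $c(\tL)<\infty$; rather than your eventual-linearity argument for arbitrary modules (which is not obviously available in this generality), the simplest way to close this is to observe that $c(\tL)=c_{\tL}(\tL_0)$ for any finite-dimensional algebra, and $\tL_0$ is itself a Koszul module, so the bound $c_{\tL}(\tL_0)\le d$ already suffices.
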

\begin{proof}
Assume that $\tL$ is $(n+1,q)$-Koszul algebra.
By Lemma \ref{periodic}, $\tL$ is periodic if and only if $q$ is finite.
By Proposition \ref{finitq}, $q$ is finite if and only if $L^h(\tL) \mathbf v_0$ is negative for $h=q+1$.
This proves the first assertion.

Now assume that $q$ is infinite, then $\tL$ is Koszul.
By proposition \ref{Koszulcone}, $\caK$ is a solid cone invariant under the action of Loewy matrix $L(\tL)$.
By Theorem 3.1 of \cite{v68}, the spectral radius $\varrho$ is an eigenvalue of $L(\tL)$ with maximal algebraic degree among the eigenvalues with norm $\varrho$.
Note that the constant term of the characteristic polynomial of $L(\tL)$ is $1$ (see also Theorem 2.7 of \cite{gw00}).
So $\varrho \ge 1$.

If $\varrho >1$, by Proposition \ref{ltone},  $\tL$ is of infinite complexity.

Now assume that $\varrho=1$, then by Proposition \ref{Kcone}, there exist an integer $1 \le d' \le d$, such that $\{\frac{L^{r}(\tL) \ldmv M}{r^{d'-1} }| r \in \mathbb N\}$ is bounded set for any finitely generated Koszul $\tL$-module $M$.
So $\ldmv \om{r} M = L^{r}(\tL) \ldmv M \le r^{d-1} \bv$ for some positive vector $\bv$.
This implies that for sufficient large $r$, $\dmn_k \om{r} M \le \lambda r^{d-1}$ for some positive $\lambda$.
So the complexity of $M$ is $\le d$.
This proves that $\tL$ has finite complexity.
\end{proof}

Let $\tG$ be the quadratic dual of a stable $n$-translation algebra $\tL$, then $\tG$ is a $(q,n+1)$-algebra.
We can characterize the growth of $\tG$ as follows.

\begin{thm}\label{cls-dualGK}Let $\tL$ be a stable $n$-translation algebra with Coxeter index $q$ and spectral radius $\varrho$ and let $\tG=\tL^{!,op}$ be its quadratic dual.
Then
\begin{enumerate}
 \item $\tG$ is finite dimensional  if and only if $q$ is finite.

 \item $\tG$ is of finite Gelfand-Kirilov dimension if and only if $q$ is infinite and $\varrho = 1$.

 \item$\tG$ is of infinite Gelfand-Kirilov dimension if and only if $q$ is infinite and $\varrho > 1$.
\end{enumerate}
\end{thm}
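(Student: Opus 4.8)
\emph{The plan.} This theorem is in essence a repackaging of the definition of a $(q,n+1)$-Koszul algebra together with Theorem \ref{GKdim_sptr}, once the finite-dimensional case is peeled off; the argument runs in parallel with the proof of Theorem \ref{cplloewy}, with the complexity of $\tL$ replaced by the Gelfand-Kirilov dimension of its quadratic dual $\tG$.

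First I would establish (1). Since $\tL$ is a stable $n$-translation algebra it is $(n+1,q)$-Koszul, so by Propositions 2.2 and 2.6 of \cite{g16} the algebra $\tG = \tL^{!,op}$ is $(q,n+1)$-Koszul; by the definition of a $(p,q)$-Koszul algebra this gives $\tG_t = 0$ for all $t > q$. Hence if $q < \infty$ then $\tG = \bigoplus_{t=0}^{q} \tG_t$ is finite dimensional, the quiver of $\tG$ being finite. For the converse, suppose $q = \infty$; then $\tL$ is Koszul and self-injective of Loewy length $n+2$, so each simple module $\tL_0 e_i$ is non-projective, and since $\tL$ is self-injective all of its syzygies $\om{t} \tL_0 e_i$ are nonzero. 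Therefore $\Ext_{\tL}^{t}(\tL_0, \tL_0) \neq 0$ for every $t \ge 0$, and by Koszul duality $\dmn_k \tG_t = \dmn_k \Ext_{\tL}^{t}(\tL_0, \tL_0) \neq 0$ for all $t$, so $\tG$ is infinite dimensional. This proves (1), and in particular $q$ is finite if and only if $\tG$ is finite dimensional.

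For (2) and (3) I would reduce, using (1), to the case $q = \infty$: when $q < \infty$ one has $\gkdim \tG = 0$, and the three conditions in the statement are to be read as the resulting mutually exclusive trichotomy (finite dimensional; infinite dimensional of finite Gelfand-Kirilov dimension; infinite Gelfand-Kirilov dimension). So assume $q = \infty$; then $\tL$ is a Koszul stable $n$-translation algebra, the spectral radius $\varrho$ of the Loewy matrix $L(\tL)$ is defined, and $\varrho \ge 1$ by Theorem 2.7 of \cite{gw00}. If $\varrho = 1$, Theorem \ref{GKdim_sptr} gives $\gkdim \tG = d$, the algebraic degree of $\varrho$, which is finite; if $\varrho > 1$, Theorem \ref{GKdim_sptr} gives $\gkdim \tG = \infty$. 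Since the cases $\varrho = 1$ and $\varrho > 1$ are exhaustive, and finite and infinite Gelfand-Kirilov dimension are complementary, both the ``if'' and the ``only if'' directions of (2) and (3) follow.

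\emph{Main obstacle.} There is no serious obstacle: Theorem \ref{GKdim_sptr}, and behind it the spectral-radius analysis of \cite{gw00} underlying Propositions \ref{Koszulcone}, \ref{Kcone} and \ref{ltone}, does essentially all the work. The two points needing care are organizational: verifying that ``finite dimensional'', ``finite Gelfand-Kirilov dimension'' and ``infinite Gelfand-Kirilov dimension'' form a genuine trichotomy, so that the three ``only if'' statements are forced by the three ``if'' statements; and the converse in part (1), that $q = \infty$ forces $\tG$ to be infinite dimensional, which is exactly where the self-injectivity of $\tL$ --- hence the non-termination of the minimal projective resolution of $\tL_0$ --- is used.
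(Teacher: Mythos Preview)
Your proposal is correct and follows essentially the same approach as the paper: use the $(n+1,q)\leftrightarrow(q,n+1)$ duality for almost Koszul algebras to settle (1), and then invoke Theorem~\ref{GKdim_sptr} for the trichotomy in (2) and (3) once $q=\infty$. The only cosmetic difference is that the paper cites Propositions 3.11 and 3.4 of \cite{bbk02} for the duality (and simply asserts that $\tG$ is an infinite dimensional AS-regular algebra when $q=\infty$), whereas you cite Propositions 2.2 and 2.6 of \cite{g16} and spell out the converse of (1) via the nonvanishing of $\Ext^t_{\tL}(\tL_0,\tL_0)$; both are equivalent routes to the same fact.
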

\begin{proof}
If $q$ is finite, then $\tL$ is an almost Koszul algebra of type $(n+1,q)$, by the definition.
So $\tG$ is almost Koszul of type $(q,n+1)$ by Propositions 3.11 and 3.4 of \cite{bbk02}.
Thus it is of finite Loewy length, and hence is finite dimensional.
This proves the first assertion.

If $q$ is infinite, then $\tL$ is a Koszul self-injective algebra, and $\tG$ is an infinite dimensional AS-regular algebra.
The rest of the theorem follows from Theorem \ref{GKdim_sptr}.
\end{proof}

Combine  Theorems \ref{cplloewy} and \ref{cls-dualGK}, we get the following.
\begin{thm}\label{cls-dualpair}Let $\tL$ be a stable $n$-translation algebra and let $\tG=\tL^{!,op}$ be its quadratic dual.
Then
\begin{enumerate}
 \item $\tL$ is periodic if and only if $\tG$ is finite dimensional.

 \item $\tL$ is of finite complexity  if and only if $\tG$ is of finite Gelfand-Kirilov dimension.

 \item $\tL$ is of infinite complexity if and only if $\tG$ is of infinite Gelfand-Kirilov dimension.
\end{enumerate}
\end{thm}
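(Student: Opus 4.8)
The plan is to obtain Theorem \ref{cls-dualpair} as a direct comparison of Theorems \ref{cplloewy} and \ref{cls-dualGK}, since both describe their respective trichotomies in terms of the \emph{same} two invariants attached to $\tL$: the Coxeter index $q$ and the spectral radius $\varrho$ of the Loewy matrix $L(\tL)$. So the proof is essentially bookkeeping once those two theorems are in hand.

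First I would record that the stable $n$-translation algebras split into exactly three mutually exclusive, exhaustive classes. By definition $q$ is either finite or infinite. When $q$ is infinite, $\tL$ is a Koszul self-injective algebra, so by Proposition \ref{Koszulcone} together with Theorem 3.1 of \cite{v68} the spectral radius $\varrho$ is an eigenvalue of $L(\tL)$; since the constant term of the characteristic polynomial of $L(\tL)$ is $1$ (Theorem 2.7 of \cite{gw00}) we get $\varrho \ge 1$, so exactly one of $\varrho = 1$ or $\varrho > 1$ holds. Hence every stable $n$-translation algebra lies in exactly one of the cases: (i) $q$ finite; (ii) $q$ infinite and $\varrho = 1$; (iii) $q$ infinite and $\varrho > 1$.

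Then I would match the two trichotomies against this partition. By Theorem \ref{cplloewy}, $\tL$ is periodic in case (i), of finite complexity in case (ii), and of infinite complexity in case (iii). By Theorem \ref{cls-dualGK}, $\tG = \tL^{!,op}$ is finite dimensional in case (i), of finite Gelfand-Kirilov dimension in case (ii), and of infinite Gelfand-Kirilov dimension in case (iii). Reading these two assignments case by case gives the three claimed equivalences.

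Since the argument is purely a case comparison once Theorems \ref{cplloewy} and \ref{cls-dualGK} are available, there is no genuine obstacle here; the only point deserving care is confirming that the trichotomy really is exhaustive, i.e.\ that $\varrho$ cannot be $< 1$, which is exactly the statement that the characteristic polynomial of $L(\tL)$ has constant term $1$.
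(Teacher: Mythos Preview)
Your proposal is correct and follows exactly the paper's approach: the paper's entire proof is the single line ``Combine Theorems \ref{cplloewy} and \ref{cls-dualGK}'', and you have spelled out that combination carefully, including the exhaustiveness of the trichotomy via $\varrho \ge 1$. One small citation point: the equivalence ``$\tL$ periodic $\Leftrightarrow$ $q$ finite'' that you use for case (i) is Lemma \ref{periodic} rather than Theorem \ref{cplloewy}(1) (which is phrased in terms of $L^h(\tL)\mathbf V_0$ being negative), so you should cite Lemma \ref{periodic} there.
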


By Theorem \ref{Noe_sptr}, we also have
\begin{thm}\label{Noe_GKdim}
If $\tG$ is Noetherian, then $\tG$ is of finite Gelfand-Kirilov dimension.
\end{thm}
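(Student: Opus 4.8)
The plan is to obtain Theorem~\ref{Noe_GKdim} as a formal consequence of Theorem~\ref{Noe_sptr} together with the growth dichotomy for $\tG$ established in Theorem~\ref{cls-dualGK}, splitting the argument according to whether the Coxeter index $q$ of the stable $n$-translation algebra $\tL=(\Pi(\GG))^{!,op}$ (with $\tG=\tL^{!,op}$) is finite or infinite.

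First I would dispose of the case $q<\infty$. Here Theorem~\ref{cls-dualGK}(1) gives directly that $\tG$ is finite dimensional, hence $\gkdim\tG=0$ and there is nothing further to prove; the Noetherian hypothesis is not even needed in this branch, since finite dimensional algebras are automatically Noetherian. It is important to separate this case out, because Theorem~\ref{Noe_sptr} is stated only for \emph{Koszul} stable $n$-translation algebras, that is, under the standing assumption $q=\infty$, so one cannot apply it to the finite-$q$ situation.

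So assume $q=\infty$. Then by definition $\tL$ is a Koszul self-injective algebra of Loewy length $n+2$ and $\tG$ is an infinite dimensional AS-regular algebra, so Theorem~\ref{Noe_sptr} is applicable: the hypothesis that $\tG$ is Noetherian forces the spectral radius $\varrho$ of the Loewy matrix $L(\tL)$ to equal $1$. Now I would invoke Theorem~\ref{cls-dualGK}(2): with $q$ infinite and $\varrho=1$, it follows that $\tG$ is of finite Gelfand-Kirilov dimension. (If one wants the precise value, tracing back through Theorem~\ref{GKdim_sptr} shows $\gkdim\tG$ equals the algebraic degree $d$ of the spectral radius $\varrho=1$.) This finishes the proof.

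I do not expect any genuine obstacle: the result is essentially a corollary once the dichotomy on $q$ is made explicit. The only point that needs a little care is the one already noted — Theorem~\ref{Noe_sptr} presupposes $q=\infty$, so it must not be applied blindly, and the finite-$q$ branch has to be handled separately via the easier part~(1) of Theorem~\ref{cls-dualGK}.
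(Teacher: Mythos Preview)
Your proposal is correct and takes essentially the same approach as the paper, which simply writes ``By Theorem~\ref{Noe_sptr}, we also have'' before stating Theorem~\ref{Noe_GKdim}. Your version is more careful in making the case split on $q$ explicit and in noting that Theorem~\ref{Noe_sptr} is stated under the standing Koszul hypothesis $q=\infty$, a point the paper glosses over.
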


Assume that $\GG$ is an acyclic $n$-slice algebra with quiver $Q^{\perp}$, let $\LL=\GG^{!,op}$ be its quadratic dual.
Let $\nu$ be the automorphism of $\LL$ defined by sending each arrow $\xa$ in $Q$ to $(-1)^n\xa$.
By Theorem 6.6 of \cite{g20}, we have that \eqqcn{}{\dtnL = \Delta_{\nu}\GG^{!,op}\simeq \Pi(\GG)^{!,op},}
where $\Pi(\GG)$ be the $(n+1)$-preprojective algebra,and $\dtnL$ be a twisted trivial extension of $\LL$ with respect $\nu$.
Let $\tG= \Pi(\GG)$  and  $\tL=\dtnL$.
$\tL$ is a stable $n$-translation algebra with trivial $n$-translation, and $\tG$ is an AS-regular algebra.
We have classification for $\tL$ and $\tG$, using numerical invariants such as complexity and Gelfand-Kirilov dimension, respectively.

When $n=1$, $\GG$ is an hereditary algebra, then $\GG=kQ$ is the path algebra of quiver $Q$.
It is well known that the path algebras are classified, according to their representation type, as finite, tame and wild ones.
This classification can be done using the underlying graphs of the quivers.
That is, a path algebra $kQ$ is of finite type if the underlying graph of $Q$ is a Dynkin diagram,  of tame type if the underlying graph of $Q$ is an Euclidean diagram and \ of wild type otherwise.
This classification can also be read out from the preprojective algebra $\Pi(Q)$ of the path algebra $kQ$.
The path algebra $kQ$ is of finite representation type if $\Pi(Q)$ is finite dimensional, of tame type if $\Pi(Q)$ is of Gelfand-Kirilov dimension $2$ and of wild type if $\Pi(Q)$ is of infinite Gelfand-Kirilov dimension, \cite{dr81}.

Now we define that an $n$-slice algebra $\GG$ is  of {\em finite type} if $\tG$ is finite dimensional, of {\em tame type} if  $\tG$ is of finite Gelfand-Kirilov dimension (not zero), and of {\em wild type}  if $\tG$ is of infinite Gelfand-Kirilov dimension.

As an immediate consequence of Theorem \ref{cls-dualpair}, we get a classification of $n$-slice algebras.
\begin{thm}\label{classification}
An $n$-slice algebra is  of finite type, or of tame type, or of wild type.
\end{thm}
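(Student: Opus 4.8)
The plan is to read this off as an immediate corollary of Theorem~\ref{cls-dualpair} (equivalently Theorem~\ref{cls-dualGK}) once the correct stable $n$-translation algebra has been attached to $\GG$. First I would recall the setup from picture~\eqref{depictingalg}: for an acyclic $n$-slice algebra $\GG$ with quadratic dual $\LL = \GG^{!,op}$, Theorem~6.6 of \cite{g20} gives $\tG = \Pi(\GG) \simeq \tL^{!,op}$, where $\tL = \dtnL$ is the twisted trivial extension of $\LL$ along the sign automorphism $\nu$. The essential point is that $\tL$ is a stable $n$-translation algebra (with trivial $n$-translation), hence has a well-defined Coxeter index $q$, which is either an integer $\ge 2$ or $\infty$, so that Theorem~\ref{cls-dualGK} applies to the AS-regular dual $\tG = \tL^{!,op}$.

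Next I would invoke the trichotomy for $q$ and the spectral radius $\varrho$ of $L(\tL)$. By definition $q$ is either finite or infinite, and these are mutually exclusive. When $q$ is infinite, $\tL$ is Koszul self-injective, so by Proposition~\ref{Koszulcone} the Koszul cone $\caK$ is a solid cone invariant under $L(\tL)$; then Theorem~3.1 of \cite{v68} together with the fact that the characteristic polynomial of $L(\tL)$ has constant term $1$ (Theorem~2.7 of \cite{gw00}) forces $\varrho \ge 1$. Hence exactly one of the following holds: (a) $q$ is finite; (b) $q$ is infinite and $\varrho = 1$; (c) $q$ is infinite and $\varrho > 1$.

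Then I would translate each case through Theorem~\ref{cls-dualGK}: in case (a), $\tG$ is finite dimensional; in case (b), $\tG$ is of finite Gelfand--Kirilov dimension; in case (c), $\tG$ is of infinite Gelfand--Kirilov dimension. Moreover, in cases (b) and (c) the condition $q = \infty$ forces $\tG$ to be infinite dimensional, so its Gelfand--Kirilov dimension is $\ge 1 > 0$; thus in case (b) the Gelfand--Kirilov dimension is nonzero, matching the definition of tame type. Comparing with the definitions stated just before the theorem --- $\GG$ is of finite type, tame type, or wild type according to whether $\tG$ is finite dimensional, of finite nonzero Gelfand--Kirilov dimension, or of infinite Gelfand--Kirilov dimension --- cases (a), (b), (c) correspond precisely to $\GG$ being of finite, tame, or wild type, and the theorem follows.

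There is no deep difficulty here once Theorems~\ref{cls-dualGK} and~\ref{cls-dualpair} are available; the only points requiring care are (i) confirming that $\dtnL$ is genuinely a stable $n$-translation algebra, so that those theorems apply and $q$ is well defined, and (ii) verifying that the trichotomy is exhaustive, i.e.\ that $\varrho$ cannot drop below $1$ in the Koszul case --- which is exactly where the spectral-theory input (invariance of the solid cone $\caK$ under $L(\tL)$ combined with the cited result of \cite{v68}) is used.
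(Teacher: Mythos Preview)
Your proposal is correct and follows essentially the same approach as the paper: the paper simply records the theorem as an immediate consequence of Theorem~\ref{cls-dualpair} (after the setup identifying $\tG=\Pi(\GG)$ with $\tL^{!,op}$ for $\tL=\dtnL$ a stable $n$-translation algebra), and your argument spells out in more detail why the three cases in Theorem~\ref{cls-dualGK} are exhaustive via the trichotomy on $q$ and $\varrho$. The extra care you take in verifying $\varrho\ge 1$ when $q=\infty$ is already embedded in the paper's proof of Theorem~\ref{cplloewy}, so nothing new is needed.
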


For an $n$-slice algebra $\GG$,  we have the following characterizations of the classification.

\begin{thm}\label{cls_usen-tra}
\begin{enumerate}
 \item $\GG$ is of finite type if and only if $\tL$ is periodic, if and only if $L^h(\tL)\mathbf v_0$ is negative for some $h$, if and only if $q$ is finite.

 \item $\GG$ is of tame type if and only if $\tL$ is of finite complexity, if and only if $q$ is infinite and  the  spectral radius of $L(\tL)$ is $1$.

 \item $\GG$ is of wild type if and only if $\tL$ is of infinite complexity, if and only if  $q$ is infinite and  the  spectral radius of $L(\tL)$ is larger than $1$.
 \end{enumerate}
\end{thm}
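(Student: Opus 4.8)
The plan is to deduce this entirely from results already established, so the argument is essentially a bookkeeping exercise that threads together the definition of the three types with Theorems \ref{cls-dualpair} and \ref{cplloewy}. Recall the setup preceding the statement: since $\GG$ is an $n$-slice algebra, the twisted trivial extension $\tL = \dtnL$ is a stable $n$-translation algebra with trivial $n$-translation, the $(n+1)$-preprojective algebra $\tG = \Pi(\GG)$ is its quadratic dual $\tL^{!,op}$ (from $\dtnL \simeq \Pi(\GG)^{!,op}$ in Theorem 6.6 of \cite{g20} together with $(\tL^{!,op})^{!,op} = \tL$), and by definition $\GG$ is of finite type, of tame type, or of wild type precisely when $\tG$ is finite dimensional, of finite nonzero Gelfand-Kirilov dimension, or of infinite Gelfand-Kirilov dimension, respectively.

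First I would apply Theorem \ref{cls-dualpair} to the quadratic-dual pair $(\tL,\ \tG = \tL^{!,op})$, which at once gives that $\tG$ is finite dimensional if and only if $\tL$ is periodic, that $\tG$ is of finite Gelfand-Kirilov dimension if and only if $\tL$ is of finite complexity, and that $\tG$ is of infinite Gelfand-Kirilov dimension if and only if $\tL$ is of infinite complexity; combined with the definitions just recalled, this matches ``$\GG$ of finite type'' with ``$\tL$ periodic'', ``$\GG$ of tame type'' with ``$\tL$ of finite complexity'', and ``$\GG$ of wild type'' with ``$\tL$ of infinite complexity''. Next I would feed $\tL$ into Theorem \ref{cplloewy}, which converts the three regimes into statements about the Loewy matrix $L(\tL)$ and the Coxeter index $q$: periodicity of $\tL$ is equivalent to $L^h(\tL)\mathbf v_0$ being negative for some positive integer $h$, which by Lemma \ref{periodic} and Proposition \ref{finitq} (with $h = q+1$) is equivalent to $q$ being finite; finite complexity of $\tL$ is equivalent to ``$q$ infinite and the spectral radius of $L(\tL)$ equal to $1$''; and infinite complexity of $\tL$ is equivalent to ``$q$ infinite and the spectral radius of $L(\tL)$ larger than $1$''. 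Concatenating these two blocks of equivalences with the definition of the three types produces the three displayed chains.

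The one point that needs care — rather than a genuine obstacle — is checking that the trichotomy really is a trichotomy on both sides. On the $\tL$-side this is clear: $\tL$ is $(n+1,q)$-Koszul, so either $q$ is finite or $q = \infty$, and in the latter case the spectral radius $\varrho$ of $L(\tL)$ satisfies $\varrho \ge 1$ (the constant term of the characteristic polynomial of $L(\tL)$ being $1$, cf.\ Theorem 2.7 of \cite{gw00}), so exactly one of $\varrho = 1$ and $\varrho > 1$ holds. On the $\tG$-side one must note that ``$\tL$ of finite complexity'' corresponds precisely to ``$\tG$ of finite \emph{nonzero} Gelfand-Kirilov dimension'': when $q = \infty$ the algebra $\tG = \tL^{!,op}$ is infinite-dimensional AS-regular by Theorem \ref{cls-dualGK}, hence has Gelfand-Kirilov dimension at least $1$, so the ``nonzero'' qualifier in the definition of tame type is automatically satisfied and creates no mismatch with the finite-complexity case. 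With these observations the equivalences line up exactly as stated.
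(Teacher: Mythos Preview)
Your proposal is correct and follows essentially the same approach as the paper: the paper's proof is a one-line citation ``This follows from the above Theorems \ref{cls-dualGK} and \ref{cls-dualpair}'', and you likewise deduce everything from the already-established results, threading the definition of the types through Theorem \ref{cls-dualpair} and then through the Loewy-matrix characterisations. The only cosmetic difference is that you invoke Theorem \ref{cplloewy} (together with Lemma \ref{periodic} and Proposition \ref{finitq}) rather than Theorem \ref{cls-dualGK}; this is arguably more precise, since the clause ``$L^h(\tL)\mathbf v_0$ is negative for some $h$'' appears only in Theorem \ref{cplloewy}, so your citation covers that equivalence explicitly where the paper's brief proof leaves it implicit.
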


\begin{proof}
This follows from the above Theorems \ref{cls-dualGK} and \ref{cls-dualpair}.
\end{proof}


When $n=1$, $\tL $ is the algebra with vanishing radical cube and $q$ is related to  the Coxeter number of a Dynkin diagram \cite{bbk02}.
When $n=1$, we also have that, $\GG$ is tame if and only if $\tG$ is Noetherian,  by \cite{bgl87}.
For the general case, we have that the following Proposition, as a consequence of Theorem \ref{Noe_sptr}.
\begin{pro}\label{cls_tame}
If $\tG$ is Noetherian of infinite dimension, then $\GG$ is tame.
\end{pro}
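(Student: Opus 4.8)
The plan is to read this off directly from the numerical characterization of the three types already obtained, feeding in the one extra input that Noetherianity forces the spectral radius to be $1$. Recall from the discussion preceding Theorem~\ref{classification} that, with $\tG = \Pi(\GG)$ and $\tL = \dtnL$, the algebra $\tL$ is a stable $n$-translation algebra with some Coxeter index $q$ and $\tG = \tL^{!,op}$, and that $\GG$ is of finite, tame, or wild type according to whether $\tG$ is finite dimensional, of finite (nonzero) Gelfand--Kirilov dimension, or of infinite Gelfand--Kirilov dimension.

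First I would rule out the finite type. By hypothesis $\tG$ is infinite dimensional, so by Theorem~\ref{cls-dualGK}(1) (equivalently Theorem~\ref{cls_usen-tra}(1)) the Coxeter index $q$ cannot be finite; hence $q = \infty$ and $\tL$ is a Koszul stable $n$-translation algebra. Next I would invoke Theorem~\ref{Noe_sptr}: since $\tG$ is Noetherian, the spectral radius $\varrho$ of the Loewy matrix $L(\tL)$ equals $1$. Combining these two facts with Theorem~\ref{cls_usen-tra}(2) --- which says $\GG$ is of tame type exactly when $q$ is infinite and $\varrho = 1$ --- gives the conclusion. Alternatively one can argue on the $\tG$-side: Theorem~\ref{Noe_GKdim} gives that $\tG$ has finite Gelfand--Kirilov dimension, and since $\tG$ is infinite dimensional this dimension is nonzero, so $\GG$ is tame straight from the definition.

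There is essentially no obstacle here beyond bookkeeping; the only point that deserves a line is the observation that an infinite-dimensional $\tG$ has positive Gelfand--Kirilov dimension, so that the degenerate ``finite type'' alternative is genuinely excluded. This is immediate from the standard fact that a finitely generated graded algebra generated in degrees $0$ and $1$ has Gelfand--Kirilov dimension $0$ if and only if it is finite dimensional. No further analysis of the Loewy matrix or the Koszul cone is required, since all the substantive work is already packaged in Theorems~\ref{cls_usen-tra} and~\ref{Noe_sptr}.
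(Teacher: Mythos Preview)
Your proposal is correct and follows essentially the same route as the paper, which simply records the proposition as an immediate consequence of Theorem~\ref{Noe_sptr}. You have just spelled out the two obvious steps the paper leaves implicit: first exclude finite $q$ using the infinite-dimensionality of $\tG$, then apply Theorem~\ref{Noe_sptr} (equivalently Theorem~\ref{Noe_GKdim}) and read off tameness from Theorem~\ref{cls_usen-tra}(2) or directly from the definition.
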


It is natural to ask if the converse of Proposition \ref{cls_tame} is true?

\section{McKay Quivers and $n$-slice Algebras}
\label{sec:mckayq}

By Proposition \ref{cls_tame}, if the $(n+1)$-preprojective algebra $\tG$ of an $n$-slice algebra is Noetherian, then $\GG$ is tame.
In this Section, we discuss  $n$-slice algebras related to McKay quivers, they are always tame.

In this section, we assume that $k$ is an algebraically closed field of characteristic $0$.
It is well known that the quiver $\tQ$ of the preprojective algebra of a path algebra $kQ$ is the double quiver of $Q$.
The double quivers of Euclidean quivers are exactly the  McKay quivers of finite subgroup of $\mathrm{SL}(2,\mathbb C)$ \cite{m79}.

McKay quiver was introduced in \cite{m79}.
Let  $V$ be an $n$-dimensional vector space over $\mathbb C$ and let $G \subset \mathrm{GL}(n,\mathbb C) = \mathrm{GL}(V)$ be a finite subgroup.
$V$ is naturally a faithful representation of $G$.
Let $\{S_i | i =1, 2, \ldots, l\}$ be a complete set of irreducible representations of $G$ over $\mathbb C$.
For each $S_i$, decompose the tensor product $V \otimes S_i$ as a direct sum of irreducible representations, i.e., \eqqcn{fusion}{V \otimes S_i = \bigoplus_j S_j^{a_{i, j}}, \quad\mbox{  } \quad 1\le i\le l } here $ S_j^{a_{i, j}}$ denotes a direct sum of $a_{i,j}$ copies of $S_j$.
The McKay quiver $\tQ=\tQ(G)$ of $G$ is defined as follows.
The vertex set $\tQ_0 $ is the set of (indices of) the isomorphism classes of irreducible representations of $G$, and there are $a_{i,j}$ arrows from the vertex $i$ to the vertex $j$.

We recall some results on the McKay quivers of Abelian groups and on the relationship between McKay quivers of same group in $\mathrm{GL}(n,\mathbb C)$ and in $\mathrm{SL}(n+1,\mathbb C)$ \cite{g09,g11}.



Let $G$ be an  Abelian  group, then \eqqcn{}{G= G(r_1,\cdots,r_{m})= C_{r_1}\times \cdots \times C_{r_{m}}} is a direct product of $m$ cyclic groups of order $r_1,\ldots, r_{m}$ .
Write $\diag(x_1,\cdots,x_m)$ for the diagonal matrices with diagonal entries $x_1,\cdots,x_m$.
Let $\xi_{r}$ be the $r$th root of the unit, embed $G$ into $\mathrm{SL}(m+1,\mathbb C)$ by sending \eqqcn{abeliangpembed}{(\xi_{r_1}^{i_1},\xi_{r_2}^{i_2}, \cdots , \xi_{r_{m}}^{i_{m}}) \lrw \diag (\xi_{r_1}^{i_1},\xi_{r_2}^{i_2}, \cdots , \xi_{r_{m}}^{i_{m}}), }
for $\bi=(i_1, \cdots , i_m)\in \zZ/r_1\zZ \times \cdots \times \zZ/r_{m}\zZ$.
Let $\be_t = (0,\cdots,0,1,0\cdots,0) \in  \zZ/r_1\zZ \times \cdots \times \zZ/r_{m}\zZ$ be the element with $1$ of $\zZ/r_t\zZ$ at the $t$th position, and $0$ otherwise, and $\be= \sum_{t=1}^{m} \be_t$.
The following Proposition can be proven by using Proposition 3.2 of \cite{g11} and  Section 3 of \cite{g09}.
We give an inductive proof to show how such quiver is constructed.

\begin{pro}\label{abelmcq}
The McKay quiver $\tQ(r_1,\cdots,r_{m})$  of the group $G(r_1,\cdots,r_{m})$ in $\mathrm{SL}(m+1,\mathbb C)$ is the quiver with  vertex set\eqqc{abelianqv}{\tQ_0(r_1,\cdots,r_{m}) = \zZ/r_1\zZ \times \cdots \times \zZ/r_{m}\zZ} and the arrow set \small\eqqc{abelianqa}{\arr{ll}{\tQ_1(r_1,\cdots,r_{m}) = & \{\xa_{\bi, t}: \bi  \to \bi +\be_t |\bi \in  \zZ/r_1\zZ \times \cdots \times \zZ/r_{m}\zZ, 1\le t \le m\} \\ & \qquad \qquad \cup \{\xa_{\bi,m+1}: \bi \to \bi -\be | \bi\in \zZ/r_1\zZ \times \cdots \times \zZ/r_{m}\zZ \}.}}\normalsize
\end{pro}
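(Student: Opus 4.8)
The plan is to exploit that $G=G(r_1,\dots,r_{m})$ is Abelian, so that every irreducible $\mathbb C$-representation is one-dimensional and the set of isomorphism classes is the character group $\wht G$, which is canonically identified with $\zZ/r_1\zZ\times\cdots\times\zZ/r_{m}\zZ$ by letting $\bj=(j_1,\dots,j_m)$ correspond to the character $S_{\bj}$ with $S_{\bj}\bigl(\diag(\xi_{r_1}^{i_1},\dots,\xi_{r_m}^{i_m})\bigr)=\xi_{r_1}^{i_1j_1}\cdots\xi_{r_m}^{i_mj_m}$; this already yields the vertex set \eqref{abelianqv}. For the arrows, since $G$ is Abelian, computing the McKay quiver reduces to decomposing the defining representation $V$ into characters and then translating by $\bj$: if $V\iso\bigoplus_s S_{\bv_s}$ as a $G$-module, then $V\otimes S_{\bj}\iso\bigoplus_s S_{\bj+\bv_s}$, so there is exactly one arrow $\bj\to\bj+\bv_s$ for each index $s$.

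First I would record the weight decomposition of $V$ under the embedding into $\mathrm{SL}(m+1,\mathbb C)$. Writing $x_1,\dots,x_{m+1}$ for the standard coordinates of $V=\mathbb C^{m+1}$, the diagonal matrix with first $m$ entries $\xi_{r_1}^{i_1},\dots,\xi_{r_m}^{i_m}$ acts on $x_t$ by $\xi_{r_t}^{i_t}$ for $1\le t\le m$, that is, by the character $\be_t$; and the condition $\det=1$ forces the last entry to be $\xi_{r_1}^{-i_1}\cdots\xi_{r_m}^{-i_m}$, so $x_{m+1}$ affords the character $-\be=-\sum_{t=1}^m\be_t$. Hence $V\iso\bigl(\bigoplus_{t=1}^m S_{\be_t}\bigr)\oplus S_{-\be}$, and therefore $V\otimes S_{\bj}\iso\bigl(\bigoplus_{t=1}^m S_{\bj+\be_t}\bigr)\oplus S_{\bj-\be}$ for every $\bj$; reading off the multiplicities $a_{\bj,\bj'}$ gives exactly one arrow $\xa_{\bj,t}\colon\bj\to\bj+\be_t$ for each $1\le t\le m$ together with one arrow $\xa_{\bj,m+1}\colon\bj\to\bj-\be$, which is the arrow set \eqref{abelianqa}.

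To present this inductively, as announced, I would first carry out the corresponding computation inside $\mathrm{GL}(m,\mathbb C)$ by induction on $m$: for the diagonal action of $G(r_1,\dots,r_m)$ on $\mathbb C^m$ one uses the $G$-module decomposition $\mathbb C^m=\mathbb C^{m-1}\oplus\mathbb C$, where the first summand is the pullback of the defining module of $G(r_1,\dots,r_{m-1})$ along the projection $\zZ/r_1\zZ\times\cdots\times\zZ/r_m\zZ\to\zZ/r_1\zZ\times\cdots\times\zZ/r_{m-1}\zZ$ and the second summand affords the character $\be_m$; tensoring with $S_{\bj}$ and using the inductive hypothesis shows that the McKay quiver for $m$ is obtained from the (pulled-back) McKay quiver for $m-1$ by adjoining the arrows $\bj\to\bj+\be_m$, which is the content of Section 3 of \cite{g09} (the base case $m=1$ being the cyclic quiver on $r_1$ vertices with arrows $j\to j+1$). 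Passing from the image of $G$ in $\mathrm{GL}(m,\mathbb C)$ to its image in $\mathrm{SL}(m+1,\mathbb C)$ then adds, by Proposition 3.2 of \cite{g11}, exactly one new arrow at each vertex, namely from $S_{\bj}$ to $S_{\bj}\otimes(\det)^{-1}=S_{\bj-\be}$; combining the two steps recovers the stated quiver, and for $m=1$ this is the $\widetilde A_{r_1-1}$ McKay quiver of $C_{r_1}\subset\mathrm{SL}(2,\mathbb C)$.

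The computation itself is routine because $G$ is Abelian, so no multiplicities arise beyond counting summands; I expect the only delicate point to be keeping three indexing conventions mutually consistent — the embedding $(\xi_{r_1}^{i_1},\dots,\xi_{r_m}^{i_m})\mapsto\diag(\xi_{r_1}^{i_1},\dots,\xi_{r_m}^{i_m})$, the identification of $\wht G$ with $\zZ/r_1\zZ\times\cdots\times\zZ/r_m\zZ$, and the description in Proposition 3.2 of \cite{g11} of the new arrow as $S_{\bj}\to S_{\bj}\otimes(\det)^{-1}$ — so that the returning arrows $\xa_{\bj,m+1}$ come out pointing to $\bj-\be$ exactly as claimed. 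Once these conventions are aligned, both the base case and the inductive step are immediate.
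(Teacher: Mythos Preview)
Your direct character computation in the first two paragraphs is correct and complete: since $G$ is Abelian the irreducibles are the characters $S_{\bj}$, and the weight decomposition $V\cong\bigl(\bigoplus_{t=1}^{m}S_{\be_t}\bigr)\oplus S_{-\be}$ immediately gives the arrows. This is more elementary than what the paper does. The paper's induction has a different structure: it zigzags between special and general linear groups, passing at each step from $\mathrm{SL}(h+1,\mathbb C)$ to $\mathrm{GL}(h+1,\mathbb C)$ via the covering theorem (Theorem~1.2 of \cite{g11}, which realises the McKay quiver of $G(r_1,\dots,r_{h+1})$ in $\mathrm{GL}(h+1,\mathbb C)$ as an $r_{h+1}$-fold regular cover of the McKay quiver of $G(r_1,\dots,r_h)$ in $\mathrm{SL}(h+1,\mathbb C)$), and then from $\mathrm{GL}(h+1,\mathbb C)$ to $\mathrm{SL}(h+2,\mathbb C)$ by adjoining the Nakayama arrow (Theorem~3.1 of \cite{g11}). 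Your inductive sketch in the third paragraph is yet a third variant: you induct entirely inside $\mathrm{GL}$ and apply the $\mathrm{GL}\to\mathrm{SL}$ passage only once at the end. All three are valid; your direct argument is the shortest, while the paper's zigzag is chosen precisely to exhibit the covering-plus-Nakayama machinery of \cite{g11} in this explicit Abelian case, which is useful later when the same machinery is invoked for non-Abelian groups in Proposition~\ref{embedmcq}.
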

\begin{proof}
We prove by induction on $m$.

The Abelian subgroup in $\mathrm{SL}(1,\mathbb C)$ is trivial and its  McKay quiver is a loop.
So Proposition holds for $m=0$.

Assume Proposition holds for $m=h$.

Embed the group $G(r_1,\cdots,r_h,r_{h+1})$ in $\mathrm{GL}(h+1,\mathbb C)$ by sending $$(\xrt{1},\cdots,\xrt{h}) \lrw \mathrm{diag}(\xrt{1}, \cdots, \xrt{h}, \xrt{h+1}),$$ then group $G(r_1,\cdots,r_h)$ is a subgroup of $G(r_1,\cdots,r_h,r_{h+1})$ with $$G(r_1,\cdots,r_h,r_{h+1})\cap \mathrm{SL}(h+1,\mathbb C) = G(r_1,\cdots,r_h),$$ and we have $$G(r_1,\cdots,r_h,r_{h+1}) = G(r_1,\cdots,r_h)\times C'_{r_{h+1}},$$ where $ C'_{r_{h+1}} = (\mathrm{diag}(1,\cdots,1,\xi_{r_{h+1}}))$ is the cyclic group generated by the element $\mathrm{diag}(1,\cdots,1,\xi_{r_{h+1}})$.
So $$G(r_1,\cdots,r_h,r_{h+1})/G(r_1,\cdots,r_h,r_{h+1})\cap \mathrm{SL}(h+1,\mathbb C) \simeq (\xi_{r_{h+1}}) .$$
By Theorem 1.2 of \cite{g11}, the McKay quiver of $G(r_1,\cdots,r_h,r_{h+1})$ in $\mathrm{GL}(h+1,\mathbb C)$ is a regular covering of the McKay quiver of $ G(r_1,\cdots,r_h)$ in $\mathrm{SL}(h+1,\mathbb C)$ with the automorphism group the cyclic group $(\xi_{r_{h+1}})$ of order $r_{h+1}$, whose generator is induced by the Nakayama permutation.
So the vertex set of the McKay quiver $\tQ'(r_1,\cdots,r_h,r_{h+1})$ for $G(r_1,\cdots,r_h,r_{h+1})$ in $\mathrm{GL}(h+1,\mathbb C)$ is $r_{h+1}$ copies of $\tQ(r_1,\cdots,r_h)$, indexed by $\zZ/r_{h+1}\zZ$.
Write $\bv\sk{h}$ for a vector $\bv$ to indicate that its  dimension is $h$.
Since the arrows given by the vector $\be\sk{h}$ represents the Nakayama transformation, it induces  arrows  $$\xa_{\bi,h+1}: (\bi\sk{h},t) \to (\bi\sk{h}-\be\sk{h}, t+1) $$ from one copy $\tQ(r_1,\cdots,r_h)$ to the next in $\tQ'(r_1,\cdots,r_h,r_{h+1})$.
The other arrows are determined by the representations of $G(r_1,\cdots,r_h) $, so the arrows given by the vectors $\be_t\sk{h}$ induce arrows inside each copy.
Changing the indices by subtracting $(\be\sk{h},0)$ from the indices of all the vertices, we may take
$$\arr{lll}{\tQ'_1(r_1,\cdots,r_h,r_{h+1}) &=& \{\xa_{\bi,t}: \bi \to \bi+\be_t\sk{h+1} \\&&\quad|\bi\in \bi \in  \zZ/r_1\zZ \times \cdots \times \zZ/r_{h+1}\zZ, 1\le t \le h+1 \}}$$ as the arrow set.

Now embed $G(r_1,\cdots,r_h,r_{h+1}) $ to $\mathrm{SL}(h+2,\mathbb C)$ by sending $$(\xi_{r_1}^{i_1}, \cdots , \xi_{r_{h+1}}^{i_{h+1}}) \lrw \diag (\xi_{r_1}^{i_1}, \cdots , \xi_{r_{h+1}}^{i_{h+1}}, \xrmt{1}\cdots\xrmt{h+1}) .$$
Since Nakayama permutation $\sigma$ of $\tQ'(r_1,\cdots,r_h,r_{h+1}) $ sends each vertex $\bi$ to the ending vertex of a path of length $h+1$ with arrows defined by different $\be_t$'s, that is, sending $\bi$ to $\bi-\be$.
Then by Theorem 3.1 of \cite{g11}, the McKay quiver $\tQ(r_1,\cdots,r_h,r_{h+1}) $ of $G(r_1,\cdots,r_h,r_{h+1}) $ in $\mathrm{SL}(h+2,\mathbb C)$ is obtained from the quiver $\tQ'(r_1,\cdots,r_h,r_{h+1}) $ by adding an arrow $\xa_{\bi,m+2}$ from $\bi$ to $\bi-\be$ for each vertex $\bi$ in $\tQ_0(r_1,\cdots,r_h,r_{h+1})$.

This shows that Proposition holds for $h+1$ and finishes the proof.
\end{proof}

Note that the Nakayama permutation for the subgroup of a special linear group is trivial.
As a direct consequence of Proposition 3.1 of \cite{g11}, we also have the following Proposition to describe the McKay quiver of finite group $G$ in $\mathrm{SL}(m+1,\mathbb C)$ obtained by embed $\mathrm{SL}(m,\mathbb C)$ into  $\mathrm{SL}(m+1,\mathbb C)$.
\begin{pro}\label{embedmcq}
Let $G$ be a finite subgroup of $\mathrm{SL}(m,\mathbb C)$ with McKay quiver $\tQ^{m}(G)$.
Then there is an embedding of $G$ into  $\mathrm{SL}(m+1,\mathbb C)$ such that the McKay quiver $\tQ^{m+1}(G)$ of $G$, as a subgroup of $\mathrm{SL}(m+1,\mathbb C)$, is obtained from $\tQ^{m}(G)$ by adding a loop at each vertex of $\tQ^{m}(G)$.
\end{pro}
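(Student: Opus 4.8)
The plan is to deduce this directly from Proposition~3.1 of \cite{g11}, exactly as the statement advertises. That result describes how the McKay quiver changes when one passes from a finite subgroup $H\subset\mathrm{GL}(m,\mathbb C)$ to its image in $\mathrm{SL}(m+1,\mathbb C)$ under the embedding $h\mapsto\diag(h,\det(h)^{-1})$: the new McKay quiver is obtained from the old one by adding, for each vertex $i$, one arrow from $i$ to $\sigma(i)$, where $\sigma$ is the Nakayama permutation of the McKay quiver of $H\subset\mathrm{GL}(m,\mathbb C)$ (equivalently, the permutation of the irreducible representations induced by tensoring with the one-dimensional representation $\bigwedge^{m}V$).

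First I would regard the given $G\subset\mathrm{SL}(m,\mathbb C)$ simply as a subgroup of $\mathrm{GL}(m,\mathbb C)$, so that $\tQ^{m}(G)$ is its McKay quiver there, and record two bookkeeping facts. Since $G\subset\mathrm{SL}(m,\mathbb C)$, the one-dimensional representation $\bigwedge^{m}V$ is trivial, so the Nakayama permutation $\sigma$ is the identity; this is precisely the remark that the Nakayama permutation of the McKay quiver of a subgroup of a special linear group is trivial. Also $\det(g)^{-1}=1$ for all $g\in G$, so the embedding $h\mapsto\diag(h,\det(h)^{-1})$ becomes $g\mapsto\diag(g,1)$, which visibly lands in $\mathrm{SL}(m+1,\mathbb C)$. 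Applying Proposition~3.1 of \cite{g11} to this situation then shows that $\tQ^{m+1}(G)$ is $\tQ^{m}(G)$ with one arrow from $i$ to $\sigma(i)=i$ added for each vertex $i$, that is, one loop at each vertex, which is the claim. As a consistency check one may argue directly from the fusion rule: the new representation is $V'=V\oplus\mathbf{1}$ with $\mathbf{1}$ the trivial representation, and $V'\otimes S_i=(V\otimes S_i)\oplus S_i$ for every irreducible $S_i$, so the arrow multiplicity matrix picks up exactly the identity matrix.

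There is no genuine obstacle here, since all the substance is carried by the cited proposition; the only points needing care are the two observations above --- that $g\mapsto\diag(g,1)$ stays inside $\mathrm{SL}(m+1,\mathbb C)$ and that the Nakayama permutation is trivial for $G\subset\mathrm{SL}(m,\mathbb C)$ --- together with correctly matching the orientation conventions of \cite{g11} for the direction of the added arrows.
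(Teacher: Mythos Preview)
Your proposal is correct and matches the paper's approach exactly: the paper states the proposition as ``a direct consequence of Proposition~3.1 of \cite{g11}'' without further argument, and you have simply spelled out that deduction, noting that the embedding $g\mapsto\diag(g,1)$ lands in $\mathrm{SL}(m+1,\mathbb C)$ and that the Nakayama permutation is trivial for subgroups of the special linear group (a fact the paper also records just before Proposition~\ref{embedmcq}). Your additional consistency check via the fusion rule $V'\otimes S_i=(V\otimes S_i)\oplus S_i$ is a nice elementary verification but not required.
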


Let $V$ be an $(n+1)$ dimensional vector space over $k$.
let $k[V]= k[x_0, x_1, \cdots, x_n]$ be the polynomial algebra of $n+1$ variables over $k$ and $ \wedge V$   be exterior algebra of $V$.
In fact, both  $k[V]$ and $ \wedge V$ are quotient algebras of the tensor algebra $T_k(V)$ of $V$ over quadratic ideals, and they are quadratic dual each other.
Let $G$ be a finite subgroup  of $\mathrm{GL}(V) \simeq \mathrm{GL}(n+1,\mathbb C)$.
The following is the Theorem 1.8 of \cite{gm02}.
\begin{thm}\label{skewMcKay}
Let $\tQ$ be the McKay quiver of $G$ in $\mathrm{GL}(V)$, the skew group algebra $T_k(V)*G$ is Morita equivalent to $k\tQ$.
\end{thm}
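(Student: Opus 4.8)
The plan is to realise $T_k(V)*G$ as a tensor algebra over the semisimple ring $kG$ and then read off its Gabriel quiver. First I would note that $T_k(V)*G$ carries a non-negative grading with degree-zero part $kG$ and degree-$r$ part $V^{\otimes r}\otimes_k kG$, and that comparing products gives an isomorphism of graded algebras $T_k(V)*G \cong T_{kG}(M)$, where $M := V\otimes_k kG$ is the $kG$-bimodule with left action $g\cdot(v\otimes h) = (gv)\otimes(gh)$ and right action $(v\otimes h)\cdot g = v\otimes(hg)$, and $T_{kG}(M) = \bigoplus_{r\ge 0} M^{\otimes_{kG} r}$. The essential feature of this step is that no relations beyond those intrinsic to a tensor algebra appear, since $T_k(V)$ is free.

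Second, since $k$ has characteristic $0$ and $G$ is finite, $kG$ is semisimple. I would pick orthogonal primitive idempotents $f_1,\dots,f_l \in kG$ with $kGf_i \cong S_i$, set $f = f_1+\cdots+f_l$, and observe that $f$ is full in $T_k(V)*G$ (it is already full in $kG = A_0$ because $A_0 f A_0$ meets every Wedderburn block, and $A=T_k(V)*G$ is generated over $A_0$ by the $kG$-bimodule $A_1$). Hence $T_k(V)*G$ is Morita equivalent to $f\big(T_k(V)*G\big)f$, and since $f$ is homogeneous of degree $0$ one has $f\big(T_k(V)*G\big)f = T_{fkGf}(fMf)$, using $f M^{\otimes_{kG} r} f \cong (fMf)^{\otimes_{fkGf} r}$. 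Now $fkGf \cong k^l$, one copy of $k$ for each vertex, and a tensor algebra $T_{k^l}(N)$ is precisely the path algebra (with no relations) of the quiver with $l$ vertices and $\dmn_k f_jNf_i$ arrows from $i$ to $j$; so $T_k(V)*G$ is Morita equivalent to $kQ'$, where $Q'$ has $\dmn_k f_jMf_i$ arrows $i\to j$.

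Third, I would identify $Q'$ with the McKay quiver. From $kGf_i \cong S_i$ we get $Mf_i \cong M\otimes_{kG} kGf_i \cong M\otimes_{kG} S_i$, and a direct computation shows $M\otimes_{kG} S_i = (V\otimes_k kG)\otimes_{kG} S_i \cong V\otimes_k S_i$ with the \emph{diagonal} $G$-action (the map $v\otimes s \mapsto (v\otimes 1)\otimes s$ is an isomorphism, and $g\cdot((v\otimes 1)\otimes s)=((gv)\otimes g)\otimes s=((gv)\otimes 1)\otimes gs$). Therefore
\[
\dmn_k f_jMf_i = \dmn_k f_j(V\otimes_k S_i) = \dmn_k \Hom_{kG}(S_j,\, V\otimes_k S_i) = a_{i,j},
\]
the second equality because $\dmn_k f_jN = \dmn_k\Hom_{kG}(kGf_j,N)$ and $kGf_j\cong S_j$, and the third by the defining decomposition $V\otimes_k S_i = \bigoplus_j S_j^{a_{i,j}}$ together with $\End_{kG}(S_j) = k$ ($k$ algebraically closed). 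Hence $Q' = \tQ(G)$ and $T_k(V)*G$ is Morita equivalent to $k\tQ$.

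I expect the main obstacle to be the bookkeeping around the twisted bimodule $M$: verifying carefully the graded isomorphism $T_k(V)*G \cong T_{kG}(M)$ and, above all, the identification $M\otimes_{kG} S_i \cong V\otimes_k S_i$ with the diagonal action, on which the correct orientation of the arrows rests. A secondary technical point is justifying $f\,T_{kG}(M)\,f = T_{fkGf}(fMf)$ and that Morita equivalence via the full homogeneous idempotent $f$ behaves well although the algebras are infinite-dimensional; both reduce to standard facts about fullness and the compatibility of $f(-)f$ with $\otimes_{kG}$.
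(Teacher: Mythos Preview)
Your proof is correct. The paper does not actually prove this statement: it is stated without proof as a citation (``The following is the Theorem 1.8 of \cite{gm02}''), so there is no in-paper argument to compare against. Your approach---rewriting $T_k(V)*G$ as the tensor algebra $T_{kG}(V\otimes_k kG)$ over the semisimple base $kG$, cutting by a full homogeneous idempotent $f$ hitting each simple once to obtain $T_{k^l}(fMf)$, and then identifying $\dim_k f_jMf_i$ with the McKay multiplicity $a_{i,j}$ via $Mf_i\cong V\otimes_k S_i$ with the diagonal action---is the standard route and all steps are sound; in particular the identity $f\,M^{\otimes_{kG} r}f\cong (fMf)^{\otimes_{fkGf} r}$ follows from the isomorphism $Rf\otimes_{fRf}fR\xrightarrow{\sim} R$ valid for any full idempotent, and fullness of $f$ in $A$ follows immediately from $A_0 f A_0=A_0$ since $A$ is generated over $A_0$.
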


Let $k[V]*G$ and $ \wedge V *G$ be the skew group algebras of $G$ over $k[V]$ and over $ \wedge V$, respectively.
Let $\tG = \tG(G)$ be the basic algebra Morita equivalent to $ k[V]*G$ and let $\tL= \tL(G) $ be the basic algebra Morita equivalent to $ \wedge V*G$.
By Theorem \ref{skewMcKay}, the quivers of both $\tG$ and $\tL$ are the McKay quiver $\tQ=\tQ(G)$ of $G$ in $\mathrm{GL}(n+1,\mathbb C)$.

Note that $ \wedge V$ is the Yoneda algebra of $k[V]$,  and both are Koszul algebra.
By Theorem 10 of \cite{m01}, $ \wedge V *G$  is the Yoneda algebra of $k[V]*G$, and by Theorem 14  of \cite{m01}, they are both Koszul.
This shows that $\tG$ and $\tL$ are Yoneda algebras from each other, so they are quadratic dual of each other. By Lemma 13 of \cite{m01}, $ k[V] *G$ is an AS-regular algebra(called generalized Auslander algebra there) and $ \wedge V *G$ is a self-injective algebra.
By choosing a quadratic relation $\trho= \trho(G)$ of $\tQ(G)$ such that $\tL \simeq  k\tQ/(\trho)$.
The following Proposition follows from the definition of $n$-translation algebra.
\begin{pro}\label{mckpair}
Let $G$ be the finite subgroup of $\mathrm{GL}(n+1,\mathbb C)$, then its McKay quiver  $\tQ(G) = (\tQ_0,\tQ_1,\trho)$ is an $n$-translation quiver and $\tL(G)$ is a Koszul stable $n$-translation algebra.
\end{pro}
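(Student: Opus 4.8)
The plan is to read off every property claimed for $\tL(G)$ directly from what has already been assembled about $\wedge V$ and $\wedge V*G$, and then to quote the two definitions in play. First I would fix the grading. Since $\dim_{\mathbb C}V=n+1$, the exterior algebra decomposes as $\wedge V=\bigoplus_{t=0}^{n+1}\wedge^{t}V$ with $\wedge^{n+1}V\cong k$, with $\wedge^{t}V=0$ for $t>n+1$, and it is generated in degrees $0$ and $1$. As $G\subset\mathrm{GL}(V)$ acts on $V$ by degree-$1$ automorphisms, it acts on $\wedge V$ preserving this grading, so the skew group algebra $\wedge V*G$ is a graded algebra with $(\wedge V*G)_t=\wedge^{t}V\otimes_k kG$; in particular it is generated in degrees $0$ and $1$, vanishes in degrees $>n+1$, and, being self-injective with $\wedge^{n+1}V\neq0$, is nonzero in degree $n+1$. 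The Morita equivalent basic algebra $\tL=\tL(G)$ therefore inherits a grading, which is precisely the path-length grading of $k\tQ(G)/(\trho)$ arising from Theorem \ref{skewMcKay} and the choice of the quadratic relation set $\trho$; it is concentrated in degrees $0,\dots,n+1$ with $\tL_{n+1}\neq0$, i.e. $\tL$ has Loewy length $n+2$.

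Next I would collect the homological input that has already been stated: $\wedge V*G$ is self-injective (Lemma 13 of \cite{m01}) and Koszul (Theorem 14 of \cite{m01}), hence so is its basic algebra $\tL$. Now I would match this against the definitions in Section \ref{sec:pre}: by the definition of a $(p,q)$-Koszul algebra with $q=\infty$, a Koszul algebra $\tL$ with $\tL_t=0$ for $t>n+1$ is exactly an $(n+1,\infty)$-Koszul algebra, since for $q=\infty$ the clause ``$P^{t}$ is generated in degree $t$ for $t\le q$'' is just Koszulness and the clause on $\ker f_q$ is vacuous. Consequently $\tL$ is an $(n+1,\infty)$-Koszul self-injective algebra, which by definition is a stable $n$-translation algebra of Coxeter index $q=\infty$; being Koszul, it is a Koszul stable $n$-translation algebra.

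For the quiver statement I would use that the bound quivers of graded self-injective algebras are exactly the stable $n$-translation quivers of Loewy length $n+2$, with the Nakayama permutation as their $n$-translation, as recalled in Section \ref{sec:pre}. Since $\tL(G)=k\tQ(G)/(\trho)$ is such an algebra and, by Theorem \ref{skewMcKay}, its Gabriel quiver is the McKay quiver $\tQ(G)$, the bound quiver $\tQ(G)=(\tQ_0,\tQ_1,\trho)$ is a stable $n$-translation quiver; in particular it is an $n$-translation quiver. The $n$-translation is induced by the action of $G$ on $\wedge^{n+1}V\cong k$, i.e. by twisting the irreducible representations of $G$ by the determinant character, and it is trivial exactly when $G\subset\mathrm{SL}(V)$.

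The only genuinely non-formal point, and the step I expect to be the main obstacle, is the precise matching of the two definitions: verifying that ``Koszul $+$ self-injective $+$ Loewy length $n+2$'' is literally the $q=\infty$ instance of ``$(n+1,q)$-Koszul self-injective'', i.e. that in the definition of a $(p,q)$-Koszul algebra the degree condition on $\ker f_q$ imposes nothing when $q=\infty$, so that no extra property of the Koszul resolution of $\tL_0$ needs to be checked. Everything else is bookkeeping: transporting the grading through the Morita equivalence (so that the McKay quiver really is the Gabriel quiver with quadratic relations) and quoting the cited results of \cite{m01} and \cite{gm02} on skew group algebras over $k[V]$ and over $\wedge V$.
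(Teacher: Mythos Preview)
Your proposal is correct and follows essentially the same approach as the paper, which in fact gives no proof beyond the single sentence ``The following Proposition follows from the definition of $n$-translation algebra'' placed immediately after recalling the cited results of \cite{m01} and \cite{gm02}. You have simply unpacked that sentence: the Koszulity and self-injectivity of $\wedge V*G$ from \cite{m01}, the Loewy length $n+2$ from $\dim V=n+1$, the identification of Koszul with $(n+1,\infty)$-Koszul, and the fact (recalled in Section \ref{sec:pre}) that the bound quiver of a graded self-injective algebra of Loewy length $n+2$ is a stable $n$-translation quiver---precisely the verification the paper leaves implicit.
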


Starting with a finite group  $G\subseteq \mathrm{GL}(n+1,\mathbb C)$, we have a pair of bound quivers $\tQ(G)$ and $\tQ^{\perp}(G)$ associates to the McKay quiver of $G$.
The bound quivers $\tQ(G)$ defines a stable $n$-translation algebra $\tL(G)$ and $\tQ^{\perp}(G)$ defines an AS-regular algebra $\tG(G)=k\tQ/(\rho^{\perp})$.
We call $\tL(G)$ a {\em stable $n$-translation algebra associated to $\tQ(G)$} and $\tG(G)$ an {\em AS-regular algebra associated to $\tQ(G)$}.

Let $Q^N(G)=\zZ_{\vv}\tQ[0,n]$, then by Theorem 5.1 of \cite{g12}, the algebra $\LL^N$ defined by the bound quiver $Q^N$ is the Beilinson algebra of $\tL(G)$.
Let $\tL(G)\#'k\zZ^*$ be the smash product of $\tL(G)$ with respect to the usual gradation (see \cite{g16}).
Write $\widehat{\LL}$ for the repetitive algebra of an algebra $\LL$ of finite global dimension, then by Theorem 5.12 of \cite{g12}, we have $\tL(G)\#'k\zZ^* \simeq \widehat{\LL^N}$.

Let $Q = Q(G)$ be a connected component of a complete $\tau$-slice of $\zZ_{\vv} \tQ$, then $Q$ is a quadratic $n$-nicely-graded quiver.
Let $Q^{\perp}$ be its quadratic dual quiver, $Q^{\perp}$ is a nicely-graded $n$-slice.
We have that $\zzs{n-1}Q(G)$ is a connected component of $\zZ_{\vv} \tQ(G)$, and use the notations of the vertices and arrows in $\zZ_{\vv} \tQ(G)$ for the vertices and arrows in $\zzs{n-1}Q(G)$.
Let $Q'$ be the full subquiver of $\zzs{n-1}Q(G)$ defined by the vertex set $\{(i,m)\in (\zzs{n-1}Q(G))_0 |0\le m \le n \}$.
Then $Q'$ is a complete $\tau$-slice in $\zzs{n-1}Q(G)$ and $Q$ is obtained by a sequences of $\tau$-mutations from $Q'$.
Let $\LL'$ be the $\tau$-slice algebra defined by $Q'$, then by Theorem 6.8, $\Delta(\LL') \simeq \Delta(\LL) =\Delta(G)$.
So the bound quiver of $\Delta(G)$ is the returning arrow quiver $\tQ'$ of $Q'$ whose vertex set is indexed as $$\{(i,\bar{m})| (i,m)\in \zzs{n-1}Q(G), \bar{m} \in \zZ/(n+1)\zZ\}.$$
For a path $p= \xa_l\cdots \xa_l \in \tQ_l(G)$ and $\bam\in \zZ/(n+1)\zZ $, write $p[\bam]$ for the path $ (\xa_l,\overline{m+l-1}) \cdots (\xa_1,\bam)$ of length $l$ in $\Delta(G)$.
Then we have that $\sum_{p,\bam} a_{p,\bam} p[\bam] =0$ if and only if $\sum_{p,\bam=\bam'} a_{p,\bam} p =0$ for all $\bam'\in \zZ/(n+1)\zZ$.

\begin{pro}\label{mcknngtm}
$\GG(G)= kQ^{\perp}/(\rho^{\perp})$ is a tame $n$-slice algebra.
\end{pro}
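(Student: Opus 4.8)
The plan is to establish the two halves of the assertion separately: that $\GG(G)$ is an $n$-slice algebra, and that it is of tame type in the sense of Section~\ref{sec:class}, the latter via Proposition~\ref{cls_tame}.

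\textbf{$\GG(G)$ is an $n$-slice algebra.} By Proposition~\ref{mckpair}, $\tL(G)$ is a Koszul stable $n$-translation algebra, so its quiver $\tQ(G)$ is quadratic; hence $\zZ_{\vv}\tQ(G)$ and all of its complete $\tau$-slices are quadratic, and by Proposition~\ref{zquiversslice} the connected component $Q=Q(G)$ is a finite acyclic quadratic $n$-nicely-graded quiver. Thus $Q^{\perp}$ is an $n$-slice and $\LL=kQ/(\rho)=\GG(G)^{!,op}$ is the associated $n$-properly-graded algebra; it remains to see that the trivial extension $\dtL$ is a stable $n$-translation algebra. As recalled before the statement, $\dtL\simeq\Delta(\LL')=\Delta(G)$, whose bound quiver is the returning arrow quiver $\tQ'$ of the initial $\tau$-slice $Q'$ of $\zzs{n-1}Q(G)$. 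Now $\zzs{n-1}Q(G)$ is a connected component of $\zZ_{\vv}\tQ(G)$, hence a connected covering of $\tQ(G)$; identifying its $\zZ$-grading modulo $n+1$ produces exactly $\tQ'$, and since $n+1$ is a multiple of the number $d$ of connected components of $\zZ_{\vv}\tQ(G)$ (a cycle of length $n+1$ runs through every vertex of $\tQ(G)$, coming from the Koszul complex of $V$), the quiver morphism $\tQ'\to\tQ(G)$ is a finite Galois covering. Transporting the quadratic relations of $\tL(G)$ along it, we see that $\Delta(G)$ is a finite Galois covering of the Koszul stable $n$-translation algebra $\tL(G)$; as being an $(n+1,\infty)$-Koszul self-injective algebra is preserved under finite coverings, $\dtL=\Delta(G)$ is a stable $n$-translation algebra and $\GG(G)$ is an $n$-slice algebra.

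\textbf{$\GG(G)$ is of tame type.} Let $\tG=\Pi(\GG(G))$ be the $(n+1)$-preprojective algebra of $\GG(G)$. By \cite{g20}, $\tG^{!,op}\simeq\dtnL$, a twisted trivial extension of $\LL$ with the same bound quiver $\tQ'$ as $\dtL$; reversing the argument above, $\tG=\dtnL^{!,op}$ is a finite Galois covering of $\tL(G)^{!,op}=\tG(G)$, the basic algebra of the skew group algebra $k[V]*G$. Since $k$ is Noetherian, $k[V]=k[x_0,\cdots,x_n]$ is Noetherian, hence so is $k[V]*G$ over the finite group $G$, and therefore so is its basic algebra $\tG(G)$; moreover $\tG(G)$ is infinite dimensional of Gelfand-Kirilov dimension $n+1$. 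A finite Galois covering of a Noetherian infinite-dimensional algebra is again Noetherian of infinite dimension, so $\tG$ is Noetherian of infinite dimension, and Proposition~\ref{cls_tame} yields that $\GG(G)$ is tame.

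\textbf{Main difficulty and an alternative.} The delicate point is the bookkeeping in the first step: pinning down $\tQ'$ as the quotient of $\zzs{n-1}Q(G)$ by the $(n+1)$st power of the $n$-translation, checking that this yields a \emph{finite} covering of $\tQ(G)$, and verifying that $(n+1,\infty)$-Koszulity and self-injectivity — and, for the second step, Noetherianity — pass to finite coverings; the constructions needed are all in Sections~\ref{sec:pre}--\ref{sec:mckayq} and in \cite{g11,g12,g16,g20}. If one prefers to avoid Noetherianity of the covering, one can instead note that each multiplicity matrix $A_t(\dtnL)$ is a nonnegative covering blow-up of $A_t(\tL(G))$, so the Loewy matrices have the same spectral radius by a Perron--Frobenius argument (the Perron vector of $L(\tL(G))$ lifts, and the fibrewise sum of a nonnegative eigenvector of $L(\dtnL)$ descends); since $\tG(G)=\tL(G)^{!,op}$ is Noetherian, Theorem~\ref{Noe_sptr} gives $\varrho(L(\tL(G)))=1$, hence $\varrho(L(\dtnL))=1$, while the Coxeter index is infinite because $\tL(G)$ is Koszul, so Theorem~\ref{cls_usen-tra}(2) again yields tameness.
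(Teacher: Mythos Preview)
Your outline is essentially correct and rests on the same idea as the paper --- relate $\Delta(G)$ to $\tL(G)$ through the $\zZ/(n{+}1)\zZ$-indexed correspondence $(i,\bam)\leftrightarrow i$ set up just before the proposition --- but the paper carries this out concretely rather than by invoking abstract preservation-under-covering statements. In place of your assertion that $(n{+}1,\infty)$-Koszulity survives finite coverings, the paper takes a minimal projective resolution $\cdots\to\tP^t\to\cdots\to\tP^0\to\tL_0(G)e_i\to 0$ of each simple $\tL(G)$-module, with $\tP^t=\bigoplus_s\tL(G)e_{i_{t,s}}$ and differential matrices $C_t$, and lifts it verbatim to $\Delta(G)$ by setting $\tP^t[\bam]=\bigoplus_s\Delta(G)e_{i_{t,s},\overline{m+t}}$ with differential matrices $C_t[\bam]$; the key line ``$\sum_{p,\bam}a_{p,\bam}p[\bam]=0$ iff $\sum_{p,\bam=\bam'}a_{p,\bam}p=0$ for all $\bam'$'' is exactly what makes exactness of one equivalent to exactness of the other. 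Koszulity of $\Delta(G)$ and equality of complexities of the corresponding simples then drop out of the same comparison.

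For tameness, the paper stays on the $\tL$-side: Noetherianity of $\tG(G)$ (basic algebra of $k[V]*G$) gives $\varrho(L(\tL(G)))=1$ by Theorem~\ref{Noe_sptr}, hence finite complexity of $\tL(G)$ by Theorem~\ref{cplloewy}, hence finite complexity of $\Delta(G)$ by the resolution comparison, hence tameness by Theorem~\ref{cls_usen-tra}. This is precisely your \emph{alternative} argument, phrased through complexity rather than spectral radius directly. Your \emph{main} route --- pushing Noetherianity up to $\Pi(\GG(G))$ and quoting Proposition~\ref{cls_tame} --- is a valid shortcut (a module-finite extension of a Noetherian ring is Noetherian), but it is not the paper's path and requires an ingredient outside the paper's toolkit. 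One small caution on your covering setup: the claim that $\tQ'\to\tQ(G)$ is a genuine Galois covering with group $\zZ/(n{+}1)\zZ$ uses $d\mid(n{+}1)$, which is clear for $G\subset\mathrm{SL}(n{+}1,\mathbb C)$ (trivial Nakayama gives length-$(n{+}1)$ cycles at every vertex) but needs more care for general $G\subset\mathrm{GL}(n{+}1,\mathbb C)$; the paper avoids this issue by working directly with the explicit index $\bam$ rather than appealing to Galois covering formalism.
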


\begin{proof}
For any $i\in \tQ(G)$, we have a minimal projective resolution
\eqqc{projtLGi}{\slrw{} \tP^t \slrw{f_t} \tP^{t-1}  \slrw{f_{t-1}} \cdots  \tP^1 \slrw{f_1} \tP^{0} \lrw \tL_0(G) e_i \lrw 0 }
for the simple $\tL(G)$-module $\tL_0(G) e_i$, such $\tP^t$ is generated in degree $t$.
We may assume that $\tP^t = \bigoplus_{s=1}^{h_t} \tL(G) e_{i_{t,s}}$ for a sequence $I_t =(i_{t,1}, \ldots, i_{t,h_t})$ of vertices in $\tQ_0(G)$.
Then $f_t$ is presented as a matrix $C_t=(c_{u,v}\sk{t})$ of size $h_{t}\times h_{t-1}$.
We have that elements $c_{u,v}\sk{t}$ of $C_t$ are all in $\sum_{s\ge 1}\tL_s(G)$ for $1\le u \le h_{t-1}, 1\le v \le h_t$.

Clearly \eqref{projtLGi} is exact if and only if $C_{t}C_{t+1} =0$ and if $(d_1,\cdots, d_{h_{t}})C_t =0$ for some $(d_1,\cdots, d_{h_{t}}) \in \bigoplus_{s=1}^{h_{t}} \tL(G) e_{i_{t,s}}$, then there is $(d'_1, \cdots, d'_{h_{t+1}})\in \bigoplus_{s=1}^{h_{t+1}} \tL(G) e_{i_{t+1,s}}$ such that \eqqcn{}{(d_1,\cdots, d_{h_{t}}) =(d'_1, \cdots, d'_{h_{t+1}})C_{t+1} }

Now for any $(i,\bam)\in \Delta(G)$, define $\tP^t[\bam] =  \bigoplus_{s=1}^{h_t} \Delta(G) e_{i_{t,s},\overline{m+t}}$, and let $\tilde{f_t}[\bam]$ be the map defined by the $h_{t}\times h_{t-1}$ matrix $C_t[\bam] = (c_{u,v}\sk{t}[\overline{m+t}])$.
Then we get immediately a sequence
\eqqc{projtLGizvvm}{\cdots \slrw{} \tP^t[\bam] \slrw{\tilde{f}_t[\bam]} \tP^{t-1}[\bam]  \slrw{\tilde{f}_{t-1}[\bam]} \cdots  \tP^0[\bam] \slrw{\tilde{f}_1[\bam]} \tP^{0} \lrw \Delta(G) e_{i,\bam} \lrw 0.}
By comparing the matrices defining the sequences \eqref{projtLGi} and \eqref{projtLGizvvm}, we see that  \eqref{projtLGi} is exact if and only if \eqref{projtLGizvvm} is exact.
That is, \eqref{projtLGi} is a projective resolution of simple $\tL(G)$-module $\tL_0(G) e_i$ if and only if  \eqref{projtLGizvvm} is a projective resolution of simple $\Delta(G)$-module $\Delta_0(G) e_{i,\bam}$ for any $\bam \in \zZ/(n+1)\zZ$.

Since $\tL(G)$ is Koszul, $P^t $ is generated in degree $t$, so $c_{u,v}\sk{t} \in \tL_1(G)$ and hence $c_{u,v}\sk{t}[\overline{m+t}] \in \Delta_1(G)$ and $\tP^t[\bam] $ is generated in degree $t$, for all $(i,\bam)$.
This shows that $\Delta(G)$ is Koszul and $\tG(G)$ is an $n$-slice algebra.

By comparing \eqref{projtLGi} and \eqref{projtLGizvvm}, the simple modules $\Delta_0(G) e_{i,\bam}$ has the same complexity as $\tL_0(G) e_i$.
By Theorem \ref{Noe_sptr} and Theorem \ref{cplloewy}, $\tL(G)$ is of finite complexity,  so  $\Delta(G)$ has finite complexity.
So by Theorem \ref{cls_usen-tra}, $\GG(G)$ is a tame $n$-slice algebra.
\end{proof}

We call $\GG(G)$ an {\em $n$-slice algebra associated to the McKay quiver $\tQ(G)$}.

It is interesting to know if the converse of Proposition \ref{mcknngtm} is true, that is, for an indecomposable nicely graded tame $n$-slice algebra $\GG$, if there is a finite subgroup $G \subset \mathrm{GL}(n+1,\mathbb C)$, such that $\GG\simeq \GG(G)$ for some connected component $Q$ of a complete $\tau$-slice in $\zZ_{\vv} \tQ(G)$?
The quiver $Q(G)$ is not unique by the definition, but they can be related by a sequences of $\tau$-mutations.
Thus $n$-slice algebra $\GG(G)$ associated to the McKay quiver of $G$ is not unique, and such algebras are related by a sequence of $n$-APR tilts (see \cite{gx20}).
Let $\GG^N = \LL^{N,!,op}$ be the quadratic dual of $\LL^N$.
We remark that $\GG(G)$ can be obtained from a direct summand of $\GG^N$ by a sequence of $n$-APR tilts.

\medskip

For the three pairs of quadratic duals of Algebras: $(\tL(G),\tG(G))$, $(\widehat{\LL^N},\widehat{\GG^N})$ and $(\LL^N, \GG^N)$.
We can construct $\widehat{\LL^N}$ from $\tL(G)$ by smash product, and $\LL^N$ from $\widehat{\LL^N}$ by taking $\tau$-slice algebra, we don't know how to get back $\tL(G)$ from $\LL(G)$.

For an AS-regular algebra $\tG$, denote by $\mathrm{qgr} \tG$ the non-commutative projective scheme of $\tG$ (see \cite{az94}).
We have the following equivalences of triangulate categories related to McKay quiver.

\begin{thm}\label{mckequi}
The following categories are equivalent as triangulate categories:
\begin{enumerate}
\item  the bounded derived category $\mathcal D^b( \mathrm{qgr} \tG(G)) $ of the non-commutative projective scheme of $\tG(G)$,
\item  the bounded derived category $\mathcal D^b(\GG^N) $ of the finitely generated $\GG^N$-modules,
\item  the bounded derived category $\mathcal D^b(\LL^N) $ of the finitely generated $\LL^N$-modules,
\item  the stable category $\underline{\mathrm{grmod}} \tL(G)$ of finitely generated graded $\tL(G)$-modules,
\item the stable category $\underline{\mathrm{mod}} \widehat{\LL^N}  $ of finitely generated $\widehat{\LL^N} $-modules,

\item the stable category $\underline{\mathrm{mod}} \widehat{\GG^N}  $ of finitely generated $\widehat{\GG^N} $-modules.
\end{enumerate}
If the lengths of the oriented circles in $\tQ(G)$ is coprime, they are also equivalent to the following triangulate categories.
\begin{enumerate}
\item[(7)] the bounded derived category $\mathcal D^b( \mathrm{qgr} \Pi(G)) $ of the non-commutative projective scheme of the $(n+1)$-preprojective algebra $\Pi(G)$ of $\GG(G)$,

\item[(8)]  the stable category $\underline{\mathrm{grmod}} \Delta_{\nu}(G)$ of finitely generated graded modules over the (twisted) trivial extension $\Delta_{\nu}(G)$ of $\LL(G)$.
\end{enumerate}
\end{thm}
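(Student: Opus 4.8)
The plan is to assemble the chain of equivalences from known "building block" equivalences, tracing a path through the diagram of algebras in \eqref{depictingalg} applied to the data coming from the McKay quiver. The backbone is a higher Beilinson--Bondal--type correspondence together with the higher BGG correspondence. First I would establish the equivalence $(1)\Leftrightarrow(3)$: since $\tG(G)$ is a Noetherian AS-regular algebra of Gelfand--Kirilov dimension $n+1$ (it is Morita equivalent to $k[V]\ast G$), the non-commutative projective scheme $\mathrm{qgr}\,\tG(G)$ has a tilting object whose endomorphism algebra is the Beilinson algebra $\LL^N$ of $\tL(G)$; this is exactly Theorem 5.1 of \cite{g12} combined with the standard argument that $\mathcal D^b(\mathrm{qgr}\,\tG(G))\simeq \mathcal D^b(\mathrm{End}(T))$ for such a tilting object $T$. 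Next, $(2)\Leftrightarrow(3)$ is a Koszul duality statement: $\GG^N = \LL^{N,!,op}$ and both are finite-dimensional algebras of finite global dimension, so their bounded derived categories are equivalent (the numerically Koszul / BGG equivalence, e.g.\ via the functor $\RHom$ against the Koszul bimodule complex).

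**Connecting to the stable categories.** For $(3)\Leftrightarrow(4)$, I would use that $\tL(G)$ is a Koszul self-injective $n$-translation algebra, so by the smash-product description recalled in Section \ref{sec:pre} (and Happel-type results), the stable category $\udgrm\,\tL(G)$ of graded modules is equivalent to $\mathcal D^b$ of the Beilinson algebra $\LL^N$; concretely $\tL(G)\#'k\zZ^*\simeq\widehat{\LL^N}$ (Theorem 5.12 of \cite{g12}), and $\udgrm\,\tL(G)\simeq \underline{\mathrm{mod}}\,\widehat{\LL^N}$, giving simultaneously $(4)\Leftrightarrow(5)$. Then $(5)\Leftrightarrow(6)$ is once more Koszul/quadratic duality at the level of repetitive algebras: $\widehat{\GG^N}$ is the quadratic dual of $\widehat{\LL^N}$ in the sense of picture \eqref{depictingalg}, and for self-injective Koszul algebras the stable module categories of an algebra and its "repetitive quadratic dual" are equivalent (Happel's theorem identifies $\underline{\mathrm{mod}}\,\widehat{\LL^N}$ with $\mathcal D^b(\LL^N)$ and likewise for $\GG^N$, so $(6)\Leftrightarrow(2)$ closes that loop too). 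Finally, $(3)\Leftrightarrow(5)$ is Happel's equivalence $\mathcal D^b(\LL^N)\simeq\underline{\mathrm{mod}}\,\widehat{\LL^N}$ for the finite-global-dimension algebra $\LL^N$. At this point items $(1)$ through $(6)$ are all linked.

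**The coprimeness clause.** For $(7)$ and $(8)$ I would invoke Proposition \ref{zquivers}(1): the number of connected components of $\zZ_\vv\tQ$ equals the gcd $d$ of the lengths of the oriented cycles in $\tQ(G)$, so coprimeness means $d=1$, i.e.\ $\zZ_\vv\tQ(G)$ is connected and a single complete $\tau$-slice $Q=Q(G)$ already carries all the information, with $\zzs{n-1}Q(G)=\zZ_\vv\tQ(G)$. Under this hypothesis the $n$-slice algebra $\GG(G)=kQ^\perp/(\rho^\perp)$ is well-defined up to $n$-APR tilt, its $(n+1)$-preprojective algebra is $\Pi(G)$, and by Theorem 6.6 of \cite{g20} the twisted trivial extension $\Delta_\nu(G)\simeq\Pi(G)^{!,op}$. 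Then $(7)\Leftrightarrow(8)$ is again the AS-regular / self-injective Koszul duality pairing $(\Pi(G),\Delta_\nu(G))$, exactly as for $(1)\Leftrightarrow(4)$, and $(8)$ connects back to $(4)$ because $\Delta_\nu(G)$ and $\tL(G)$ are stably equivalent graded self-injective algebras (they differ only by the twist $\nu$ and by passing from the $n$-properly-graded quiver to a $\tau$-slice of $\zZ_\vv\tQ$, which by $d=1$ loses nothing). I expect the main obstacle to be bookkeeping rather than conceptual: one must check the gradings match up correctly under each smash-product / Koszul-dual step and that the tilting objects genuinely have the claimed endomorphism algebras (the AS-regularity and Noetherianity of $\tG(G)$, needed to apply the Artin--Zhang machinery for $\mathrm{qgr}$, follow from Lemma 13 of \cite{m01}), and in the coprime case one must verify carefully that connectedness of $\zZ_\vv\tQ(G)$ is exactly what is needed to identify $\zzs{n-1}Q(G)$ with $\zZ_\vv\tQ(G)$ so that $\Pi(G)$ and $\Delta_\nu(G)$ are the correct global (not just local) algebras.
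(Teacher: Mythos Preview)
Your overall strategy---assembling the chain from the Beilinson correspondence, Koszul duality, and Happel's theorem---is the same as the paper's, and most of the links are correctly identified. Two steps need repair, though.

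First, in your $(1)\Leftrightarrow(3)$ step you claim the tilting object in $\mathrm{qgr}\,\tG(G)$ has endomorphism algebra $\LL^N$, citing Theorem~5.1 of \cite{g12}. That theorem only identifies $\LL^N$ as the Beilinson algebra of the \emph{self-injective} algebra $\tL(G)$; it says nothing about a tilting object on the noncommutative projective scheme of $\tG(G)$. The Minamoto--Mori equivalence (Theorem~4.14 of \cite{mm11}) for an AS-regular algebra $A$ produces the Beilinson algebra $\nabla A$, which for $A=\tG(G)$ is $\GG^N$, not $\LL^N$. So the paper first proves $(1)\Leftrightarrow(2)$ via \cite{mm11}, then $(2)\Leftrightarrow(3)$ via \cite{bgs96}, and $(3)\Leftrightarrow(4)$ directly from Chen's Corollary~1.2 in \cite{c09}; your attempted shortcut $(1)\Leftrightarrow(3)$ is not supported by the reference you give.

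Second, in the coprime clause you try to link $(8)$ back to $(4)$ by asserting that $\Delta_\nu(G)$ and $\tL(G)$ are ``stably equivalent graded self-injective algebras'' differing ``only by the twist $\nu$''. This is not justified: these algebras live on different quivers (the returning-arrow quiver of $Q(G)$ versus $\tQ(G)$ itself), and no such direct stable equivalence is available. The paper's argument is cleaner and uses exactly the connectedness you note: when the cycle lengths are coprime, $\zZ_\vv\tQ(G)$ is connected, so the complete $\tau$-slice $\zZ_\vv\tQ[0,n]$ is already connected and hence $\LL^N=\LL(G)$ and $\GG^N=\GG(G)$. Then $\Pi(G)$ is the $(n{+}1)$-preprojective algebra of $\GG^N$ and $\Delta_\nu(G)$ is the twisted trivial extension of $\LL^N$, so $(7)$ and $(8)$ follow from the equivalences already established for $(2)$ and $(3)$ via Theorem~6.7 of \cite{g20}. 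You should replace your hand-wavy stable-equivalence claim with this identification.
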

\begin{proof}
By Theorem 4.14 of \cite{mm11}, we have that $\mathcal D^b( \mathrm{qgr} \tG(G)) $ is equivalent to $\mathcal D^b(\GG^N) $ as triangulate categories, since $\tG(G)$ is AS-regular.

By Theorem 1.1 of \cite{c09}, we have that ${\mathrm{grmod}} \tL(G)$ and ${\mathrm{grmod}} \Delta (\LL^N)$ are equivalent, thus $\underline{\mathrm{grmod}} \tL(G)$ and $\underline{\mathrm{grmod}} \Delta (\LL^N)$ are equivalent as triangulated categories.

By Corollary 1.2 of \cite{c09}, we have that $\underline{\mathrm{grmod}} \tL(G)$ and $\caDb(\mmod \LL^N)$ are equivalent as triangulated categories.

On the other hand, we have $\GG^N \simeq \LL^{N,!,op}$ is Koszul by Propositions 2.6 and 6,5 of \cite{g20}.
By Proposition 2.5,  and Corollary 2.4 of \cite{g20}, we have that $\caD^b(\mmod \GG^N) $ and $ \caD^b(\mmod \LL^N)$ are equivalent to the orbit categories of $\caD^b(\grm \GG^N)$ and of $\caD^b(\grm \LL^N)$, respectively, using the proof Theorem 2.12.1 of \cite{bgs96} (see the arguments before Theorem 6.7 of \cite{g20}).
So by Theorem 2.12.6 of \cite{bgs96}, $\caDb(\mmod \GG^N)$ and $\caDb(\mmod \LL^{N})$ are equivalent as triangulated categories.
Similar to the proof of  Theorem 6.7 of \cite{g20}, we also get that $\mathcal D^b( \mathrm{qgr} \tG(G)) $ and $\underline{\mathrm{grmod}} \tL(G)$ are equivalent as triangulated categories.

By Lemma II.2.4 of \cite{h88}, $\mmod \widehat{\LL^N}$ and $\mathrm{grmod}\Delta(\LL^N)$ are equivalent, so $\underline{\mmod} \widehat{\LL^N}$ and $\underline{\mathrm{grmod}} \Delta(\LL^N)$ are equivalent as triangulated categories.
By Theorem II.4.9 of \cite{h88}, $\caDb(\mmod\LL^N) $ and $\underline{\mmod} \widehat{\LL^N}$ are equivalent as triangulated categories.

Similarly, $\caDb(\mmod\GG^N) $ and $\underline{\mmod} \widehat{\GG^N}$ are equivalent as triangulated categories.

When the lengths of the oriented circles in $\tQ(G)$ is coprime, then $\zZ_{\vv}\tQ(G)$ has only one connected component, by Proposition \ref{zquivers}.
So $\LL^N = \LL(G)$ and equivalences for (2),(3),(7) and (8) follow from Theorem 6.7 of \cite{g20}.
\end{proof}

We remark that the equivalence of (1) and (2) can be regarded as a McKay quiver version of Beilinson correspondence and the equivalence of (1) and (4) can be regarded as a McKay quiver version of Berstein-Gelfand-Gelfand correspondence \cite{b78,bgg78}.
So we have the following analog to \eqref{depictingalg}, for the equivalences of the triangulate categories in Theorem \ref{mckequi}.

\tiny\eqqc{depictingcat}{\xymatrix@C=2.0cm@R1.5cm{ \txt{$\caDb(\LL^N$)} \ar@{<->}[rr]^-{\txt{} }\ar@{<->}@[black][dr]|-{\txt{}}&& \txt{ $\caDb(\GG^N) $} \ar@{<->}@[black][dr]|-{\txt{ Beilinson equivalence }} \\
&\txt{ \color{black}{$\underline{\mathrm{grmod}} \tL(G) $}}\ar@{<->}@[black][dl]|-{\txt{}} \ar@[black]@{<->}[rr]|-{\txt{\color{black}{BGG equivalence}}} &&\txt{\color{black}{$\mathcal D^b( \mathrm{qgr} \tG(G)) $} }\ar@{<->}@[black][dl]\\
\txt{ \color{black}{$\underline{\mmod} \widehat{\LL^N} $}} \ar@{<->}@[black][uu] \ar@[black]@{<->}[rr]^-{\txt{\color{black}{}}} &&\ar@{<->}@[black][uu]\txt{\color{black}{$\underline{\mmod} \widehat{\GG^N}$} }}
}\normalsize

\section{$2$-slice Algebras Associated to McKay Quivers}
\label{sec:twoslicea}
In the classical representation theory, we take a slice from the translation quiver and view the path algebra as $1$-slice algebra.
For $n\ge 2$, we need to know both quiver and the relations for a $n$-slice algebra, especially for the tame ones.
For any $n$, we can construct tame $n$-slice algebras using the McKay quivers.

Now we consider the case of $n=2$, by writing down their quivers and relations.
We first determine the relations which produce stable $2$-translation algebra of finite complexity for a given McKay quiver $\tQ$.
By constructing the bound quiver $\zZ_{\vv}\tQ$ and taking a complete $\tau$-slice, then taking the quadratic dual, we get the related $2$-slice quiver and $2$-slice  algebra.

Assume that $\tQ= \tQ(G)= (\tQ_0,\tQ_1)$ is a McKay quiver of finite subgroup $G$ of $\mathrm{SL}(3,\mathbb C)$.
Let $\tL=k\tQ/I$ be a stable $2$-translation algebra of finite complexity.
Note that $\tL = \sum_{t=1}^3\tL_t $ is a stable $2$-translation algebra with trivial $2$-translation.
Now we want to determine the relation $\trho$ such that $I = (\rho)$.

Since $\tL$ is Koszul, $\trho \subset k \tQ_2$,  the $k$-space spanned by the paths of length $2$.
By Lemma 2.1 of \cite{gw00}, we have $\dmn_k e_i \tL_1 e_j = \dmn_k e_j \tL_2 e_i$ and $\tL_3e_i \simeq \tL_0 e_i$ as $\tL_0$ module.
Let $M(\tQ)$ be the adjacent matrix of $\tQ$, that is, an $|\tQ_0|\times|\tQ_0|$ matrix with the $(i,j)$ entry the number of arrows from $i$ to $j$, then the Loewy matrix of $\tL$ is \eqqc{LoewyL}{ L(\tL) = \mat{cccc}{ M(\tQ)  &-E &0\\  M'(\tQ)& 0& -E\\ E& 0& 0}.}
This is exactly the Loewy matrix of $\tL(G)$, the basic algebra Morita equivalent to $\wedge[x_0,x_1,x_2]*G$.
Therefore we have the following Proposition.
\begin{pro}\label{commrel} Let $i,j \in \tQ_0$.
Then
\begin{enumerate}
\item If there is an arrow from $i$ to $j$, then there is a bound  path of length $2$ from $j$ to $i$ in $\tL$.

\item If there is only one arrow from $i$ to $j$, any two paths of length 2 from $j$ to $i$ are linearly dependent in $\tL$.
\end{enumerate}
\end{pro}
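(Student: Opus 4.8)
The plan is to read both assertions off the dimension identities encoded in the Loewy matrix \eqref{LoewyL}, so the whole proof is a short bookkeeping argument. The one convention to keep straight is that an arrow $\xa\colon i\to j$ lies in $e_j\tL_1 e_i$; hence the number of arrows from $i$ to $j$ is $\dmn_k e_j\tL_1 e_i$ (the $(i,j)$-entry of $A_1(\tL)=M(\tQ)$), while a path of length $2$ from $j$ to $i$ has image in $e_i\tL_2 e_j$ (whose dimension is the $(j,i)$-entry of $A_2(\tL)=M'(\tQ)$).

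First I would invoke Lemma 2.1 of \cite{gw00}, already quoted just before the statement, which gives $\dmn_k e_i\tL_1 e_j=\dmn_k e_j\tL_2 e_i$ for all $i,j$; interchanging $i$ and $j$ this reads $\dmn_k e_j\tL_1 e_i=\dmn_k e_i\tL_2 e_j$. Thus the number of arrows from $i$ to $j$ equals $\dmn_k e_i\tL_2 e_j$. For part (1): if there is an arrow from $i$ to $j$, then $\dmn_k e_i\tL_2 e_j\ge 1$; since $\tL=k\tQ/I$ is generated in degrees $0$ and $1$, the space $e_i\tL_2 e_j$ is spanned by the images in $\tL$ of the length-$2$ paths of $\tQ$ from $j$ to $i$, so a nonzero such space forces at least one of these paths to have nonzero image, i.e.\ to be a bound path. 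For part (2): if there is exactly one arrow from $i$ to $j$, then $\dmn_k e_i\tL_2 e_j=1$, so the images of any two length-$2$ paths from $j$ to $i$ lie in a one-dimensional space and are therefore linearly dependent in $\tL$.

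I do not expect a genuine obstacle here; the only points requiring care are the index/side conventions (whether one reads off $A_1(\tL)$ or its transpose $A_2(\tL)$) and the remark that quadraticity of $\tL$—equivalently, that $\tL(G)$ is Koszul, by Proposition \ref{mckpair}—is exactly what upgrades "$e_i\tL_2 e_j\neq 0$" to the existence of an honest length-$2$ bound path. Both are immediate from the standing hypotheses, so the proposition is essentially a two-line corollary of Lemma 2.1 of \cite{gw00}; note in particular that the finite-complexity hypothesis is not needed for this statement.
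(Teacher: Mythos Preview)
Your proof is correct and follows exactly the paper's approach: the paper's own proof is the single line ``The proposition follows directly from $\dmn_k e_i \tL_1 e_j = \dmn_k e_j \tL_2 e_i$,'' and you have simply unpacked this dimension identity with the appropriate index swap and the (harmless) observation that $\tL_2$ being spanned by images of length-$2$ paths is what converts nonvanishing of $e_i\tL_2 e_j$ into the existence of a bound path. Your remark that the finite-complexity hypothesis is irrelevant here is also accurate.
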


\begin{proof}
The proposition follows directly from  $\dmn_k e_i \tL_1 e_j = \dmn_k e_j \tL_2 e_i$.
\end{proof}
Therefore the number of arrows from $i$ to $j$ is exactly the order of the set of maximal linearly independent sets of bound paths of length $2$ from $j$ to $i$.

We also have the following Proposition
\begin{pro}\label{zerorel}
If $\xa:i\to j,\xb:j\to h$ are arrows in $\tQ$ with $i,j,h$ different vertices in $\tQ$ and there is no arrow from $h$ to $i$, then $\xb\xa =0 $ in $\tL$.
\end{pro}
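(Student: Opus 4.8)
The plan is to realize $\xb\xa$ inside a graded component of $\tL$ whose vanishing is forced by the dimension identities recorded just before Proposition \ref{commrel}. First I would observe that, since $\xa\colon i\to j$ and $\xb\colon j\to h$, the composite $\xb\xa$ is a path of length $2$ from $i$ to $h$, so $\xb\xa = e_h(\xb\xa)e_i \in e_h\tL_2 e_i$. Hence it suffices to show that $e_h\tL_2 e_i = 0$.

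Next I would invoke Lemma 2.1 of \cite{gw00} --- the source of the equality $\dmn_k e_i \tL_1 e_j = \dmn_k e_j \tL_2 e_i$ already used to prove Proposition \ref{commrel} --- with the second index taken to be $h$, which gives $\dmn_k e_h\tL_2 e_i = \dmn_k e_i\tL_1 e_h$. The right-hand side is precisely the number of arrows from $h$ to $i$ in $\tQ$, and this number is $0$ by hypothesis. Therefore $e_h\tL_2 e_i = 0$, and in particular $\xb\xa = 0$ in $\tL$.

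I do not anticipate a genuine obstacle: this is the length-$2$ ``zero-relation'' companion of Proposition \ref{commrel}, extracted from the same self-injectivity (Koszul duality) input. The only point requiring care is the source--target bookkeeping, namely feeding the dimension identity the ordered pair $(i,h)$ so that the hypothesis ``no arrow $h\to i$'' --- that is, $e_i\tL_1 e_h = 0$ --- lines up with the component $e_h\tL_2 e_i$ containing $\xb\xa$. The distinctness of $i,j,h$ is in fact not used in this deduction, but it is the setting in which the statement will later be applied.
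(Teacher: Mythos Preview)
Your proof is correct and follows essentially the same approach as the paper: use the dimension identity $\dim_k e_h\tL_2 e_i = \dim_k e_i\tL_1 e_h$ (coming from the self-injective / $2$-translation structure with trivial $\tau$) together with the hypothesis that there is no arrow $h\to i$ to conclude $e_h\tL_2 e_i = 0$, whence $\xb\xa = 0$. Your index bookkeeping is in fact tidier than the paper's version, and your observation that the distinctness of $i,j,h$ is not actually needed for the deduction is accurate.
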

\begin{proof}
Due to that there is no arrow from $h$ to $i$, $\dmn_k e_h\tL_1 e_i =0$.
Since $\tQ$ is a $2$-translation quiver with $\tau i =i$, thus by definition, $\dmn_k e_i\tL_2 e_h = \dmn_k e_h\tL_1 e_i =0$.
Thus for any arrows $\xa:i\to j,\xb:j\to h$, we have $\xb\xa =0$ in $\tL$.
\end{proof}

\medskip
Now we determine the relations for the McKay quiver of finite Abelian subgroup of $\mathrm{SL} (3, \mathbb C)$ and of the subgroup which is a finite subgroup of $\mathrm{SL}(2, \mathbb C)$ embedded in $\mathrm{SL} (3, \mathbb C)$.
Due to Proposition \ref{zerorel}, we excluded the McKay quivers of Abelian group which has a direct summand of $\zZ/3\zZ$.

Let $s \ge  4, r\ge 4$ and let  $G(s,r)$ be the finite subgroup of $\mathrm{SL}(3, \mathbb C)$ obtained from  the Abelian groups $\zZ/s\zZ\times \zZ/r\zZ $, using the embedding in Proposition \ref{abelmcq}.
Write $\Xi$ for $A_l, D_l, E_6, E_7$ and $E_8$ with $l\ge 5$.
Let  $G(\Xi)$ be the finite subgroup of $\mathrm{SL} (3, \mathbb C)$ obtained from finite subgroup $G(\Xi)$ of $\mathrm{SL}(2, \mathbb C)$ with McKay quiver of type $\Xi$, using the embedding in Proposition \ref{embedmcq}.
Let $\tQ(G)$ be the McKay quiver of subgroup $G$, now we determine the relations for the stable $2$-translation  algebras with McKay quiver  $\tQ(G)$ as their quivers and for the $2$-slice algebras defined by the quiver $\tQ(G)$.
Denote by $k^*$ the set of nonzero elements in $k$.
Write the McKay quiver for $G(s,r)$ as $\tQ(s,r)$ and the one for $G(\Xi)$ as $\tQ(\Xi)$.

\subsection{Relations for $\tQ(s,r)$}:
Let $\tQ = \tQ(s,r)$ be the McKay quiver of an Abelian group which is a direct sum of cyclic group of order $s$ and $r$.
By Proposition \ref{abelmcq}, its vertex set $\tQ_0 = \zZ/s \zZ \times \zZ/r \zZ$, and the arrows are described as follows: for each vertex $\bi \in \tQ_0$, there is an arrow $\xa_\bi$ from $\bi$ to $\bi+\be_1$, an arrow $\xb_\bi$ from $\bi$ to $\bi+\be_2$ and an arrow $\xc_{\bi}$ from $\bi$ to $\bi-\be$.
So the McKay quiver $\tQ(s,r)$ is as follows:
\eqqcn{McKayquiverGsr}{
\xymatrix@C=0.8cm@R0.6cm{
\stackrel{(0,0)}{\circ} \ar[r]\ar@[green][d]\ar@[red]@/^/[dddddrrrrr] & \stackrel{(1,0)}{\circ} \ar[r]\ar@[green][d]\ar@[red]@/^/[dddddl] & \stackrel{(2,0)}{\circ} \ar[r]\ar@[green][d]\ar@[red]@/^/[dddddl] & \stackrel{(3,0)}{\circ} \ar@{--}[r]\ar@[green][d]\ar@[red]@/^/[dddddl] & {} \stackrel{(s-2,0)}{\circ} \ar[r]\ar@[green][d] & \stackrel{(s-1,0)}{\circ} \ar@/^/[lllll]\ar@[green][d]\ar@[red]@/^/[dddddl] \\
\stackrel{(0,1)}{\circ} \ar[r]\ar@[green][d]\ar@[red][urrrrr] & \stackrel{(1,1)}{\circ} \ar[r]\ar@[green][d]\ar@[red][ul]  & \stackrel{(2,1)}{\circ} \ar[r]\ar@[green][d]\ar@[red][ul]  & \stackrel{(3,1)}{\circ} \ar@{--}[r]\ar@[green][d]\ar@[red][ul]  & {} \stackrel{(s-2,1)}{\circ} \ar[r]\ar@[green][d]  & \stackrel{(s,1)}{\circ} \ar@/^/[lllll]\ar@[green][d] \ar@[red][ul] \\
\stackrel{(0,2)}{\circ} \ar[r]\ar@[green][d]\ar@[red][urrrrr] & \stackrel{(1,2)}{\circ} \ar[r]\ar@[green][d]\ar@[red][ul]  & \stackrel{(2,2)}{\circ} \ar[r]\ar@[green][d]\ar@[red][ul]  & \stackrel{(3,2)}{\circ} \ar@{--}[r]\ar@[green][d]\ar@[red][ul]  & {} \stackrel{(s-2,2)}{\circ} \ar[r]\ar@[green][d]  & \stackrel{(s,2)}{\circ} \ar@/^/[lllll]\ar@[green][d] \ar@[red][ul] \\
\stackrel{(0,3)}{\circ} \ar[r]\ar@{--}[d]\ar@[red][urrrrr] & \stackrel{(1,3)}{\circ} \ar[r]\ar@{--}[d]\ar@[red][ul]  & \stackrel{(2,3)}{\circ} \ar[r]\ar@{--}[d]\ar@[red][ul]  & {} \stackrel{(3,3)}{\circ} \ar@{--}[r]\ar@{--}[d]\ar@[red][ul]  & \stackrel{(s-2,3)}{\circ} \ar[r]\ar@{--}[d]  & \stackrel{(s,3)}{\circ} \ar@/^/[lllll]\ar@{--}[d] \ar@[red][ul] \\ 
\stackrel{(0,r-2)}{\circ} \ar[r]\ar@[green][d] & \stackrel{(1,r-2)}{\circ} \ar[r]\ar@[green][d] & \stackrel{(2,r-2)}{\circ} \ar[r]\ar@[green][d] & \stackrel{(3,r-2)}{\circ} \ar@{--}[r]\ar@[green][d] & {} \stackrel{(s-2,r-2)}{\circ} \ar[r]\ar@[green][d]  & \stackrel{(s,r-2)}{\circ} \ar@/^/[lllll]\ar@[green][d]  \\
\stackrel{(0,r-1)}{\circ} \ar[r]\ar@[green]@/^/[uuuuu]\ar@[red][urrrrr] &
\stackrel{(1,r-1)}{\circ} \ar[r]\ar@[green]@/^/[uuuuu] \ar@[red][ul] &
\stackrel{(2,r-1)}{\circ} \ar[r]\ar@[green]@/^/[uuuuu] \ar@[red][ul]&
\stackrel{(3,r-1)}{\circ} \ar@{--}[r]\ar@[green]@/^/[uuuuu] \ar@[red][ul]& {} \stackrel{(s-2,r-1)}{\circ} \ar[r]\ar@[green]@/^/[uuuuu]  & \stackrel{(s-1,r-1)}{\circ} \ar@/^/[lllll]\ar@[green]@/^/[uuuuu]\ar@[red][ul]  \\
}
}

For each vertex $\bi \in \tQ_0$, write
$$z(\xc,\bi,c_{\bi}) =c_{\bi}\xb_{\bi+e_1}\xa_{\bi} + \xa_{\bi+xe_2}\xb_{\bi},$$
\eqqcn{relGrsb}{z(\xb,\bi,b_\bi)=b_\bi\xa_{\bi-\be}\xc_\bi + \xc_{\bi+\be_1}\xa_\bi,} and \eqqcn{relGrsa}{z(\xa,\bi,a_\bi)=a_\bi\xb_{\bi-\be}\xc_\bi + \xc_{\bi+\be_2}\xb_\bi, } for $a_\bi, b_\bi, c_{\bi}\in k^*$.
Let $C(a,b,c) = \{a_\bi,b_\bi,c_\bi| \bi\in \tQ_0\}$ be a subset of $k^*$ and let
\eqqcn{rhoQsrcz}{\arr{lll}{ \trho_{comm}(s,r, C(a,b,c)) & = & \{z(\xa,\bi,a_\bi), z(\xb,\bi,b_\bi),z(\xc,\bi,c_\bi)|\bi\in \tQ_0\}\\
 \trho_{zero}(s,r) & = & \{\xa_{\bi+\be_1}\xa_\bi, \xb_{\bi+\be_2}\xb_\bi, \xc_{\bi-\be}\xc_\bi|\bi\in \tQ_0\},}}
and let \eqqc{rhoQsr}{\trho(s,r, C(a,b,c)) = \trho_{comm}(s,r,C(a,b,c))\cup \trho_{zero}(s,r).}

\begin{pro}\label{relQsr2tr}
If the quotient algebra $\tL(s,r) = k\tQ(s,r)/I$ is a $2$-translation  algebra, then there is \eqqcnn{}{\{a_\bi,b_\bi,c_\bi| \bi\in \tQ_0\}\subseteq k^*,} such that $I$ is generated by the set $\trho(s,r,C(a,b,c))$ in \eqref{rhoQsr}.
\end{pro}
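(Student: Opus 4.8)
The plan is to pin down the degree-$2$ relation space of $\tL(s,r)$ block by block according to source and target vertices, using the self-injectivity of $\tL(s,r)$ (Lemma 2.1 of \cite{gw00}) together with Propositions \ref{commrel} and \ref{zerorel}. Since $\tL(s,r)$ is a $(3,q)$-Koszul algebra, its defining relations are quadratic, so $I=(\trho)$ for a set $\trho\subseteq k\tQ_2$ which we may take as a union $\bigcup_{\bi,\bj}\trho_{\bi,\bj}$ of bases of the kernels $R_{\bi,\bj}$ of the multiplication maps $e_\bj k\tQ_2 e_\bi\to e_\bj\tL_2 e_\bi$.

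First I would enumerate, for a fixed source $\bi$, the nine length-$2$ paths leaving $\bi$; they terminate at the six vertices $\bi+2\be_1$, $\bi+2\be_2$, $\bi-2\be$, $\bi+\be$, $\bi-\be_1$, $\bi-\be_2$, with multiplicities $1,1,1,2,2,2$. These six vertices are pairwise distinct exactly because $s,r\ge 4$ (this is where the hypothesis enters, together with the exclusion of $\zZ/3\zZ$-summands that makes Proposition \ref{zerorel} applicable). On the other hand, by Lemma 2.1 of \cite{gw00}, $\dmn_k e_\bj\tL_2 e_\bi=\dmn_k e_\bi\tL_1 e_\bj$ is the number of arrows $\bj\to\bi$, namely $0$ for the first three targets and $1$ for the last three. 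Hence every $R_{\bi,\bj}$ is one-dimensional, and $\trho$ consists of exactly six relations per vertex, matching $\dmn_k k\tQ_2-\dmn_k\tL_2=6sr$.

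For the three ``monomial'' targets $\bi+2\be_1$, $\bi+2\be_2$, $\bi-2\be$ the relation is forced: the three vertices of each such path are distinct and no arrow closes it, so Proposition \ref{zerorel} gives $\xa_{\bi+\be_1}\xa_\bi=0$, $\xb_{\bi+\be_2}\xb_\bi=0$, $\xc_{\bi-\be}\xc_\bi=0$ in $\tL$, precisely the members of $\trho_{zero}(s,r)$. For each of the remaining targets $\bi+\be$, $\bi-\be_2$, $\bi-\be_1$, the line $R_{\bi,\bj}$ lies in the plane spanned by the two length-$2$ paths $p_1,p_2$ from $\bi$ to $\bj$; since exactly one arrow runs $\bj\to\bi$, Proposition \ref{commrel}(2) forces $p_1$ and $p_2$ to be linearly dependent in $\tL$, so $R_{\bi,\bj}=\langle\lambda p_1+\mu p_2\rangle$. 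Writing these out reproduces the shapes of $z(\xc,\bi,\cdot)$, $z(\xb,\bi,\cdot)$, $z(\xa,\bi,\cdot)$ respectively; once we know $\lambda\ne 0$ and $\mu\ne 0$, dividing by the coefficient of the distinguished summand of each $z$ produces scalars $a_\bi,b_\bi,c_\bi\in k^*$, and assembling over all $\bi$ gives $I=(\trho(s,r,C(a,b,c)))$.

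The main obstacle is the nondegeneracy claim: neither $p_1$ nor $p_2$ vanishes in $\tL$, equivalently $\lambda,\mu\ne 0$. I would extract this from the graded Frobenius (self-injective) structure of $\tL(s,r)$: because the $2$-translation of $\tQ(s,r)$ is trivial, each indecomposable projective-injective $\tL e_u$ has simple socle $\tL_0 e_u$ concentrated in degree $3$, so the multiplication pairings $e_a\tL_1 e_b\times e_b\tL_2 e_a\to e_a\tL_3 e_a$ are perfect. Hence the nonzero submodule of $\tL e_u$ generated by any arrow $\gamma\colon u\to v$ must contain this socle; tracing which length-$2$ composites $\delta\gamma$ can be nonzero --- using the monomial relations already obtained to kill the vanishing ones --- forces the relevant length-$2$ and length-$3$ composites through the block under consideration to be nonzero, and a short case analysis according to which of the two candidate length-$2$ paths spans $e_u\tL_2 e_v$ then rules out $\lambda=0$ and $\mu=0$. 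Keeping this bookkeeping consistent across the three commutation-block types is the only delicate point; the remainder is the dimension count above.
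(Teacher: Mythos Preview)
Your proposal is correct and follows essentially the same route as the paper's proof: both rely on Propositions \ref{commrel} and \ref{zerorel} to produce the six relations at each vertex, and both invoke the graded Frobenius structure (the perfect pairing $e_\bi\tL_1 e_\bj\times e_\bj\tL_2 e_\bi\to e_\bi\tL_3 e_\bi$) to rule out degeneration of the commutation relations. The organisational difference is minor. You first use $(3,q)$-Koszulity to know $I$ is quadratic, then compute each $R_{\bi,\bj}=\ker\bigl(e_\bj k\tQ_2 e_\bi\to e_\bj\tL_2 e_\bi\bigr)$ directly via the dimension identity $\dmn_k e_\bj\tL_2 e_\bi=\dmn_k e_\bi\tL_1 e_\bj$. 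The paper instead shows the containment $\trho(s,r,C(a,b,c))\subset I$ and then checks, by direct calculation in $k\tQ/(\trho)$, that the degree-$2$ and degree-$3$ pieces already have the dimensions forced by the Loewy matrix, so the surjection $k\tQ/(\trho)\twoheadrightarrow\tL(s,r)$ is an isomorphism. The paper's variant has the mild advantage that it does not need to invoke quadraticity of $I$ separately (the dimension comparison in all degrees closes the argument), whereas yours is more transparent about \emph{why} each relation has the asserted shape. Your remark that the six target vertices are pairwise distinct precisely when $s,r\ge 4$ is a useful sanity check the paper leaves implicit. On the nondegeneracy of the coefficients, the paper's one-line appeal to the perfect pairing is doing the same work as your socle-tracing case analysis; either is adequate here.
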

\begin{proof}
Assume that $\tL(s,r) = k\tQ/I$ is a $2$-translation algebra.

Consider the square with vertices $\bi, \bi+\be_1, \bi+\be_2, \bi+\be $. Since there is an arrow $\xc_{\bi}$ from $\bi$ to $\bi-\be$, one gets that there are $c_\bi,c'_\bi$ such that \eqqcn{relGrsc}{ c_\bi\xb_{\bi+\be_1}\xa_\bi + c'_\bi\xa_{\bi+\be_2}\xb_\bi \in I,} by  (2) of Proposition \ref{commrel}.
Since $\tQ(G)$ is $2$-translation quiver with trivial $2$-translation, and $e_\bi \tL_1 (G) e_{\bi+\be} \times e_{\bi+\be}\tL_2(G) e_\bi \to e_\bi \tL_3 e_\bi$ defines an non degenerated bilinear form, we have that $c_\bi \neq 0\neq c'_\bi$.
And we may take $c'_\bi=1$, so we get  $$z(\xc,\bi,c_{\bi}) = z(\xc,\bi,c_{\bi},1)\in I.$$

Similarly, there are $a_\bi ,b_\bi \in k^*$ such that  \eqqcn{relGrsb}{z(\xb,\bi,b_\bi)=b_\bi\xa_{\bi-\be}\xc_\bi + \xc_{\bi+\be_1}\xa_\bi \in I,} and \eqqcn{relGrsa}{z(\xa,\bi,a_\bi)=a_\bi\xb_{\bi-\be}\xc_\bi + \xc_{\bi+\be_2}\xb_\bi \in I.}

For each $\bi\in \tQ_0$, since $\bi \neq \bi+2\be_t$ for $1\le t \le 3$, we have that \eqqcn{}{\xa_{\bi+\be_1}\xa_\bi, \xb_{\bi+\be_2}\xb_\bi, \xc_{\bi-\be}\xc_\bi \in I,} by Proposition \ref{zerorel}.

So $\trho(s,r,C(a,b,c)) \subset I$.

Let $\tL=k\tQ/(\trho(s,r,C(a,b,c)))$, then $\tL(s,r)$ is a factor algebra of $\LL$.
Then we have that $\dmn_k e_j\tL_1 e_i = \dmn_k e_j\tL_1(G(s,r)) e_i $ for all $i,j$ since $\tL$ and $\tL(s,r)$ have the same quiver.
In fact, by using the same notation for a path in $\tQ$ and its image in $\tL$, for each $\bi \in \tQ_0 = \zZ/s\zZ \times \zZ/r\zZ$, we have that
$$ e_\bi \tL_2 = k \xb_{\bi-\be_2}\xa_{\bi-\be} + k \xc_{\bi+\be}\xb_{\bi+be_1} + k \xc_{\bi+\be}\xa_{\bi+be_2}, $$

$$\tL_3 e_\bi = k \xc_{\bi+\be}\xb_{\bi+\be_1}\xa_\bi,$$
by computing directly using the relations in $\trho(s,r,C(a,b,c))$.
So we have $$\dmn_k e_\bi \tL_3 e_\bi \le 1 = \dmn_k e_\bi \tL_3(G(s,r)) e_{\bi}, $$
and $$\dmn_k e_{\bi} \tL_2 e_{\bi'}  \left\{ \arr{ll}{\le 1, & \mbox{for }\bi' = \bi-\be, \bi+\be_1,\bi+\be_2  \\ = 0, & \mbox{otherwise} }\right.$$
This implies that $$\dmn_k e_{\bi} \tL_2 e_{\bi'} \le \dmn_k e_{\bi} \tL_2 e_{\bi'},$$ for any $  \bi, \bi'$, by comparing the Loewy matrix of $\tL(s,r)$.

So $$e_{\bi'} \tL_t e_{\bi} =e_{\bi'} \tL_t(G(s,r)) e_{\bi}$$ for all $\bi,\bi'$ and $ \tL= \tL(s,r)$.

This proves $I=(\trho(s,r,C(a,b,c)))$ is generated by $\trho(s,r,C(a,b,c))$.
\end{proof}

For the quadratic dual quiver $\tQ^{\perp} (s,r)$, we have the following  Proposition describe its relations, by an easy calculation.
\begin{pro}\label{relQsrAS} If
the quotient algebra $\tG(s,r)= k\tQ(s,r)/I'$ is Koszul AS-regular algebra of finite Gelfand-Kirilov dimension, then the relation set can be chosen as
\eqqc{rhoQsrperp}{ \trho^{\perp}(s,r,C(a,b,c)) = \{z(\xa,\bi,-a^{-1}_\bi), z(\xb,\bi,-b^{-1}_\bi),z(\xc,\bi,-c^{-1}_\bi)|\bi\in \tQ_0\}.}
\end{pro}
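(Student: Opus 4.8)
The plan is to obtain Proposition \ref{relQsrAS} from Proposition \ref{relQsr2tr} by quadratic duality, the only computational point being the determination of the orthogonal complement of the relation set $\trho(s,r,C(a,b,c))$. First I would observe that, under the hypotheses, $\tL(s,r):=\tG(s,r)^{!,op}$ is a stable $2$-translation algebra with quiver $\tQ(s,r)$: it is Koszul and self-injective because $\tG(s,r)$ is Koszul AS-regular (the Koszul duality recalled before Proposition \ref{n-sliceprep}, i.e.\ Propositions 2.2 and 2.6 of \cite{g16}, together with the fact that the Koszul dual of an AS-regular algebra is Frobenius), it has Loewy length $n+2=4$ because $\gkdim\tG(s,r)$ is finite with $\tQ(s,r)$ the McKay quiver of a subgroup of $\mathrm{SL}(3,\mathbb C)$, and $\tQ(s,r)$ is a $2$-translation quiver by Proposition \ref{abelmcq}; by Theorem \ref{cls-dualpair} the finiteness of $\gkdim\tG(s,r)$ matches the finite complexity of $\tL(s,r)$. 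Then Proposition \ref{relQsr2tr} supplies $C(a,b,c)=\{a_\bi,b_\bi,c_\bi\mid\bi\in\tQ_0\}\subseteq k^*$ with $\tL(s,r)\simeq k\tQ(s,r)/(\trho(s,r,C(a,b,c)))$, and since the quadratic dual of a quadratic algebra is recovered by replacing the relations with a spanning set of their orthogonal complement, it remains to check that $(\trho(s,r,C(a,b,c)))^{\perp}$ is spanned by the set $\trho^{\perp}(s,r,C(a,b,c))$ of \eqref{rhoQsrperp}.

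For this I would compute the complement vertex by vertex, with respect to the self-dual pairing $\xa_1\xa_2(\xb_1,\xb_2)=\xa_1(\xb_1)\xa_2(\xb_2)$ of Section \ref{sec:pre}. Fix $\bi\in\tQ_0$. Since $s,r\ge 4$, the nine length-$2$ paths of $\tQ(s,r)$ out of $\bi$ end at six pairwise distinct vertices: $\bi+2\be_1,\bi+2\be_2,\bi-2\be$, reached by the single paths $\xa_{\bi+\be_1}\xa_\bi,\xb_{\bi+\be_2}\xb_\bi,\xc_{\bi-\be}\xc_\bi$ respectively, and $\bi+\be,\bi-\be_1,\bi-\be_2$, reached by two paths each. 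At the first three vertices $e_{\bi'}k\tQ_2e_\bi$ is one-dimensional and is the span of a member of $\trho_{zero}(s,r)$, so its orthogonal complement vanishes and $\tG(s,r)$ has no relation there. At each of the other three vertices $e_{\bi'}k\tQ_2e_\bi$ is two-dimensional and $\trho_{comm}(s,r,C(a,b,c))$ contributes a single nonzero element; for instance at $\bi+\be$ the relation is $z(\xc,\bi,c_\bi)=c_\bi\xb_{\bi+\be_1}\xa_\bi+\xa_{\bi+\be_2}\xb_\bi$, whose orthogonal complement in that plane is spanned by $\xb_{\bi+\be_1}\xa_\bi-c_\bi\xa_{\bi+\be_2}\xb_\bi=-c_\bi\,z(\xc,\bi,-c_\bi^{-1})$; similarly the complements of $z(\xb,\bi,b_\bi)$ at $\bi-\be_2$ and of $z(\xa,\bi,a_\bi)$ at $\bi-\be_1$ are scalar multiples of $z(\xb,\bi,-b_\bi^{-1})$ and $z(\xa,\bi,-a_\bi^{-1})$ respectively.

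Taking the union over $\bi\in\tQ_0$ then shows that $(\trho(s,r,C(a,b,c)))^{\perp}$ is spanned by $\{z(\xa,\bi,-a_\bi^{-1}),z(\xb,\bi,-b_\bi^{-1}),z(\xc,\bi,-c_\bi^{-1})\mid\bi\in\tQ_0\}=\trho^{\perp}(s,r,C(a,b,c))$, which is the assertion. The only delicate part is the bookkeeping in the second paragraph: confirming that for $s,r\ge 4$ the six terminal vertices are indeed distinct, so that the local dimensions are $1$ and $2$ and the monomial relations and the commutation relations do not mix, and tracking the scalar so that the complement of the relation with parameter $c_\bi$ comes out proportional to the one with parameter $-c_\bi^{-1}$; everything else is formal.
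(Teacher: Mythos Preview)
Your argument is correct and follows essentially the same route as the paper: pass to the Koszul dual to land in the setting of Proposition~\ref{relQsr2tr}, then identify $I'$ with the orthogonal complement of $\trho(s,r,C(a,b,c))$. The only cosmetic differences are that the paper compresses the ``dual of AS-regular is self-injective'' step into a citation of Theorem~5.1 of \cite{m99}, and that it verifies orthogonality by the dimension count $\dim_k e_{\bj}k\tQ_2(s,r)e_{\bi}=|\trho^{\perp}_{\bi,\bj}|+|\trho_{\bi,\bj}|$ rather than your explicit vertex-by-vertex analysis.
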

\begin{proof}
Let $\tL(s,r)= k\tQ(s,r)/(\trho(s,r))$ for some $\trho(s,r,C(a,b,c))$ as defined in \eqref{rhoQsr}.
It is immediate that $\trho(s,r,C(a,b,c))$ and $\trho^{\perp}(s,r,C(a,b,c))$ are both linear independent sets of elements in the subspace $k \tQ_2(s,r)$ spanned by paths of length $2$, and by identifying the space $k\tQ_2(s,r)$ of the paths of length $2$ in $\tQ(s,r)$ with its dual space, i.e., $x(y)=0$ for $x\in \trho^{\perp}(s,r,C(a,b,c))$ and $y\in \trho(s,r,C(a,b,c))$.
For each pair $\bi,\bj\in \tQ_0(s,r)$, it is immediate that $\dmn_k k \tQ_2(s,r) = |\trho^{\perp}_{\bi,\bj}(s,r,C(a,b,c))| + |\trho_{\bi,\bj}(s,r,C(a,b,c))|,$ so $\trho(s,r,C(a,b,c))$ and $\trho^{\perp}(s,r,C(a,b,c))$ span orthogonal subspaces in $k \tQ_2(s,r)$.
By Proposition \ref{relQsr2tr}, we have that if $\tL(s,r)= k\tQ(s,r)/I$ is a Koszul $2$-translation algebra then $I$ is generated by $\trho(s,r,C(a,b,c))$.
Thus if $\tG(s,r)= k\tQ(s,r)/I'$ is Koszul AS-regular algebra of finite Gelfand-Kirilov dimension then $I'$ is generated by $\trho^{\perp}(s,r,C(a,b,c))$ for some $C(a,b,c)\subset k^*$, by Theorem 5.1 of \cite{m99}.
\end{proof}

Now construct the quiver $\zZ_{\vv} \tQ(s,r)$ from $\tQ(s,r)$, this is an infinite stable acyclic $2$-translation quiver.
It has $3$ connected components when $s,r$ are both multiples of $3$, and has only one connected component otherwise.

Recall by \eqref{relationzv}, the relation set for $\zZ_{\vv} \tQ(s,r)$ is defined as  $\rho_{\zZ_{\vv} \tQ} =\{\zeta[m]| \zeta \in \trho, m \in \mathbb Z \}$.
Write
$$z(\xc,(\bi,t),c_{\bi}) =c_{\bi}\xb_{\bi+\be_1,t} + \xa_{\bi+\be_2,t+1}\xb_{\bi,t},$$
\eqqcn{relGrsb}{z(\xb,(\bi,t),b_\bi)=b_{\bi}\xa_{\bi-\be,t+1}\xc_{\bi,t} + \xc_{\bi+\be_1,t+1}\xa_{\bi,t},} and \eqqcn{relGrsa}{z(\xa,(\bi,t),a_\bi)=a_\bi\xb_{\bi-\be,t+1}\xc_{\bi,t} + \xc_{\bi+\be_2,t+1}\xb_{\bi,t}, } for $a_\bi, b_\bi, c_\bi \in k^*$.
So we have for each $t\in \zZ$,
\eqqcn{rhoQsrczt}{\arr{lll}{ \trho_{comm}(s,r,C(a,b,c))[t] & = & \{z(\xa,(\bi,t),a_\bi), z(\xb,(\bi,t),b_\bi),z(\xc,(\bi,t),c_\bi)|\bi\in \tQ_0\}\\
 \trho_{zero}(s,r)[t] & = & \{\xa_{\bi+\be_1,t+1}\xa_{\bi,t}, \xb_{\bi+\be_2,t+1}\xb_{\bi,t}, \xc_{\bi-\be,t+1}\xc_{\bi,t}|\bi\in \tQ_0\},}}
and let \eqqc{rhoQsrt}{\rho_{\zZ_{\vv} \tQ(s,r)} = \bigcup_{t\in \zZ}(\trho_{comm}(s,r,C(a,b,c))[t]\cup \trho_{zero}(s,r)[t]).}

By taking a connected component $Q(s,r)$ in the complete $\tau$-slice $\zZ_{\vv} \tQ(s,r)[0,2]$ of $\zZ_{\vv} \tQ(s,r)$, one obtains a $2$-nicely-graded quiver $Q(s,r)$ as follows.

\footnotesize\Tiny
\eqqcn{McKayquiverGsr}{
\xymatrix@C=0.4cm@R0.3cm{
\stackrel{0}{\circ} \ar[rrrrd] \ar@[green][ddddr] \ar@[purple]@/^/[dddddddddddrrrrrrrrrrrrr] & & & \stackrel{0}{\circ} \ar[rrrrd] \ar@[green][ddddr] \ar@[purple]@/^/[dddddddddddll] & & & \stackrel{0}{\circ} \ar@[white]@{--}[rrr] \ar@[green][ddddr] \ar@[purple]@/^/[dddddddddddll] & & & {} \stackrel{0}{\circ} \ar[rrrrd] \ar@[green][ddddr] & & & \stackrel{0}{\circ} \ar@/_/[dlllllllllll] \ar@[purple]@/^/[dddddddddddll] \\
&\stackrel{1}{\circ}  \ar[rrrrd] \ar@[green][ddddr] \ar@[purple]@/_/[dddddddddddrrrrrrrrrrrrr] & & & \stackrel{1}{\circ} \ar[rrrrd] \ar@[green][ddddr] \ar@[purple]@/^/[dddddddddddll] & & & \stackrel{1}{\circ} \ar@{--}[rrr] \ar@[green][ddddr] \ar@[purple]@/^/[dddddddddddll] & & & {} \stackrel{1}{\circ} \ar[rrrrd]\ar@[green][ddddr] & & & \stackrel{1}{\circ} \ar@/_/[dlllllllllll]\ar@[purple]@/^/[dddddddddddll] \\
& & \stackrel{2}{\circ}  & & & \stackrel{2}{\circ}  & & & \stackrel{2}{\circ} \ar@[white]@{--}[rr] & & & {} \stackrel{2}{\circ}  & & & \stackrel{2}{\circ} \\
\stackrel{0}{\circ} \ar[rrrrd] \ar@[purple]@/_/[uurrrrrrrrrrrrr] & & & \stackrel{0}{\circ} \ar@[purple]@/^/[uull] \ar[rrrrd] & & & \stackrel{0}{\circ} \ar@[purple]@/^/[uull]\ar@[white]@{--}[rrr] & & & {} \stackrel{0}{\circ} \ar[rrrrd] & & & \stackrel{0}{\circ}\ar@[purple]@/^/[uull] \ar@/_/[dlllllllllll]\\
&\stackrel{1}{\circ} \ar[rrrrd] \ar@[purple]@/_/[uurrrrrrrrrrrrr] & & & \stackrel{1}{\circ} \ar[rrrrd]\ar@[purple]@/^/[uull] & & & \stackrel{1}{\circ} \ar@[purple]@/^/[uull]\ar@{--}[rrr] & & & {} \stackrel{1}{\circ} \ar[rrrrd] & & & \stackrel{1}{\circ} \ar@[purple]@/^/[uull] \ar@/_/[dlllllllllll] \\
& & \stackrel{2}{\circ}  &\ar@{--}[dd] & & \stackrel{2}{\circ}  &\ar@{--}[dd] & & \stackrel{2}{\circ} \ar@[white]@{--}[rr] &\ar@{--}[dd] & & {} \stackrel{2}{\circ}  &\ar@{--}[dd] & & \stackrel{2}{\circ} \\
&&&&&&&&&&&&&&&&\\
\stackrel{0}{\circ} \ar[rrrrd] \ar@[green][ddddr] & & & \stackrel{0}{\circ} \ar[rrrrd] \ar@[green][ddddr]  & & & \stackrel{0}{\circ} \ar@[white]@{--}[rrr] \ar@[green][ddddr] & & & {} \stackrel{0}{\circ} \ar[rrrd]\ar@[green][ddddr] & & & \stackrel{0}{\circ} \ar@/_/[dlllllllllll] \\
&\stackrel{1}{\circ} \ar[rrrrd] \ar@[green][ddddr] & & & \stackrel{1}{\circ} \ar[rrrrd] \ar@[green][ddddr] & & & \stackrel{1}{\circ} \ar@{--}[rrr]\ar@[green][ddddr] & & & {} \stackrel{1}{\circ} \ar[rrrrd]\ar@[green][ddddr] & & & \stackrel{1}{\circ} \ar@/_/[dlllllllllll] \\
& & \stackrel{2}{\circ}  & & & \stackrel{2}{\circ}  & & & \stackrel{2}{\circ}  & & & {} \stackrel{2}{\circ}  & & & \stackrel{2}{\circ} \\
\stackrel{0}{\circ} \ar[rrrrd]\ar@[purple]@/_/[uurrrrrrrrrrrrr] \ar@[green][ruuuuuuuuu]& & & \stackrel{0}{\circ} \ar[rrrrd] \ar@[purple]@/^/[uull] \ar@[green][ruuuuuuuuu] & & & \stackrel{0}{\circ} \ar@[purple]@/^/[uull]\ar@[white]@{--}[rrr] \ar@[green][ruuuuuuuuu] & & & {} \stackrel{0}{\circ} \ar[rrrrd] \ar@[green][ruuuuuuuuu] & & & \stackrel{0}{\circ}\ar@[purple]@/_/[uull] \ar@/_/[dlllllllllll] \ar@[green][ruuuuuuuuu] \\
&\stackrel{1}{\circ} \ar[rrrrd]\ar@[purple]@/_/[uurrrrrrrrrrrrr] \ar@[green][ruuuuuuuuu] & & & \stackrel{1}{\circ}\ar@[purple]@/^/[uull] \ar[rrrrd]\ar@[green][ruuuuuuuuu] & & & \stackrel{1}{\circ} \ar@[purple]@/^/[uull] \ar@[green][ruuuuuuuuu]\ar@{--}[rrr] & & & {} \stackrel{1}{\circ} \ar[rrrrd] \ar@[green][ruuuuuuuuu]& & & \stackrel{1}{\circ} \ar@[purple]@/_/[uull] \ar@/_/[dlllllllllll] \ar@[green][ruuuuuuuuu] \\
& & \stackrel{2}{\circ}  & & & \stackrel{2}{\circ}  & & & \stackrel{2}{\circ} \ar@[white]@{--}[rr] & & & {} \stackrel{2}{\circ}  & & & \stackrel{2}{\circ} \\
}
}
\normalsize
Here we denote by $\stackrel{d}{\circ}$  the vertex $(\bi,d)$ in $\zZ_\vv \tQ$, so $\xa_{\bi,d}$ is the arrow from $(\bi,d) $ to $(\bi+\be_1,d+1)$, $\xb_{\bi,d}$ is the arrow from $(\bi,d) $ to $(\bi+\be_2,d+1)$ and $\xc_{\bi,d}$ is the arrow from $(\bi,d) $ to $(\bi-\be,d+1)$.

Let $e_{[0,2]} =\sum_{\bi \in\zZ_{\vv} \tQ_0(s,r)[0,2]} e_{\bi}$, the relation set $$\rho(s,r)= \{x\in \rho_{\zZ_{\vv} \tQ(s,r)}| e_{[0,2]} x e_{[0,2]} = x \} = \trho_{comm}(s,r,C(a,b,c))[0]\cup \trho_{zero}(s,r)[0].$$
Since any sequence of relations in $\trho_{comm}(s,r)[0]$ such that the successive pair share an arrow does not form a circle, by changing the representative for the arrows, the coefficient in the commutative relations in $\rho(s,r)$ can all be taken as $-1$, and
\eqqc{tsrcommrel}{\arr{lll}{\rho(s,r)& = &
\{\xa_{\bi+\be_1,1}\xa_{\bi,0},\xb_{\bi+\be_2,1}\xb_{\bi,0}, \xc_{\bi-\be_1,1}\xc_{\bi,0}|\bi \in \zZ/s\zZ\times \zZ/r\zZ\} \\ & &\cup \{\xa_{\bi+\be_2,1}\xb_{\bi,0} - \xb_{\bi+\be_1,1}\xa_{\bi,0}, \xa_{\bi-\be,1}\xc_{\bi,0} - \xc_{\bi+\be_1,1}\xa_{\bi,0},\\ & & \quad \xa_{\bi-\be_1,1}\xc_{\bi,0} - \xc_{\bi+\be_1,1}\xa_{\bi,0} | \bi \in \zZ/s\zZ\times \zZ/r\zZ\}.}}

\begin{pro}\label{2slcrel} If the quotient algebra $\LL(s,t)= kQ(s,t)/ I$ is a $2$-properly-graded algebra, then the ideal $I$ is generated by $\rho(s,r)$.
\end{pro}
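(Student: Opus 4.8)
The plan is to run, on the $\tau$-slice $Q(s,r)$, the same argument that was used on the McKay quiver itself in Proposition~\ref{relQsr2tr}: first show $\rho(s,r)\subseteq I$, then show by a dimension count that $I$ contains nothing more. Two reductions are available at the outset. Since every vertex of $Q(s,r)$ has $\zZ$-degree $0$, $1$ or $2$, there is no path of length $\ge 3$ in $Q(s,r)$; hence $kQ(s,r)=kQ_0\oplus kQ_1\oplus kQ_2$ is finite dimensional and, $Q(s,r)$ being the Gabriel quiver of $\LL(s,r)$, the ideal $I$ is just a subspace of $kQ_2(s,r)$, so the claim is that this subspace equals the span of $\rho(s,r)$. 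Moreover $Q(s,r)$ is a connected component of the ``interval'' $\tau$-slice $\zZ_{\vv}\tQ(s,r)[0,2]$ of the acyclic stable $2$-translation quiver $\zZ_{\vv}\tQ(s,r)$, so its natural relations are the restrictions to $Q(s,r)$ of the relations of $\tL(s,r)$ (by \eqref{relationzvml} and the definition of a complete $\tau$-slice), and $\tL(s,r)$ was already pinned down in Proposition~\ref{relQsr2tr}.

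For the inclusion $\rho(s,r)\subseteq I$, fix a source vertex $(\bi,0)$. Its nine outgoing paths of length $2$ land at six degree-$2$ vertices: the three ``diagonal'' ones $(\bi+\be,2)$, $(\bi-\be_1,2)$, $(\bi-\be_2,2)$, each reached by two of the paths, and the three ``straight'' ones $(\bi+2\be_1,2)$, $(\bi+2\be_2,2)$, $(\bi-2\be,2)$, each reached by a single path. Transporting the relations of $\tL(s,r)$ from $\zZ_{\vv}\tQ(s,r)$ down to the slice forces the three straight composites $\xa_{\bi+\be_1,1}\xa_{\bi,0}$, $\xb_{\bi+\be_2,1}\xb_{\bi,0}$, $\xc_{\bi-\be,1}\xc_{\bi,0}$ into $I$ (these make up $\trho_{zero}(s,r)[0]$), and forces, for each diagonal endpoint, the two competing paths to become linearly dependent in $\LL(s,r)$ with both coefficients nonzero — the non-vanishing of the coefficients being exactly what the nondegeneracy of the pairing $e_{\bi}\tL_1e_{\bi+\be}\times e_{\bi+\be}\tL_2e_{\bi}\to e_{\bi}\tL_3e_{\bi}$ gave in Proposition~\ref{relQsr2tr}. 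Since $Q(s,r)$ is acyclic, no chain of these commutativity relations sharing successive arrows ever closes up into a cycle, so one may rescale the arrows $\xa_{\bi,t},\xb_{\bi,t},\xc_{\bi,t}$ one at a time and normalise all these coefficients to $-1$; after this renormalisation $I$ contains precisely the set $\rho(s,r)$ of \eqref{tsrcommrel}.

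It then remains to see $I=(\rho(s,r))$. Put $B:=kQ(s,r)/(\rho(s,r))$; by the previous step there is a surjection $B\twoheadrightarrow\LL(s,r)$, an isomorphism in degrees $0$ and $1$, so it suffices to check it is injective in degree $2$. A direct computation with $\rho(s,r)$, identical in shape to the one that closes the proof of Proposition~\ref{relQsr2tr}, shows that for each source $(\bi,0)$ the space $e_{(\bj,2)}B_2e_{(\bi,0)}$ is one-dimensional for $(\bj,2)\in\{(\bi+\be,2),(\bi-\be_1,2),(\bi-\be_2,2)\}$ and zero otherwise, so $\dmn_k B_2e_{(\bi,0)}=3$. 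On the other hand, since $\LL(s,r)$ carries the $\tau$-slice structure of $\tL(s,r)$, its degree-two dimensions coincide with those of the complete $\tau$-slice, so $\dmn_k\LL(s,r)_2e_{(\bi,0)}=3$ for every $\bi$ as well; hence the surjection $B\twoheadrightarrow\LL(s,r)$ is dimension preserving, therefore an isomorphism, and $I=(\rho(s,r))$.

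The real work, I expect, is the transport step of the second paragraph, i.e.\ pinning down the zero relations. Mere $2$-properly-gradedness does not by itself force the straight paths $\xa\xa$, $\xb\xb$, $\xc\xc$ into $I$ — a nonzero such path is still a maximal bound path of length $2$ — so one must genuinely use that $\LL(s,r)$ is the $\tau$-slice algebra cut out of the $2$-translation algebra $\tL(s,r)$ of the McKay quiver (equivalently, that its trivial extension is a stable $2$-translation algebra), and make precise the passage from the relations of $\tL(s,r)$ (Proposition~\ref{relQsr2tr}) on $\zZ_{\vv}\tQ(s,r)$ to the relations of its $\tau$-slice $Q(s,r)$. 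Once that is in hand, the commutativity relations, the coefficient normalisation, and the degree-two dimension count are all routine.
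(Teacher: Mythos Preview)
Your approach matches the paper's. The paper gives no separate proof for this proposition: the argument is the preceding paragraph, which restricts the relations $\rho_{\zZ_{\vv}\tQ(s,r)}$ of the acyclic $2$-translation quiver to the slice via $e_{[0,2]}(\cdot)e_{[0,2]}$ and then normalises the coefficients to $-1$ using the acyclicity of $Q(s,r)$. Your two-step scheme (transport the relations of $\tL(s,r)$ to the slice, then do the degree-two dimension count exactly as in Proposition~\ref{relQsr2tr}) is precisely this, made explicit.

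You are also right about the hypothesis. Bare $2$-proper-gradedness on $Q(s,r)$ does not force $\xa_{\bi+\be_1,1}\xa_{\bi,0}$, $\xb_{\bi+\be_2,1}\xb_{\bi,0}$, $\xc_{\bi-\be,1}\xc_{\bi,0}$ into $I$: any nonzero length-two path is already maximal in this quiver. The intended hypothesis is the one stated for the $\Xi$ case in Proposition~\ref{relQtauslc}, namely that $\LL(s,r)$ is a $\tau$-slice algebra with $\dtL(s,r)$ a $2$-translation algebra; this is what makes the transport from $\tL(s,r)$ (and hence Propositions~\ref{commrel} and~\ref{zerorel}) available on the slice. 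With that reading your proof goes through, and nothing more is needed.
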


The quadratic dual quiver $Q^{\perp}(s,r)=(Q_0(s,r), Q_1(s,r), \rho^{\perp}(s,r))$ is a $2$-slice quiver associated to the McKay quiver $\tQ(s,r)$.
It is easy to check that
\eqqc{srcommrel}{\arr{lll}{\rho^{\perp}(s,r) & = & \{\xa_{\bi+\be_2,d+1}\xb_{\bi,d_1} = \xb_{\bi+\be_1,d+1}\xa_{\bi,d}, \xa_{\bi-\be,1}\xc_{\bi,0} + \xc_{\bi+\be_1,1}\xa_{\bi,0},\\ & & \qquad \xa_{\bi-\be_1,1}\xc_{\bi,0} + \xc_{\bi+\be_1,1}\xa_{\bi,0} | \bi \in \zZ/s\zZ\times \zZ/r\zZ \}.}}
\begin{pro}\label{2slcrelp} If the quotient algebra $\GG(s,t)= kQ(s,t)/ I'$ is a $2$-slice algebra then $I'$ is generated by $\rho^{\perp}(s,r)$.
\end{pro}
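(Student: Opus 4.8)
The plan is to reduce the statement to Proposition \ref{2slcrel} by way of quadratic duality. By definition, $\GG(s,r)$ being a $2$-slice algebra means that $Q^{\perp}(s,r)$ is the quadratic dual of a quadratic acyclic $2$-properly-graded quiver $Q(s,r)$ whose trivial extension is a stable $2$-translation algebra; since $Q(s,r)$ is a connected component of $\zZ_{\vv}\tQ(s,r)[0,2]$, it is nicely-graded and hence acyclic, so Proposition \ref{tau-slice} applies and shows that the quadratic dual $\LL(s,r)=\GG(s,r)^{!,op}$ is a $\tau$-slice algebra, in particular a $2$-properly-graded algebra with bound quiver $Q(s,r)$. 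Applying Proposition \ref{2slcrel} to $\LL(s,r)$ then gives that the defining ideal of $\LL(s,r)$ is generated by the set $\rho(s,r)$ displayed in \eqref{tsrcommrel}.

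Next I would invoke the construction of the quadratic dual algebra recalled in Section \ref{sec:pre}: once one knows that the set $\rho^{\perp}(s,r)$ of \eqref{srcommrel} is a spanning set of the orthogonal complement, inside $kQ_2(s,r)$, of the span of $\rho(s,r)$, one has $\GG(s,r)=kQ^{\perp}(s,r)/(\rho^{\perp}(s,r))$ by definition, which is exactly the assertion. So the remaining work is a linear-algebra verification, carried out one ordered vertex-pair at a time: first, that every element of $\rho^{\perp}(s,r)$ pairs to zero against every element of $\rho(s,r)$ under the pairing on $e_{\bj}kQ_2(s,r)e_{\bi}$ that identifies this space with its own dual; and second, a dimension count showing $\dmn_k e_{\bj}kQ_2(s,r)e_{\bi}=|\rho_{\bi,\bj}(s,r)|+|\rho^{\perp}_{\bi,\bj}(s,r)|$ for each pair $\bi,\bj\in Q_0(s,r)$, so that the two sets span complementary subspaces.

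For the orthogonality check I would enumerate the length-two paths between a fixed pair of vertices of $Q(s,r)$ — these come from composing the three arrow families $\xa,\xb,\xc$ lifted to $\zZ_{\vv}\tQ(s,r)[0,2]$ — and observe that the "zero" relations of $\rho(s,r)$ (the paths $\xa\xa$, $\xb\xb$, $\xc\xc$) sit between vertex pairs to which $\rho^{\perp}(s,r)$ contributes nothing, while the "commutative" relations of $\rho(s,r)$, such as $\xa_{\bi+\be_2,1}\xb_{\bi,0}-\xb_{\bi+\be_1,1}\xa_{\bi,0}$, pair to zero against the corresponding sums in $\rho^{\perp}(s,r)$, exactly as in the computation behind Proposition \ref{relQsrAS}. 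The dimension count is then immediate from the explicit description of $Q(s,r)$ and the fact that at most two length-two paths join any ordered pair of its vertices. Assembling these over all vertex pairs yields $I'=(\rho^{\perp}(s,r))$.

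The main obstacle is the bookkeeping in this last verification: one has to track carefully the vertices near the "boundary" of the $\tau$-slice $\zZ_{\vv}\tQ(s,r)[0,2]$, where the returning arrows $\xc$ glue the three levels, and keep straight the normalisation of arrows that let the coefficients in $\rho(s,r)$ — and hence in $\rho^{\perp}(s,r)$ — be taken as $\pm 1$; once the combinatorics of length-two paths in $Q(s,r)$ is laid out explicitly, this is routine.
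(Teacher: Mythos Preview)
Your proposal is correct and follows essentially the same approach the paper takes: the paper gives no explicit proof of Proposition~\ref{2slcrelp}, but the text preceding it (``It is easy to check that'') together with the model provided by the proof of Proposition~\ref{relQsrAS} makes clear that the intended argument is precisely the one you outline---pass to the quadratic dual via Proposition~\ref{2slcrel}, then verify orthogonality and a dimension count for $\rho(s,r)$ and $\rho^{\perp}(s,r)$ on each $e_{\bj}kQ_2(s,r)e_{\bi}$. One minor remark: in $\zZ_{\vv}\tQ(s,r)[0,2]$ every arrow, including those of the $\xc$ family, goes from level $t$ to level $t+1$, so there is no special ``boundary'' gluing by $\xc$ to worry about; the bookkeeping is uniform across the three arrow families.
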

So the relations for the $n$-slice algebra are independent of the parameter set.

\subsection{Relations for $\tQ(\Xi)$}
Let $G(\Xi)$ be a subgroup of $\mathrm{SL}(2,\mathbb C)$ with McKay quiver $\Xi$.
Let $\tQ(\Xi)$ be the McKay quiver obtained by embedding $G(\Xi)$ in $\mathrm{SL}(3,\mathbb C)$ as in Proposition \ref{embedmcq}.
Then $\tQ(\Xi)$ is one of the following quivers.

\tiny$$
\xymatrix@C=0.8cm@R0.5cm{
&&&\stackrel{0}{\circ}\ar@[purple]@(ur,ul)[]\ar@[green]@/^/[ddll]\ar@/^/[ddrr]&&&\\
\tQ(A_l) &&&&&&\\
& \stackrel{1}{\circ}\ar@[purple]@(ur,ul)[] \ar@[green]@/^/[r]\ar@/^/[uurr] & \stackrel{2}{\circ}\ar@[purple]@(ur,ul)[]\ar@/^/[l]\ar@{--}[r] & &\stackrel{l-1}{\circ}\ar@[purple]@(ur,ul)[] \ar@[green]@/^/[r] &\stackrel{l}{\circ}\ar@[purple]@(ur,ul)[] \ar@/^/[l]\ar@[green]@/^/[uull]\\
&&&&&&&\\
}$$\tiny$$\xymatrix@C=0.8cm@R0.5cm{
&\stackrel{1}{\circ}\ar@[purple]@(ur,ul)[]\ar@[green]@/^/[dr]&&&&\stackrel{l-1}{\circ}\ar@[purple]@(ur,ul)[] \ar@/^/[dl]&&\\
\tQ(D_l) &&\stackrel{2}{\circ}\ar@[purple]@(ur,ul)[]\ar@/^/[ul]\ar@/^/[dl] \ar@[green]@/^/[r] & \stackrel{3}{\circ}\ar@[purple]@(ur,ul)[]\ar@/^/[l] \ar@{--}[r]&\stackrel{l-2}{\circ}\ar@[purple]@(ur,ul)[]\ar@[green]@/^/[ur] \ar@[green]@/^/[dr] &\\
&  \stackrel{0}{\circ}\ar@[purple]@(ur,ul)[]\ar@[green]@/^/[ur] & & & &\stackrel{l}{\circ}\ar@[purple]@(ur,ul)[] \ar@/^/[ul]&&\\
&&&&&&&\\
}$$
\tiny$$\xymatrix@C=0.8cm@R0.5cm{
&&&\stackrel{7}{\circ}\ar@[purple]@(ur,ul)[]\ar@/^/[d]&&&\\
\tQ(E_6) &&&\stackrel{6}{\circ}\ar@[purple]@(ur,ul)[]\ar@/^/[d]\ar@[green]@/^/[u]&&&&\\
& \stackrel{1}{\circ}\ar@[purple]@(ur,ul)[] \ar@[green]@/^/[r] & \stackrel{2}{\circ}\ar@[purple]@(ur,ul)[]\ar@/^/[l]\ar@[green]@/^/[r] & \stackrel{3}{\circ}\ar@[purple]@(ur,ul)[]\ar@[green]@/^/[u] \ar@[green]@/^/[r] \ar@/^/[l] &\stackrel{4}{\circ}\ar@[purple]@(ur,ul)[] \ar@[green]@/^/[r] \ar@/^/[l] &\stackrel{5}{\circ}\ar@[purple]@(ur,ul)[] \ar@/^/[l]\\
&&&&&&&\\}$$
\tiny$$ \xymatrix@C=0.8cm@R0.5cm{
\tQ(E_7) &&&&\stackrel{7}{\circ}\ar@[purple]@(ur,ul)[]\ar@/^/[d]&&&&\\
& \stackrel{0}{\circ}\ar@[purple]@(ur,ul)[] \ar@[green]@/^/[r] & \stackrel{1}{\circ}\ar@[purple]@(ur,ul)[]\ar@/^/[l]\ar@[green]@/^/[r] & \stackrel{2}{\circ}\ar@[purple]@(ur,ul)[] \ar@[green]@/^/[r] \ar@/^/[l] &\stackrel{3}{\circ}\ar@[purple]@(ur,ul)[]\ar@[green]@/^/[u] \ar@[green]@/^/[r] \ar@/^/[l]&\stackrel{4}{\circ}\ar@[purple]@(ur,ul)[] \ar@[green]@/^/[r] \ar@/^/[l] &\stackrel{5}{\circ}\ar@[purple]@(ur,ul)[] \ar@[green]@/^/[r] \ar@/^/[l]&\stackrel{6}{\circ}\ar@[purple]@(ur,ul)[] \ar@/^/[l]\\
&&&&&&&\\
}$$
\tiny$$ \xymatrix@C=0.8cm@R0.5cm{
\tQ(E_8) &&&\stackrel{8}{\circ}\ar@[purple]@(ur,ul)[]\ar@/^/[d]&&&&\\
& \stackrel{1}{\circ}\ar@[purple]@(ur,ul)[] \ar@[green]@/^/[r] & \stackrel{2}{\circ}\ar@[purple]@(ur,ul)[]\ar@/^/[l]\ar@[green]@/^/[r] & \stackrel{3}{\circ}\ar@[purple]@(ur,ul)[]\ar@[green]@/^/[u] \ar@[green]@/^/[r] \ar@/^/[l] &\stackrel{4}{\circ}\ar@[purple]@(ur,ul)[] \ar@[green]@/^/[r] \ar@/^/[l] &\stackrel{5}{\circ}\ar@[purple]@(ur,ul)[] \ar@[green]@/^/[r] \ar@/^/[l]&\stackrel{6}{\circ}\ar@[purple]@(ur,ul)[] \ar@[green]@/^/[r] \ar@/^/[l] &\stackrel{7}{\circ}\ar@[purple]@(ur,ul)[] \ar@[green]@/^/[r] \ar@/^/[l]&\stackrel{9}{\circ}\ar@[purple]@(ur,ul)[] \ar@/^/[l]
}
$$\normalsize
The vertices for $\tQ(A_l)$ are indexed by $\zZ/(l+1)\zZ$.
Write $\xc_i$ for the loop at the vertex $i$, $\xa_{i,j}$ for the arrow from $i$ to $j$ with $j>i$ ($j=i+1$ in the case $A_n$) and $\xb_{i,j}$ for the arrow from $i$ to $j$ with $j<i$ ($j=i-1$ in the case $A_n$).
For the sake of writing down the arrows properly, we do not follow the convention to index the vertices.

We have immediate the following on the arrows of these McKay quivers.

\begin{lem}\label{McKay_arrows}
Let $\tQ(\Xi)$ be the McKay quiver defined above for $\Xi = A_l, D_l, E_6, E_7, E_8$, $l\ge 5$.
\begin{enumerate}
\item There is a loop at each vertex of $\tQ(\Xi)$.

\item There is at most one arrow from $i$ to $j$ for any two vertices $i,j$ in $\tQ(\Xi)$.

\item There is an arrow $\xa_{i,j}$ from $i$ to $j$ if and only if there is an arrow $\xb_{j,i}$ from $j$ to $i$.

\item There are at most $3$ arrows leaving a vertex, and at most $3$ arrows heading to a vertex.
\end{enumerate}
\end{lem}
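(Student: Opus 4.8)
The plan is to reduce everything to the description of $\mathrm{SL}(2,\mathbb{C})$-McKay quivers recalled at the start of Section~\ref{sec:mckayq}. By Proposition~\ref{embedmcq}, $\tQ(\Xi)$ is obtained from the McKay quiver $\tQ^2(G(\Xi))$ of $G(\Xi)\subset\mathrm{SL}(2,\mathbb{C})$ by adjoining exactly one loop at each vertex. Since the McKay quivers of finite subgroups of $\mathrm{SL}(2,\mathbb{C})$ are precisely the double quivers of Euclidean quivers, $\tQ^2(G(\Xi))$ is the double of the Euclidean (extended Dynkin) quiver of type $\Xi$: for each edge $\{i,j\}$ of that diagram it carries one arrow $i\to j$ and one arrow $j\to i$, it carries no loops, and it carries no other arrows. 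The symmetry between the two orientations also follows directly from the fact that the defining $2$-dimensional representation $V$ of $G(\Xi)$ is self-dual: since $\wedge^2 V$ is the trivial representation, $V^*\cong V$, so the multiplicities in $V\otimes S_i=\bigoplus_j S_j^{a_{i,j}}$ satisfy $a_{i,j}=a_{j,i}$. With this explicit picture of $\tQ(\Xi)$ in hand — a loop-free double of an extended Dynkin quiver, with one loop added at every vertex — each of the four assertions is a short inspection, and each can in fact simply be read off from the five quivers displayed above.

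I would then check the four assertions in turn. Assertion (1) is immediate, since the extended Dynkin diagram of each listed type is loop-free, so Proposition~\ref{embedmcq} places exactly one loop $\xc_i$ at each vertex. Assertion (2) for $i=j$ is the same statement, and for $i\neq j$ it records that the extended Dynkin diagrams of types $A_l,D_l,E_6,E_7,E_8$ are simply laced — at most one edge between any two vertices. Assertion (3) for non-loop arrows is precisely the symmetry $a_{i,j}=a_{j,i}$ noted above, and since the loops added by Proposition~\ref{embedmcq} are their own reverses, (3) holds for all arrows. Assertion (4) is the statement that the extended Dynkin diagram of type $A_l,D_l$ (with $l\ge 5$), $E_6,E_7,E_8$ has maximum vertex degree $3$; combined with the simply-laced property this bounds by $3$, in each direction, the number of arrows joining a given vertex to the other vertices.

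There is no genuine obstacle here: the proof is a finite check grounded in the classical rank-$2$ McKay correspondence. The one point that merits attention is the role of the hypothesis $l\ge 5$ in assertion (4): it is imposed precisely to exclude $\tilde D_4$, whose central node has degree $4$ (and, more than is strictly necessary, to avoid the smallest type-$A$ cases such as $\tilde A_1$, where the doubled extended Dynkin quiver would acquire a multiple arrow and violate assertion (2)). One should also be careful, in assertion (4), to count the ubiquitous loop separately from the arrows joining distinct vertices; with that convention the bound $3$ is sharp, being attained by the branch vertices of $\tQ(D_l)$ and of $\tQ(E_6),\tQ(E_7),\tQ(E_8)$.
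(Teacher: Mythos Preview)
Your proof is correct and, in fact, considerably more thorough than the paper's own treatment: the paper offers no proof at all, merely introducing the lemma with the phrase ``We have immediate the following on the arrows of these McKay quivers'' and relying on inspection of the five displayed quivers. Your argument supplies the conceptual backing --- Proposition~\ref{embedmcq} together with the self-duality of the standard $2$-dimensional representation and the simply-laced, degree-$\le 3$ structure of the relevant extended Dynkin diagrams --- and you correctly flag the two subtleties the paper leaves implicit: that the loop $\xc_i$ is to be excluded from the count in assertion~(4) (as confirmed by the paper's later use of $\tQ_{01},\tQ_{02},\tQ_{03}$ and the $\mu_{i,j}$ notation), and that the hypothesis $l\ge 5$ is what rules out the degree-$4$ branch point of $\tilde D_4$.
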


Let $\tL(\Xi) \simeq k\tQ(\Xi)/I(\Xi)$ be a $2$-translation algebra for the ideal $I(\Xi)$, we determine relations $\trho(\Xi)$ for $\Xi \neq A_2, A_3,A_4,D_4$.

\begin{lem}\label{diffih}
If $i\neq h$, then
\eqqcn{}{\xa_{j,h}\xa_{i,j}, \xb_{j,h}\xb_{i,j}, \xa_{j,h}\xb_{i,j}, \xb_{j,h}\xa_{i,j}\in I(\Xi)} if such paths exist.
\end{lem}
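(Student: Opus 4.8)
The plan is to deduce Lemma \ref{diffih} directly from Proposition \ref{zerorel}, which annihilates any length-$2$ path $i\to j\to h$ through three distinct vertices as soon as $\tQ(\Xi)$ carries no arrow $h\to i$. Thus the whole argument reduces to two observations: that the three vertices occurring in such a path are pairwise distinct, and that there is no arrow back from $h$ to $i$.

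First I would note that each of $\xa_{i,j},\xb_{i,j},\xa_{j,h},\xb_{j,h}$ is an arrow between \emph{distinct} vertices; the loops of $\tQ(\Xi)$ are the separate arrows $\xc_i$ (by the description of the quivers and Lemma \ref{McKay_arrows}(1)). Hence $i\neq j$ and $j\neq h$, and together with the hypothesis $i\neq h$ this makes $i,j,h$ pairwise distinct, exactly the configuration required in Proposition \ref{zerorel}.

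Next I would show that $\tQ(\Xi)$ has no arrow from $h$ to $i$. By parts (2) and (3) of Lemma \ref{McKay_arrows}, for distinct vertices an arrow $h\to i$ exists if and only if $h$ and $i$ are joined by an edge in the underlying graph of $\tQ(\Xi)$ obtained by deleting the loops. The existence of the path $i\to j\to h$ shows $i$ is adjacent to $j$ and $j$ is adjacent to $h$; were $h$ also adjacent to $i$, the distinct vertices $i,j,h$ would form a $3$-cycle in that graph. But the loopless underlying graph of $\tQ(\Xi)$ is a tree when $\Xi\in\{D_l,E_6,E_7,E_8\}$ and a single cycle on $l+1\ge 6$ vertices when $\Xi=A_l$ with $l\ge 5$; in neither case is there a $3$-cycle. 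Hence there is no arrow $h\to i$, and Proposition \ref{zerorel} applies to each of the paths $\xa_{j,h}\xa_{i,j}$, $\xb_{j,h}\xb_{i,j}$, $\xa_{j,h}\xb_{i,j}$, $\xb_{j,h}\xa_{i,j}$ that actually exists, placing it in $I(\Xi)$. (In type $A_l$ the mixed paths $\xa_{j,h}\xb_{i,j}$ and $\xb_{j,h}\xa_{i,j}$ would force $h=i$, so only the first two types arise there; this is the only point at which the indexing conventions for $\xa$ and $\xb$ need care.)

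There is no substantial obstacle beyond this bookkeeping: the real content is the triangle-freeness of the diagrams involved, which is exactly where the standing restriction $l\ge 5$ (and the exclusion of $A_2$, whose McKay graph $\tilde A_2$ is a triangle) enters. Everything else is a direct citation of Proposition \ref{zerorel} together with Lemma \ref{McKay_arrows}(1)--(3).
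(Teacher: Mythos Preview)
Your argument is correct and follows the same route as the paper's proof: both reduce the claim to Proposition \ref{zerorel} by observing from the explicit shape of the McKay quivers $\tQ(\Xi)$ that there is no arrow from $h$ back to $i$. You simply make explicit what the paper leaves as ``one sees from the quivers'', namely the triangle-freeness of the underlying (loopless) graph, and you correctly note where the restriction $l\ge 5$ enters.
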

\begin{proof}
If $\xa_{i,j},\xa_{j,h} \in \tQ_1(\Xi)$, one sees from the quivers that $\xa_{j,h}\xa_{i,j}$ is the only path of length $2$ from $i$ to $h$, and there is no arrow from $h$ to $i$.
By using Proposition \ref{zerorel}, we have that \eqqcn{}{\xa_{j,h}\xa_{i,j} \in I(\Xi).}

Similarly we have
\eqqcn{}{\xb_{j,h}\xb_{i,j}, \xa_{j,h}\xb_{i,j}, \xb_{j,h}\xa_{i,j}\in I(\Xi)} if such paths exist.
\end{proof}

Write \eqqcn{}{\arr{lll}{ \trho_{11}(\Xi) &=& \{\xa_{j,h} \xa_{i,j}|| i,h ,j \in \tQ_0(\Xi), i\neq h \},\\
\trho_{12}(\Xi) &=& \{ \xa_{j,h}\xb_{i,j}| i,h ,j \in \tQ_0(\Xi), i\neq h  \} \\
\trho_{21}(\Xi) &=& \{ \xa_{j,h} \xb_{i,j}| i,h ,j \in \tQ_0(\Xi), i\neq h\}\\
\trho_{22}(\Xi) &=& \{ \xb_{j,h}\xb_{i,j} | i,h ,j \in \tQ_0(\Xi), i\neq h  \}\\
}
}
Take \eqqc{twoarrows}{ \trho_p(\Xi) = \trho_{11}(\Xi) \cup \trho_{12}(\Xi) \cup\trho_{21}(\Xi) \cup \trho_{22}(\Xi)    .}
As a corollary of Lemma \ref{diffih}, we get the following.
\begin{pro}\label{rho_diffih}
$\trho_p(\Xi) \subseteq I(\Xi)$.
\end{pro}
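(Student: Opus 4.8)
The plan is to deduce Proposition \ref{rho_diffih} directly from Lemma \ref{diffih} together with the definitions of the sets $\trho_{11}(\Xi), \trho_{12}(\Xi), \trho_{21}(\Xi), \trho_{22}(\Xi)$. Since $\trho_p(\Xi)$ is by \eqref{twoarrows} the union of these four sets, it suffices to show that each of them is contained in $I(\Xi)$. Each such set consists, by definition, of paths of length $2$ of one of the four shapes $\xa_{j,h}\xa_{i,j}$, $\xa_{j,h}\xb_{i,j}$, $\xb_{j,h}\xa_{i,j}$ (written as $\xa_{j,h}\xb_{i,j}$ in the current labelling of $\trho_{21}$), and $\xb_{j,h}\xb_{i,j}$, where in every case $i\neq h$. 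So I would simply observe that a generic element of $\trho_p(\Xi)$ is exactly a path of length $2$ whose source $i$ and target $h$ are distinct, hence is covered by Lemma \ref{diffih}.

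First I would fix a path $p\in\trho_p(\Xi)$. Then $p$ lies in one of the four constituent sets, and in each case $p$ is of the form $\gamma_{j,h}\,\delta_{i,j}$ with $\gamma,\delta\in\{\xa,\xb\}$ and $i\neq h$; moreover $p$ is a genuine path in $\tQ(\Xi)$ by the way these sets were constructed (only existing compositions are included). Lemma \ref{diffih} asserts precisely that each of $\xa_{j,h}\xa_{i,j}$, $\xb_{j,h}\xb_{i,j}$, $\xa_{j,h}\xb_{i,j}$, $\xb_{j,h}\xa_{i,j}$ lies in $I(\Xi)$ whenever the path exists and $i\neq h$. Hence $p\in I(\Xi)$. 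Since $p$ was arbitrary, $\trho_p(\Xi)\subseteq I(\Xi)$, which is the assertion.

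There is essentially no obstacle here: the statement is a bookkeeping corollary, and the only thing to be careful about is matching the four named subsets to the four path-shapes in Lemma \ref{diffih} (noting in particular that $\trho_{21}(\Xi)$ as displayed is also written with $\xa_{j,h}\xb_{i,j}$, so one should read it as the set of length-$2$ paths of the ``$\xb$ then $\xa$'' type, i.e.\ $\xb_{j,h}\xa_{i,j}$ in the notation of Lemma \ref{diffih}, or interpret the labels consistently). The hypothesis $\Xi\neq A_2,A_3,A_4,D_4$ is already built into the running assumptions of this subsection and is what guarantees that the relevant vertices can indeed be distinct; no further case analysis is needed beyond invoking Lemma \ref{diffih}.
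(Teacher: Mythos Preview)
Your proposal is correct and follows exactly the paper's approach: the paper states the proposition as an immediate corollary of Lemma \ref{diffih}, and your argument spells out precisely that deduction. Your observation about the apparent typo in $\trho_{21}(\Xi)$ is also apt, but does not affect the argument.
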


Let $C_a = \{a_{i,j}\in k^*| \xa_{i,j}\in\tQ_{1}(\Xi)\}$, that is, choose a nonzero number $a_{i,j}$ for each arrow $\xa_{i,j}$, and set \eqqc{onearrow}{\trho_a(\Xi, C_a) =\{\xa_{i,j}\xc_i - \xc_j\xa_{i,j},  \xb_{j,i}\xc_j -  \xc_i\xb_{j,i} | \xa_{i,j}\in \tQ_1, a_{i,j}\in C_a,i<j \}.}

\begin{pro}\label{rho_onearrow}
By choosing the representatives of the arrows suitably, we have a set $ C_a\subset k^*$ such that
$$\trho_a(\Xi,C_a)  \subseteq I(\Xi).$$
\end{pro}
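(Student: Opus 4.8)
The plan is to compute, for each pair of adjacent vertices $i,j$ of $\tQ(\Xi)$, the one–dimensional length–two ``transfer'' spaces $e_j\tL_2e_i$ and $e_i\tL_2e_j$ in $\tL=\tL(\Xi)$, and to read the relations $\trho_a$ off from these. First I would fix adjacent vertices with, say, $i<j$; by Lemma \ref{McKay_arrows} there is then a unique arrow $\xa_{i,j}\colon i\to j$, a unique arrow $\xb_{j,i}\colon j\to i$, and loops $\xc_i,\xc_j$. Any length–two path $i\to\ell\to j$ with $\ell\notin\{i,j\}$ is a product of two non-loop arrows with distinct endpoints $i\neq j$, hence vanishes in $\tL$ by Lemma \ref{diffih} (equivalently Proposition \ref{zerorel}), so the only surviving length–two paths $i\to j$ are $\xa_{i,j}\xc_i$ and $\xc_j\xa_{i,j}$. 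By Lemma 2.1 of \cite{gw00}, $\dmn_k e_j\tL_2e_i=\dmn_k e_i\tL_1e_j=1$, so these two paths are linearly dependent in $\tL$; that is, there is a scalar $a_{i,j}$ with
\[
\xa_{i,j}\xc_i-a_{i,j}\,\xc_j\xa_{i,j}\in I(\Xi),
\]
and the identical reasoning applied to $\xb_{j,i}\colon j\to i$ gives $e_i\tL_2e_j=k\,\xb_{j,i}\xc_j=k\,\xc_i\xb_{j,i}$ together with a relation $\xb_{j,i}\xc_j-b_{j,i}\,\xc_i\xb_{j,i}\in I(\Xi)$.

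The heart of the matter is to show $a_{i,j},b_{j,i}\in k^{*}$, i.e.\ that in each of these one–dimensional spaces neither displayed path is already $0$ in $\tL$. Here I would use that a stable $2$–translation algebra is graded self–injective of Loewy length $4$ with trivial Nakayama permutation, so $e_v\tL_3e_v\cong e_v\tL_0e_v\cong k$ for every vertex $v$ and the multiplication pairings $e_w\tL_se_v\times e_v\tL_{3-s}e_w\to e_w\tL_3e_w$ are perfect on the relevant one–dimensional spaces. Suppose $\xa_{i,j}\xc_i=0$; then $(\xa_{i,j}\xc_i)\xb_{j,i}=0$, and since $\xa_{i,j}$ spans $e_j\tL_1e_i$ the perfect pairing with $e_i\tL_2e_j$ forces $\xc_i\xb_{j,i}=0$; then $\xc_i(\xb_{j,i}\xa_{i,j})=0$, and the perfect pairing $e_i\tL_1e_i\times e_i\tL_2e_i\to e_i\tL_3e_i$ (with $\xc_i$ spanning $e_i\tL_1e_i$) forces $\xb_{j,i}\xa_{i,j}=0$. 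Iterating this along the arrows of $\tQ(\Xi)$ propagates the vanishing, and since $\tQ(\Xi)$ is connected and — for $\Xi\neq A_2,A_3,A_4,D_4$ — large enough for the zero-relation lemmas and this propagation to close up, one arrives at a contradiction with the non-vanishing of some socle $e_v\tL_3e_v$, equivalently with $\dmn_k e_v\tL_2e_v=\dmn_k e_v\tL_1e_v=1$. Hence $a_{i,j}\neq0$, and symmetrically $b_{j,i}\neq0$. I expect this non-vanishing step to be the main obstacle; an alternative route is to deduce it from the description of the relations of the AS–regular quadratic dual $\tG(\Xi)$ (cf.\ Theorem 5.1 of \cite{m99}), since AS–regularity forces both monomials to occur in the defining relation.

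Finally I would normalise. Rescaling the loops $\xc_v$ by nonzero scalars and choosing arrow representatives along a spanning tree of the underlying graph of $\tQ(\Xi)$ absorbs all but finitely many of the scalars $a_{i,j},b_{j,i}$ into the arrows (on $\tQ(A_l)$ the oriented circle obstructs absorbing them all, so a genuine residual parameter survives); the surviving nonzero scalars then constitute a set $C_a\subset k^{*}$, and with this choice the relations produced above are precisely the elements of $\trho_a(\Xi,C_a)$. This gives $\trho_a(\Xi,C_a)\subseteq I(\Xi)$, as claimed.
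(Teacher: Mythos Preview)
Your overall structure matches the paper's: establish linear dependence of the two length-two paths using the one–dimensionality of $e_j\tL_2e_i$, argue both coefficients are nonzero, then rescale arrows. The two places where you diverge from the paper are the non-vanishing step and the normalisation.

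For non-vanishing, the paper does not propagate. It simply invokes the non-degenerate bilinear form $e_j\tL_1e_i\times e_i\tL_2 e_j\to e_j\tL_3 e_j\cong k$ coming from the stable $2$-translation (Frobenius) structure, exactly as in the proof of Proposition~\ref{relQsr2tr}, to conclude in one step that neither monomial $\xa_{i,j}\xc_i$, $\xc_j\xa_{i,j}$ (resp.\ $\xb_{j,i}\xc_j$, $\xc_i\xb_{j,i}$) can vanish. Your chain of implications $\xa_{i,j}\xc_i=0\Rightarrow\xc_i\xb_{j,i}=0\Rightarrow\xb_{j,i}\xa_{i,j}=0$ is valid, but it stays inside the pair $\{i,j\}$ and does not actually ``propagate along the arrows of $\tQ(\Xi)$'' to force a socle to vanish: the space $e_i\tL_2e_i$ still has the contributions $\xc_i^2$ and $\zeta_{j',i}\mu_{i,j'}$ from the other neighbours $j'$, so no contradiction arises this way. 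This is the gap you yourself flag; the fix is to use the single pairing directly rather than iterating.

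For the normalisation, the paper is more specific and more economical than your spanning-tree argument: it rescales \emph{only the loops} $\xc_j$, walking outward from a chosen root vertex ($0$ for $A_l,D_l,E_7$; the branch end for $E_6,E_8$), so that every $b_{j,i}$ becomes $1$. The $a'_{i,j}$ are then left as they are and collected into $C_a$. In particular there is no attempt to absorb the $a_{i,j}$, so the cycle obstruction you mention for $A_l$ is irrelevant to what is being claimed: the $b$-normalisation is all that is needed for the form of $\trho_a(\Xi,C_a)$.
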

\begin{proof}
By Lemma \ref{McKay_arrows},  there is an arrow $\xa_{i,j}$ from $i$ to $j$ if and only if there is an arrow $\xb_{j,i}$ from $j$ to $i$ in $\tQ(\Xi)$.
By Proposition \ref{commrel}, we see that for each arrow $\xa_{i,j}$, we have that \eqqcnn{}{\xb_{j,i}\xc_j, \xc_i\xb_{j,i}} are bound paths and \eqqcn{}{\xb_{j,i}\xc_j - b_{j,i} \xc_i\xb_{j,i} \in I(\Xi),} for some $b_{j,i}\neq 0$, similar to the proof of Proposition \ref{relQsr2tr}.
Now apply Proposition \ref{commrel} for the arrow  $\xb_{j,i}$, we have  that \eqqcnn{}{\xa_{i,j}\xc_j, \xc_j\xa_{i,j}} are bound paths and \eqqcn{}{\xa_{i,j}\xc_i - a'_{i,j} \xc_j\xa_{i,j} \in I(\Xi),} for some $a'_{i,j}\neq 0$.
Take $C_a =\{a'_{i,j}|\xb_{j,i}\in \tQ_1(\Xi)\}$.

Starting from $i=0$ in cases $\Xi= A_l,D_l,E_7$ and from $i=0$ in cases $\Xi= E_6,E_8$, by changing the representatives of arrows $\xc_j$ by $b_{j,i}\xc_j $ one by one,  we get  $b_{j,i} = 1$ for all $\xa_{i,j}$.
That is, $$\xb_{j,i}\xc_j -  \xc_i\xb_{j,i} \in I(\Xi)$$  for  all arrows $\xa_{i,j}$ in $\tQ(\Xi)$.

This proves that by choosing the representatives of the arrows suitably, we have  $\trho_a(\Xi, C_a)  \subseteq I(\Xi).$
\end{proof}

Write $\tQ^{i,j}_t(\Xi)$ for the set of paths of length $t$ from $i$ to $j$ in $\tQ(\Xi)$.
The set $\tQ^{i,j}_t(\Xi)$ is  a orthogonal basis of the subspace $k \tQ^{i,j}_t(\Xi)$ spanned by the paths of length $t$ in $k\tQ(\Xi)$.
We have immediately the following.
\begin{lem}\label{orthogonal_in_ij}
For each arrow $\xa_{i,j}$ and nonzero $a_{i,j}\in k^*$,  $a_{i,j}\xa_{i,j}\xc_i +\xc_j\xa_{i,j},  \xb_{j,i}\xc_j + \xc_i\xb_{j,i}$ is a basis of the orthogonal subspace of $\{\xa_{i,j}\xc_i - a_{i,j} \xc_j \xa_{i,j},  \xb_{j,i}\xc_j - \xc_i\xb_{j,i}\}$ in the space $ k\tQ^{i,j}_2(\Xi) $ spanned by paths of length $2$ of $k\tQ(\Xi)$.
\end{lem}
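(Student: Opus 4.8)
The plan is to pin down the space $k\tQ^{i,j}_2(\Xi)$ of length-two paths from $i$ to $j$ completely, and then extract its orthogonal complement by a one-line pairing computation. First I would observe that, since $\xa_{i,j}\in \tQ_1(\Xi)$, the vertices $i$ and $j$ are adjacent in $\tQ(\Xi)$; reading off from the explicit quivers $\tQ(\Xi)$ displayed above — whose underlying graph, after forgetting loops and orientation, is a cycle of length $l+1\ge 6$ for type $A_l$ and a tree for the types $D_l, E_6, E_7, E_8$, hence triangle-free — there is no third vertex $h\ne i,j$ carrying both an arrow $i\to h$ and an arrow $h\to j$. Combined with Lemma \ref{McKay_arrows} (a single loop $\xc_i$ at each vertex, at most one arrow between any ordered pair of vertices, and the arrow $\xa_{i,j}$ present exactly when $\xb_{j,i}$ is), this shows that the only length-two paths from $i$ to $j$ are $\xa_{i,j}\xc_i$ (routed through $i$) and $\xc_j\xa_{i,j}$ (routed through $j$); these have distinct intermediate vertices, hence are linearly independent, so $k\tQ^{i,j}_2(\Xi)=k\,\xa_{i,j}\xc_i\oplus k\,\xc_j\xa_{i,j}$ is two-dimensional, and symmetrically $k\tQ^{j,i}_2(\Xi)=k\,\xb_{j,i}\xc_j\oplus k\,\xc_i\xb_{j,i}$.

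Next I would use the pairing on $k\tQ_2(\Xi)$ under which the set of length-two paths is its own dual basis, so $p(p')=\delta_{p,p'}$ for paths $p,p'$ and paths with different endpoints pair to zero; in particular the two orthogonality statements in the lemma are to be understood in $k\tQ^{i,j}_2(\Xi)$ and in $k\tQ^{j,i}_2(\Xi)$ respectively, and can be checked independently. Writing a general element of $k\tQ^{i,j}_2(\Xi)$ as $\xl\,\xa_{i,j}\xc_i+\mu\,\xc_j\xa_{i,j}$, the condition that it pair to zero with $\xa_{i,j}\xc_i-a_{i,j}\xc_j\xa_{i,j}$ reads $\xl-a_{i,j}\mu=0$; since $a_{i,j}\ne 0$, the solution space is the line spanned by $a_{i,j}\,\xa_{i,j}\xc_i+\xc_j\xa_{i,j}$ (take $\mu=1$). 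The identical computation inside $k\tQ^{j,i}_2(\Xi)$ gives that the orthogonal complement of $\xb_{j,i}\xc_j-\xc_i\xb_{j,i}$ is the line spanned by $\xb_{j,i}\xc_j+\xc_i\xb_{j,i}$. As both spanning vectors are nonzero and each complement is one-dimensional, the two displayed elements form a basis of the orthogonal subspace, which is the assertion.

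The pairing step is purely routine arithmetic; the only part requiring genuine care is the first one, namely verifying that $k\tQ^{i,j}_2(\Xi)$ is exactly two-dimensional. I expect this to be the main (though still minor) obstacle: it hinges on the absence of $3$-cycles in the underlying graph of $\tQ(\Xi)$, a feature of the large Dynkin types we work with ($l\ge 5$), so the safe route is to argue type-by-type from the displayed pictures of $\tQ(\Xi)$ together with Lemma \ref{McKay_arrows}, rather than to seek a uniform structural statement.
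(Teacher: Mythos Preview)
Your argument is correct and is precisely the expected elaboration of what the paper records only as ``immediate'': identify $k\tQ^{i,j}_2(\Xi)$ (respectively $k\tQ^{j,i}_2(\Xi)$) as the $2$-dimensional span of $\xa_{i,j}\xc_i,\ \xc_j\xa_{i,j}$ (respectively $\xb_{j,i}\xc_j,\ \xc_i\xb_{j,i}$) using the triangle-freeness of the underlying Dynkin graph for $l\ge 5$, then read off the orthogonal line via the path pairing. The paper offers no further detail, so your write-up simply supplies the missing verification.
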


Write \small\eqqc{onearrowp}{{\trho_a}^\perp(\Xi,C_a) =\{a_{i,j}\xa_{i,j}\xc_i + \xc_j\xa_{i,j}, \xb_{j,i}\xc_j + \xc_i\xb_{j,i} | \xa_{i,j}\in \tQ_1, a_{i,j}\in C_a, i<j \}.}\normalsize

\medskip

Now consider $\trho_{i,i}(\Xi)$.
By Lemma \ref{McKay_arrows}, for each vertex $i$ in $\tQ(\Xi)$, there is a loop at $i$ and the number of arrows heading to $i$ and the number of arrows leaving  $i$ are the same, and there are at most $3$ arrows leaving $i$.

Fix $i\in \tQ_0(\Xi)$, write \eqqc{defmuzeta}{\mu_{i,j}= \left\{\arr{ll}{\xa_{i,j} & i<j,\\ \xb_{i,j} & i>j, }\right.\mbox{ and }\zeta_{j,i}= \left\{\arr{ll}{\xb_{j,i} & i<j,\\ \xa_{j,i} & i>j. }\right.}
Then $\mu_{i,j}$ is an arrow starting at $i$  and $\zeta_{j,i}$ is an arrow heading to $i$.

Consider the following cases.

\begin{lem}\label{loop_one_arrow}
Assume that there is only  one arrow $\mu_{i,j}$ leaves $i$.
\begin{enumerate}
\item If $\xc_i^2 \not\in I(\Xi)$, then there is a $c_i\in k^*$, such that $\xc_i^2 -c_i\zeta_{j,i}\mu_{i,j} \in I(\Xi)$.

\item We have $|\tQ^{i,i}_2(\Xi)|=2$ and for any $c_i\in k^*$ \eqqc{orthbasi}{\{\xc_i^2 -c_i\zeta_{j,i}\mu_{i,j}, c_i\xc_i^2 +\zeta_{j,i} \mu_{i,j} \}} is a orthogonal basis for $kQ^{i,i}_2(\Xi)$.
\end{enumerate}
\end{lem}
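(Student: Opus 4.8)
The plan is to follow the template of Proposition~\ref{relQsr2tr}: read off the length-$2$ paths through the vertex $i$ from the picture of $\tQ(\Xi)$, use the $2$-translation identity to see that there is a single quadratic relation supported at $(i,i)$, and then use the non-degenerate bilinear form coming from the self-injective structure to see that this relation actually mixes $\xc_i^2$ and $\zeta_{j,i}\mu_{i,j}$.

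First I would prove $|\tQ^{i,i}_2(\Xi)|=2$. By Lemma~\ref{McKay_arrows}(1) there is a loop $\xc_i$ at $i$; by hypothesis $\mu_{i,j}$ is the only non-loop arrow leaving $i$; and by Lemma~\ref{McKay_arrows}(3) the arrow $\zeta_{j,i}$ is the only non-loop arrow entering $i$. Hence a path $i\to i$ of length $2$ must factor through $i$ or through $j$, so it is $\xc_i^2$ or $\zeta_{j,i}\mu_{i,j}$; thus $\tQ^{i,i}_2(\Xi)=\{\xc_i^2,\zeta_{j,i}\mu_{i,j}\}$, a $2$-element set which, as part of the path basis, is its own dual basis of $k\tQ^{i,i}_2(\Xi)$. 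Next, since $e_i$ carries a single loop, $\dmn_k e_i\tL_1 e_i=1$, so Lemma~2.1 of \cite{gw00} gives $\dmn_k e_i\tL_2 e_i=1$ (equivalently, by Proposition~\ref{commrel}(2) the images of $\xc_i^2$ and $\zeta_{j,i}\mu_{i,j}$ in $\tL$ are proportional). Therefore the degree-$2$ part $e_i I(\Xi)_2 e_i$ of the relations at $(i,i)$ is a single line $k(\lambda\xc_i^2+\nu\zeta_{j,i}\mu_{i,j})$ with $(\lambda,\nu)\neq(0,0)$. Since $\xc_i^2\notin I(\Xi)$ this line is not $k\xc_i^2$, so $\nu\neq 0$; and to get $\lambda\neq 0$, i.e. $\zeta_{j,i}\mu_{i,j}\neq 0$ in $\tL$, I would invoke the non-degenerate bilinear form $e_i\tL_1 e_j\times e_j\tL_2 e_i\to e_i\tL_3 e_i\cong k$ (and its companion $e_i\tL_2 e_j\times e_j\tL_1 e_i\to e_i\tL_3 e_i$) furnished by the self-injectivity of $\tL$ with trivial Nakayama permutation — precisely the device producing $c_{\bi}\neq 0\neq c'_{\bi}$ in the proof of Proposition~\ref{relQsr2tr}. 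Concretely, $\mu_{i,j}$ being the only non-loop arrow out of $i$ forces $e_j\tL_2 e_i$ to be $1$-dimensional, spanned by the image of one of $\mu_{i,j}\xc_i$, $\xc_j\mu_{i,j}$; pairing $\zeta_{j,i}$ against this generator is nonzero in $e_i\tL_3 e_i$, and chasing this (together with the parallel analysis of $e_i\tL_2 e_j$, spanned by one of $\xc_i\zeta_{j,i}$, $\zeta_{j,i}\xc_j$) back to a cyclic word at $i$ of length $3$ forces $\zeta_{j,i}\mu_{i,j}\neq 0$. Granting this, $\zeta_{j,i}\mu_{i,j}=c'_i\xc_i^2$ in $\tL$ with $c'_i\in k^*$, so after rescaling the relation line is spanned by $\xc_i^2-c_i\zeta_{j,i}\mu_{i,j}$ with $c_i=(c'_i)^{-1}\in k^*$, which proves~(1).

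For~(2), using the self-dual basis $\{\xc_i^2,\zeta_{j,i}\mu_{i,j}\}$ of $k\tQ^{i,i}_2(\Xi)$, the two elements $\xc_i^2-c_i\zeta_{j,i}\mu_{i,j}$ and $c_i\xc_i^2+\zeta_{j,i}\mu_{i,j}$ pair to $c_i-c_i=0$, and their coefficient matrix $\mat{cc}{1 & -c_i\\ c_i & 1}$ has determinant $1+c_i^2\neq 0$ since $\mathrm{char}\,k=0$; hence they are linearly independent and form an orthogonal basis of the $2$-dimensional space $k\tQ^{i,i}_2(\Xi)$, so this part is a one-line check once (1) is available. The main obstacle is the step buried in~(1): ruling out the degenerate alternative that the relation at $(i,i)$ is $\zeta_{j,i}\mu_{i,j}$ alone, i.e. showing $\zeta_{j,i}\mu_{i,j}\neq 0$ in $\tL$. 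All the dimension counts are compatible with that alternative, and excluding it genuinely rests on carefully exploiting the perfect pairings of the self-injective $2$-translation structure at both $i$ and $j$ — the hypothesis ``one non-loop arrow out of $i$'' (hence one in, by Lemma~\ref{McKay_arrows}(3)) being exactly what keeps the relevant spaces one-dimensional and the bookkeeping manageable.
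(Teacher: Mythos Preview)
Your approach is essentially the paper's: apply Proposition~\ref{commrel} to the loop $\xc_i$ for part (1), then compute directly for (2). You are in fact more careful than the paper in justifying $c_i\neq 0$ (equivalently $\zeta_{j,i}\mu_{i,j}\neq 0$ in $\tL$) via the Frobenius pairing; the paper's one-line proof just asserts $c_i\in k^*$, implicitly leaning on the template already worked out in the proof of Proposition~\ref{relQsr2tr}, and your sketch---pair $\zeta_{j,i}$ against a generator of the one-dimensional $e_j\tL_2 e_i$ and use the relation $\mu_{i,j}\xc_i\sim\xc_j\mu_{i,j}$ from Proposition~\ref{rho_onearrow} to rewrite the resulting length-$3$ word as a multiple of $\zeta_{j,i}\mu_{i,j}$---does complete this.

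One slip in (2): you conclude $1+c_i^2\neq 0$ ``since $\mathrm{char}\,k=0$'', but $k$ is algebraically closed here, so $c_i=\sqrt{-1}$ is available and then your two vectors are proportional. This is really a small inaccuracy in the paper's own wording (``orthogonal basis for any $c_i\in k^*$''); what Lemma~\ref{quadraticDualU} actually uses is only that $c_i\xc_i^2+\zeta_{j,i}\mu_{i,j}$ spans the orthogonal complement of the line $k(\xc_i^2-c_i\zeta_{j,i}\mu_{i,j})$ in $k\tQ^{i,i}_2(\Xi)$, and that holds for every $c_i\in k^*$, including the isotropic case $c_i^2=-1$. So the slip is harmless, but your stated reason is wrong.
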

\begin{proof}
Apply Proposition \ref{commrel} for the arrow $\xc_i$, the image of  $\xc_i^2$ and $\zeta_{j,i}\mu_{i,j}$ are linearly dependent.
So there is a $c\in k^*$ such that  $\xc_i^2 -c_i\zeta_{j,i}\mu_{i,j}$.

The second assertion follows from  direct computation.
\end{proof}

\begin{lem}\label{loop_two_arrow}
Assume that there are exactly two arrows $\mu_{i,j_1}, \mu_{i,j_2}$ leave $i$.
\begin{enumerate}
\item There is a $ b_i\in k^*$, such that \eqqcnn{}{b_i \zeta_{j_1,i}\mu_{i,j_1 } + \zeta_{j_2,i}\mu_{i,j_2 } \in I(\Xi)  .}

\item If $\xc_i^2 \not \in I(\Xi)$,  then there is a $0\neq c_i \in k$ such that $\xc^2_i + c_i\zeta_{j_1,i}\mu_{i,j_1} \in I(\Xi)$.

\item We have $|\tQ^{i,i}_2(\Xi)|=3$ and for any  $b_i, c_i\in k^*$ \eqqc{orthbasii}{\arr{l}{\{b_i \zeta_{j_1,i}\mu_{i,j_1 } + \zeta_{j_2,i}\mu_{i,j_2 }, \zeta_{j_1,i}\mu_{i,j_1 } - b'_i\zeta_{j_2,i}\mu_{i,j_2 }, \xc^2_i\}\\\mbox{and }\\  \{b_i \zeta_{j_1,i}\mu_{i,j_1 } - \zeta_{j_2,i}\mu_{i,j_2 }, c_i\zeta_{j_1,i}\mu_{i,j_1 } -\xc_i^2, 
     \zeta_{j_1,i}\mu_{i,j_1 } + b_i\zeta_{j_2,i}\mu_{i,j_2 }+ c_i\xc_i^2\}}} are  orthogonal bases for $kQ^{i,i}_2(\Xi)$.
\end{enumerate}
\end{lem}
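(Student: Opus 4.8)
The plan is to reduce the statement to two ingredients: the description of the arrows around the vertex $i$ furnished by Lemma~\ref{McKay_arrows}, and the numerical and duality constraints that hold because $\tL(\Xi)$ is a stable $2$-translation algebra with trivial $2$-translation. First I would pin down $|\tQ^{i,i}_2(\Xi)|=3$: a path of length $2$ from $i$ to $i$ passes through a single intermediate vertex $h$ and consists of an arrow $i\to h$ followed by an arrow $h\to i$; by Lemma~\ref{McKay_arrows}(2) there is at most one arrow between any ordered pair, by (1) there is exactly one loop $\xc_i$, by (3) an arrow $i\to j$ forces an arrow $j\to i$, and by hypothesis the only non-loop arrows leaving $i$ are $\mu_{i,j_1},\mu_{i,j_2}$ with $j_1\neq j_2$; hence the closed paths of length $2$ at $i$ are precisely $\xc_i^2$, $p_1:=\zeta_{j_1,i}\mu_{i,j_1}$ and $p_2:=\zeta_{j_2,i}\mu_{i,j_2}$. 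Since $\tau i=i$, Lemma~2.1 of \cite{gw00} gives $\dmn_k e_i\tL_2e_i=\dmn_k e_i\tL_1e_i=1$, so the relations of $\tL(\Xi)$ at $i$ span a $2$-dimensional subspace $R\subseteq k\tQ^{i,i}_2(\Xi)$.

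The genuine content is the non-vanishing of $p_1$ and $p_2$ in $\tL(\Xi)$; once this is established parts (1) and (2) are immediate. Indeed, $e_i\tL_2e_i$ being one-dimensional forces $p_2$ to be a nonzero scalar multiple of $p_1$ in $\tL(\Xi)$, whence $b_ip_1+p_2\in I(\Xi)$ for a suitable $b_i\in k^*$; and if moreover $\xc_i^2\notin I(\Xi)$ then $\xc_i^2$ spans $e_i\tL_2e_i$ and $p_1\equiv\lambda\xc_i^2$ with $\lambda\neq 0$, giving $\xc_i^2+c_ip_1\in I(\Xi)$ with $c_i=-\lambda^{-1}\in k^*$. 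To prove $p_1\neq 0$ I would use the nondegenerate pairing $e_i\tL_1e_{j_1}\times e_{j_1}\tL_2e_i\to e_i\tL_3e_i\cong k$ that holds for the graded self-injective algebra $\tL(\Xi)$ (the same pairing used in the proof of Proposition~\ref{relQsr2tr}): both sides are one-dimensional by Lemma~2.1 of \cite{gw00}, and the commutation relations of Proposition~\ref{rho_onearrow}, together with Lemma~\ref{diffih} which kills any other length-$2$ path from $i$ to $j_1$, show that $e_{j_1}\tL_2e_i$ is spanned by the nonzero class of $\xc_{j_1}\mu_{i,j_1}$. Nondegeneracy then yields $\zeta_{j_1,i}\xc_{j_1}\mu_{i,j_1}\neq 0$ in $\tL_3$, and a second use of Proposition~\ref{rho_onearrow} rewrites $\zeta_{j_1,i}\xc_{j_1}$ as $\xc_i\zeta_{j_1,i}$ modulo $I(\Xi)$, so $\xc_i\,p_1\neq 0$ and hence $p_1\neq 0$; the argument for $p_2$ is symmetric. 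I expect this step to be the only real obstacle.

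For part (3) I would first record that the bilinear form on $k\tQ^{i,i}_2(\Xi)$ (from the identification of a path space with its dual via $\xa_1\cdots\xa_r(\xb_1,\dots,\xb_r)=\prod_s\xa_s(\xb_s)$) has the identity Gram matrix in the path basis $\{p_1,p_2,\xc_i^2\}$, since distinct paths are orthogonal and each path pairs to $1$ with itself. From parts (1) and (2) the relation space $R$ is the $2$-dimensional subspace spanned by $b_ip_1+p_2$ together with $\xc_i^2$ (when $\xc_i^2\in I(\Xi)$) or $\xc_i^2+c_ip_1$ (when $\xc_i^2\notin I(\Xi)$), and its orthogonal complement $R^{\perp}$ is one-dimensional. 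It then remains a routine linear-algebra verification that each of the two displayed triples is a basis of $k\tQ^{i,i}_2(\Xi)$ which splits as a basis of $R$ followed by a spanning vector of $R^{\perp}$, and which has the stated orthogonality; I would carry this out by writing each triple as a matrix over the basis $\{p_1,p_2,\xc_i^2\}$, checking the determinant is nonzero, and checking the pairing values, all of which is mechanical once $|\tQ^{i,i}_2(\Xi)|=3$ and $\dmn_k e_i\tL_2e_i=1$ are in hand.
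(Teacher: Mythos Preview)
Your proposal is correct and takes the same approach as the paper: the paper's proof simply cites Proposition~\ref{commrel} to obtain that $\zeta_{j_t,i}\mu_{i,j_t}$ are bound paths and hence linearly dependent in the one-dimensional space $e_i\tL_2e_i$, and then says the rest is direct computation; your nondegenerate-pairing argument is exactly the content behind that citation (compare the identical step in the proof of Proposition~\ref{relQsr2tr}). One minor remark: in part~(3) the parameters $b_i,c_i$ are declared arbitrary, so the displayed triples are purely linear-algebraic data (each a basis in which one vector is orthogonal to the other two) and need not be tied to the actual relation space of $\tL(\Xi)$.
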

\begin{proof}
By Proposition \ref{commrel}, we have that the images of  $ \zeta_{j_1,i}\mu_{i,j_1 }$ and of $ \zeta_{j_2,i}\mu_{i,j_2 }$ are bound paths and they are linearly dependent.
So there is a $b_i\in k^*$ such that $\zeta_{j_1,i}\mu_{i,j_1 } - b_i\zeta_{j_2,i}\mu_{i,j_2 }\in I(\Xi)$.

If $\xc_i^2 \not \in I(\Xi)$, then its image is linearly dependent on $\zeta_{j_1,i}\mu_{i,j_1 }$, so  there is a $c_i\in k^*$ such that $\zeta_{j_1,i}\mu_{i,j_1 } - b_i\xc_i^2\in I(\Xi)$.

The rest follow from   direct computations.
\end{proof}

\begin{lem}\label{loop_three_arrow}Assume that  there are three arrows $\mu_{i,j_1}, \mu_{i,j_2}, \mu_{i,j_3}$ leave $i$ with  $j_1< j_2<j_3$.

\begin{enumerate}
\item There are  $ b_i, b_i'\in k^*$, such that \eqqcn{}{b_i \zeta_{j_1,i}\mu_{i,j_1 } + \zeta_{j_2,i}\mu_{i,j_2 }, b'_i \zeta_{j_1,i}\mu_{i,j_1 } + \zeta_{j_3,i}\mu_{i,j_3 }\in I(\Xi)  .}

\item If $\xc_i^2 \not \in I(\Xi)$,  then there is a $c_i \in k^*$ such that $\xc^2_i - c_i\zeta_{j_1,i}\mu_{i,j_1} \in I(\Xi)$.

\item We have $|\tQ^{i,i}_2(\Xi)|=4$ and for any  $b_i,b_i',c_i\in k^*$, \eqqc{orthbasiii}{\arr{l}{
    \{b_i \zeta_{j_1,i}\mu_{i,j_1 } - \zeta_{j_2,i}\mu_{i,j_2 }, b'_i \zeta_{j_1,i}\mu_{i,j_1 } - \zeta_{j_3,i}\mu_{i,j_3}, \\ \quad \xc^2_i - c_i\zeta_{j_1,i}\mu_{i,j_1}, c_i\xc^2_i + \zeta_{j_1,i}\mu_{i,j_1} + b_i \zeta_{j_2,i}\mu_{i,j_2} +b'_i \zeta_{j_3,i}\mu_{i,j_3} \}\\ \mbox{and }\\  \{ \xc^2, b_i \zeta_{j_1,i}\mu_{i,j_1} - \zeta_{j_2,i}\mu_{i,j_2}, b'_i \zeta_{j_1,i}\mu_{i,j_1} - \zeta_{j_3,i}\mu_{i,j_3},\\ \quad \zeta_{j_1,i}\mu_{i,j_1} + b_i \zeta_{j_2,i}\mu_{i,j_2} +b'_i \zeta_{j_3,i}\mu_{i,j_3}, \xc^2 \}
    }
    } are orthogonal bases of $k\tQ_2^{i,i}(\Xi)$.

\end{enumerate}
\end{lem}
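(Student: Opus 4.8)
The argument runs exactly parallel to those of Lemmas \ref{loop_one_arrow} and \ref{loop_two_arrow}, the only new feature being the larger number of length-$2$ cycles through $i$. First I would pin down the combinatorics at $i$. A vertex of $\tQ(\Xi)$ from which three non-loop arrows issue occurs only as a branch vertex of $\tQ(D_l), \tQ(E_6), \tQ(E_7)$ or $\tQ(E_8)$, whose underlying graphs are trees, so $j_1,j_2,j_3$ are pairwise non-adjacent. By the hypothesis together with parts (1)--(3) of Lemma \ref{McKay_arrows}, the arrows incident with $i$ are exactly the loop $\xc_i$ and the arrows $\mu_{i,j_t}\colon i\to j_t$, $\zeta_{j_t,i}\colon j_t\to i$ for $t=1,2,3$. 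Hence any path of length $2$ from $i$ to $i$ has its middle vertex in $\{i,j_1,j_2,j_3\}$, and so is one of $\xc_i^2,\ \zeta_{j_1,i}\mu_{i,j_1},\ \zeta_{j_2,i}\mu_{i,j_2},\ \zeta_{j_3,i}\mu_{i,j_3}$; these four are distinct paths, whence $|\tQ^{i,i}_2(\Xi)|=4$ and $k\tQ^{i,i}_2(\Xi)$ is $4$-dimensional with this set as basis (taken to be its own dual).

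For the relations I would use that $\tL(\Xi)$ is a stable $2$-translation algebra with trivial $2$-translation: by Lemma 2.1 of \cite{gw00} one has $\dmn_k e_i\tL_2 e_i=\dmn_k e_i\tL_1 e_i=1$ (the single loop at $i$), $e_i\tL_3 e_i\cong k$, and the pairing $e_i\tL_1 e_j\times e_j\tL_2 e_i\to e_i\tL_3 e_i$ is non-degenerate, exactly as in the proof of Proposition \ref{relQsr2tr}. Applying non-degeneracy to the arrow $\zeta_{j_t,i}$, and Proposition \ref{commrel}(2) to the unique arrow $j_t\to i$ (which forces the length-$2$ paths $\zeta_{j_t,i}\xc_{j_t}$ and $\xc_i\zeta_{j_t,i}$ to be proportional in $\tL(\Xi)$, and likewise $\mu_{i,j_t}\xc_i$, $\xc_{j_t}\mu_{i,j_t}$), one deduces that the image of $\zeta_{j_t,i}\mu_{i,j_t}$ is a \emph{nonzero} element of the line $e_i\tL_2 e_i$ for each $t$. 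Since that line is one-dimensional, the three images are pairwise proportional with nonzero ratios; normalising the coefficients of $\zeta_{j_2,i}\mu_{i,j_2}$ and of $\zeta_{j_3,i}\mu_{i,j_3}$ to $1$ gives $b_i,b_i'\in k^*$ with $b_i\zeta_{j_1,i}\mu_{i,j_1}+\zeta_{j_2,i}\mu_{i,j_2}$ and $b_i'\zeta_{j_1,i}\mu_{i,j_1}+\zeta_{j_3,i}\mu_{i,j_3}$ in $I(\Xi)$, which is (1). For (2): if $\xc_i^2\notin I(\Xi)$ then its image spans the same line, hence is proportional to $\zeta_{j_1,i}\mu_{i,j_1}$ with ratio $c_i\in k^*$, giving $\xc_i^2-c_i\zeta_{j_1,i}\mu_{i,j_1}\in I(\Xi)$. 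As in the earlier lemmas, the normalisations of $b_i,b_i',c_i$ are realised by rescaling the arrows at $i$, consistently with the choices already fixed in Proposition \ref{rho_onearrow} and Lemma \ref{loop_two_arrow}.

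Finally, (3) is a direct linear-algebra computation inside the $4$-dimensional space $k\tQ^{i,i}_2(\Xi)$, identified with its dual via the path basis. By (1) and (2), the space of degree-$2$ relations among closed paths at $i$ is the span of the three linearly independent vectors $b_i\zeta_{j_1,i}\mu_{i,j_1}-\zeta_{j_2,i}\mu_{i,j_2}$, $b_i'\zeta_{j_1,i}\mu_{i,j_1}-\zeta_{j_3,i}\mu_{i,j_3}$ and $\xc_i^2-c_i\zeta_{j_1,i}\mu_{i,j_1}$ (respectively $\xc_i^2$, in the degenerate case $\xc_i^2\in I(\Xi)$); since $\dmn_k e_i\tL_2 e_i=1$ this span is exactly $3$-dimensional. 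One then checks by inspection that the remaining displayed vector, $c_i\xc_i^2+\zeta_{j_1,i}\mu_{i,j_1}+b_i\zeta_{j_2,i}\mu_{i,j_2}+b_i'\zeta_{j_3,i}\mu_{i,j_3}$ (respectively $\zeta_{j_1,i}\mu_{i,j_1}+b_i\zeta_{j_2,i}\mu_{i,j_2}+b_i'\zeta_{j_3,i}\mu_{i,j_3}$), is orthogonal to each of the three relations and so spans the one-dimensional orthogonal complement; adjoining it yields the asserted orthogonal bases of $k\tQ^{i,i}_2(\Xi)$.

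The only genuine obstacle I anticipate is the nonvanishing used in (1) and (2): one must really establish $\zeta_{j_t,i}\mu_{i,j_t}\neq 0$ (and, when relevant, $\xc_i^2\neq 0$) in $\tL(\Xi)$, not merely $\dmn_k e_i\tL_2 e_i\le 1$. This is where the self-injectivity of $\tL(\Xi)$ (the non-degenerate socle pairing) has to be combined carefully with Proposition \ref{commrel}, splitting according to which of $\mu_{i,j_t}\xc_i$, $\xc_{j_t}\mu_{i,j_t}$ carries the socle pairing with $\zeta_{j_t,i}$ — precisely the manoeuvre already performed in the proof of Proposition \ref{relQsr2tr}. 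Everything else, including the verification of orthogonality in (3) and the consistent choice of arrow representatives, is routine.
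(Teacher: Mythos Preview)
Your proposal is correct and follows the same route as the paper: the paper's own proof is literally one sentence --- ``The lemma follows from Proposition \ref{commrel}, similar to above two lemmas'' --- and you have simply expanded that sentence, invoking Proposition \ref{commrel}(2) at the loop $\xc_i$ for the linear dependence and the non-degenerate socle pairing (as in the proof of Proposition \ref{relQsr2tr}) for the nonvanishing of each $\zeta_{j_t,i}\mu_{i,j_t}$. Your explicit count $|\tQ^{i,i}_2(\Xi)|=4$ and the orthogonality check for (3) are exactly the ``direct computation'' the paper defers to; the only point worth noting is that ``orthogonal basis'' in the paper's sense means the first three vectors span the relation space and the fourth its orthogonal complement, not that the four are pairwise orthogonal --- which is precisely how you interpreted it.
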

\begin{proof}
The lemma follows from Proposition \ref{commrel}, similar to above two lemmas.
\end{proof}

Denote by $\tQ_{01}(\Xi)$ the set of vertices in $\tQ(\Xi)$ from which only one arrow leaves, $\tQ_{02}(\Xi)$ the set of vertices in $\tQ(\Xi)$ from which exact two arrows leave and $\tQ_{03}(\Xi)$ the set of vertices in $\tQ(\Xi)$ from which three arrows leave.
Set \eqqc{defC}{C_i =\left\{ \arr{ll}{\{ c_i\} \subset k^* & i\in \tQ_{01}(\Xi), \\ \{ c_i, b_i\} \subset k^*& i\in \tQ_{02}(\Xi), \\ \{ c_i, b_i,b'_i\} \subset k^*&i\in \tQ_{03}(\Xi), }\right.}
and set \eqqc{defCp}{C_i' =\left\{ \arr{ll}{\emptyset & i\in \tQ_{01}(\Xi), \\ \{  b_i\} \subset k^*& i\in \tQ_{02}(\Xi), \\ \{ b_i,b'_i\} \subset k^*& i\in \tQ_{03}(\Xi). }\right.}
Let 
 \eqqc{defU}{U_i(C_i) =\left\{ \arr{ll}{\{ c_i\xc^2_i +\zeta_{j,i}\mu_{i,j}\}  & i\in \tQ_{01}(\Xi), \\ \{ \zeta_{j_1,i}\mu_{i,j_1 } - b_i \zeta_{j_2,i}\mu_{i,j_2 }, \xc^2_i - c_i\zeta_{j_1,i}\mu_{i,j_1}\} & i\in \tQ_{02}(\Xi), \\ \{ b_i \zeta_{j_1,i}\mu_{i,j_1 } - \zeta_{j_2,i}\mu_{i,j_2 }, b'_i \zeta_{j_1,i}\mu_{i,j_1 } - \zeta_{j_3,i}\mu_{i,j_3 }, \\ \quad   \xc^2_i - c_i\zeta_{j_1,i}\mu_{i,j_1} \} & i\in \tQ_{03}(\Xi). }\right.}\normalsize
 and let \small\eqqc{defUm}{U^-_i(C'_i) =\left\{ \arr{ll}{\{ \zeta_{j,i}\mu_{i,j}  \}  & i\in \tQ_{01}(\Xi), \\ \{ \xc^2, \zeta_{j_1,i}\mu_{i,j_1 } - b_i\zeta_{j_2,i}\mu_{i,j_2 }    \} & i\in \tQ_{02}(\Xi), \\ \{ \xc_i^2,  b_i \zeta_{j_1,i}\mu_{i,j_1 } - \zeta_{j_2,i}\mu_{i,j_2 },  b'_i \zeta_{j_1,i}\mu_{i,j_1 } - \zeta_{j_3,i}\mu_{i,j_3 } \} & i\in \tQ_{03}(\Xi). }\right.}\normalsize

\begin{lem}\label{quadraticDualU}
A basis of the orthogonal subspace of $U_i(C_i)$ and of $U^-_i(C'_i)$ are respectively:
\eqqc{defUp}{U_i^{\perp}(C_i) =\left\{ \arr{ll}{\{ \zeta_{j,i}\mu_{i,j} + c_i\xc^2_i \}  & i\in \tQ_{01}(\Xi), \\ \{ \zeta_{j_1,i}\mu_{i,j_1 } + b_i^{-1}\zeta_{j_2,i}\mu_{i,j_2 }+ c^{-1}_i\xc_i^2 \} & i\in \tQ_{02}(\Xi), \\ \{  \zeta_{j_1,i}\mu_{i,j_1 } + b^{-1}_i\zeta_{j_2,i}\mu_{i,j_2 }+ {b'}^{-1}_i  \zeta_{j_3,i}\mu_{i,j_3 } +c_i \xc^2_i \} & i\in \tQ_{03}(\Xi). }\right.}\normalsize
and let \eqqc{defUmp}{U^{-,\perp}_i(C'_i) =\left\{ \arr{ll}{\{ \zeta_{j,i}\mu_{i,j}  \}  & i\in \tQ_{01}(\Xi), \\ \{ b_i \zeta_{j_1,i} \mu_{i,j_1 } +  \zeta_{j_2,i}\mu_{i,j_2 }   \} & i\in \tQ_{02}(\Xi), \\ \{ \zeta_{j_1,i}\mu_{i,j_1 } + b^{-1}_i \zeta_{j_2,i}\mu_{i,j_2 }+ {b'}^{-1}_i  \zeta_{j_3,i}\mu_{i,j_3 }  \} & i\in \tQ_{03}(\Xi). }\right.}
\end{lem}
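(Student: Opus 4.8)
The assertion is a linear-algebra statement about the degree-two component of $k\tQ(\Xi)$ supported at a single vertex, so the plan is to make the relevant space and bilinear pairing explicit and then verify both claims by a direct, case-by-case computation governed by the number of arrows leaving $i$.

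\emph{Step 1: the ambient space and its pairing.} Fix $i\in\tQ_0(\Xi)$ and let $V_i=k\tQ^{i,i}_2(\Xi)$ be the span of the length-two paths from $i$ to $i$. By Lemma~\ref{McKay_arrows} there is a unique loop $\xc_i$ at $i$, there is at most one arrow between any ordered pair of vertices, an arrow $\mu_{i,j}$ leaves $i$ exactly when an arrow $\zeta_{j,i}$ returns to $i$, and at most three arrows leave $i$. Hence the length-two paths from $i$ to $i$ are exactly $\xc_i^2$ together with the paths $\zeta_{j,i}\mu_{i,j}$, one for each arrow leaving $i$, with no coincidences among them; so $\dmn V_i=|\tQ^{i,i}_2(\Xi)|$ equals $2$, $3$ or $4$ according as $i\in\tQ_{01}(\Xi)$, $\tQ_{02}(\Xi)$ or $\tQ_{03}(\Xi)$, in agreement with Lemmas~\ref{loop_one_arrow}, \ref{loop_two_arrow} and \ref{loop_three_arrow}. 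Moreover, under the pairing of Section~\ref{sec:pre} identifying $k\tQ_2$ with its dual, distinct length-two paths are orthonormal, so $\{\xc_i^2\}\cup\{\zeta_{j,i}\mu_{i,j}\}$ is an orthonormal basis of $V_i$ and the ``orthogonal subspace of $W\subseteq V_i$'' is the honest orthogonal complement $W^{\perp}$, with $\dmn W+\dmn W^{\perp}=\dmn V_i$.

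\emph{Step 2: the verification.} Expanding the elements of $U_i(C_i)$ (resp.\ of $U^-_i(C'_i)$) in the orthonormal basis of Step~1, one sees at once that they are linearly independent, since each successive listed element involves a basis vector not occurring in the previous ones; thus $\dmn\,\mathrm{span}\,U_i(C_i)=|U_i(C_i)|$, and likewise for $U^-_i(C'_i)$. Since in each of the three cases $|U_i(C_i)|+|U_i^{\perp}(C_i)|=\dmn V_i$ and $|U^-_i(C'_i)|+|U^{-,\perp}_i(C'_i)|=\dmn V_i$, it remains only to check that every element listed in $U_i^{\perp}(C_i)$ pairs to zero with every element of $U_i(C_i)$, and similarly for the $\,{}^-\,$ versions. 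Each such check is a one-line computation using orthonormality of the basis and the identities $b_ib_i^{-1}={b'_i}{b'_i}^{-1}=c_ic_i^{-1}=1$. Being nonzero they are linearly independent, and by the dimension count they span the orthogonal complement, hence form a basis.

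\emph{Main obstacle.} There is no conceptual difficulty; the only work is the bookkeeping of running through the three vertex types and of tracking the coefficients (the relevant powers of $b_i,b'_i,c_i$ and the signs) when passing from a generator of $U_i(C_i)$ to the corresponding generator of $U_i^{\perp}(C_i)$. This is uniform in $\Xi\in\{A_l,D_l,E_6,E_7,E_8\}$ precisely because Lemma~\ref{McKay_arrows} reduces the local analysis at $i$ to the number of arrows leaving $i$; the coefficients themselves are forced, up to an overall scalar in each one-dimensional complement, by the requirement that the pairing vanish, which is how one fixes the conventions cleanly.
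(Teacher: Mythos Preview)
Your approach is essentially the same as the paper's: both reduce the claim to a direct orthogonality check in the orthonormal path basis of $k\tQ_2^{i,i}(\Xi)$, with the paper simply citing parts (2) and (3) of Lemmas~\ref{loop_one_arrow}, \ref{loop_two_arrow} and \ref{loop_three_arrow}, where those orthogonal bases were already exhibited. Your write-up is more explicit about the setup (the ambient space, the pairing, the dimension count), but the content is the same elementary computation.
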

\begin{proof}
This follow immediately from (2) of Lemma \ref{loop_one_arrow}, (3) of Lemma \ref{loop_two_arrow} and (3) of Lemma \ref{loop_three_arrow}.
\end{proof}

Fix $J\subseteq Q_0(\Xi)$, for a set \eqqc{defCJ}{C_J = C_a \cup \bigcup_{i\in Q_0\setminus J} C_{i}\cup \bigcup_{i\in J} C'_i,} for $C_i$ and $C_i'$ as defined in \eqref{defC} and \eqref{defCp}, set
\eqqc{deftrho}{\trho(\Xi,J,C_J) = \trho_p(\Xi) \cup \trho_a(\Xi,C_a) \cup \bigcup_{i\in \tQ_0\setminus J} U_i (C_i)\cup \bigcup_{i\in J} U^-_i (C'_i).}

By Lemma \ref{orthogonal_in_ij} and Lemma \ref{quadraticDualU}, we have the following.
\begin{pro}\label{qdrels}
\eqqc{deftrhop}{\trho^{\perp}(\Xi,J,C_J) ={\trho_a}^\perp(\Xi,C_a) \cup \bigcup_{i\in \tQ_0\setminus J} U^{\perp}_i (C_i) \cup \bigcup_{i\in J} U^{-,\perp}_i (C_i).}
\end{pro}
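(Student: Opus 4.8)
The plan is to compute $\trho^{\perp}(\Xi,J,C_J)$ one homogeneous block at a time. By the definition of the quadratic dual around \eqref{relationqd}, the orthogonal relation set decomposes as a disjoint union over pairs of vertices, $\trho^{\perp}(\Xi,J,C_J)=\bigcup_{i,j\in\tQ_0(\Xi)}\trho^{\perp}_{i,j}$, where $\trho^{\perp}_{i,j}$ is any spanning set of the orthogonal complement, inside $k\tQ_2^{i,j}(\Xi)$, of the subspace spanned by $e_j\,k\trho(\Xi,J,C_J)\,e_i$; moreover every element of $\trho(\Xi,J,C_J)$ in \eqref{deftrho} already lies in a single such block. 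So it suffices to run through the possible pairs $(i,j)$ and, in each block, exhibit a subset orthogonal to the relations whose union with the relations is a basis of $k\tQ_2^{i,j}(\Xi)$; that subset is then a legitimate choice of $\trho^{\perp}_{i,j}$. Almost all of this work is already packaged in Lemmas \ref{loop_one_arrow}, \ref{loop_two_arrow}, \ref{loop_three_arrow} and \ref{orthogonal_in_ij}, so what remains is essentially the bookkeeping of which blocks occur.

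First I would dispose of the blocks that contribute nothing. If $i\ne j$ are not joined by an arrow, then because the underlying graph of $\tQ(\Xi)$ is a cycle of length $l+1\ge 6$ (type $A$) or a tree (types $D$, $E$), there is at most one vertex $h$ with $i\to h\to j$; hence $k\tQ_2^{i,j}(\Xi)$ is at most one-dimensional and, when nonzero, is spanned by the single path $\mu_{h,j}\mu_{i,h}$, which lies in $\trho_p(\Xi)\subseteq\trho(\Xi,J,C_J)$ by construction (see \eqref{twoarrows} and \eqref{deftrho}, and Lemma \ref{diffih}). Thus the orthogonal complement is $0$ and $\trho^{\perp}_{i,j}=\emptyset$, consistently with the absence of such terms in \eqref{deftrhop}.

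Next I would treat the off-diagonal blocks with $i\ne j$ joined by an arrow. By Lemma \ref{McKay_arrows} there is exactly one arrow between $i$ and $j$ in each direction, and since the underlying graph has no triangles the only length-$2$ paths from $i$ to $j$ are $\mu_{i,j}\xc_i$ and $\xc_j\mu_{i,j}$, so $k\tQ_2^{i,j}(\Xi)$ is two-dimensional. The only relation of $\trho(\Xi,J,C_J)$ in this block is the corresponding element of $\trho_a(\Xi,C_a)$ (namely $\xa_{i,j}\xc_i-\xc_j\xa_{i,j}$ or $\xb_{j,i}\xc_j-\xc_i\xb_{j,i}$, according to orientation), and Lemma \ref{orthogonal_in_ij} says its orthogonal complement is spanned by the matching element of ${\trho_a}^{\perp}(\Xi,C_a)$. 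Ranging over all arrows produces exactly the summand ${\trho_a}^{\perp}(\Xi,C_a)$ of \eqref{deftrhop}.

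Finally I would handle the diagonal blocks $k\tQ_2^{i,i}(\Xi)$. By Lemma \ref{McKay_arrows} there is a loop at $i$ and between $1$ and $3$ arrows leave $i$, so the closed length-$2$ paths at $i$ are $\xc_i^2$ together with one path $\zeta_{j,i}\mu_{i,j}$ per neighbour $j$; the relations of $\trho(\Xi,J,C_J)$ supported in this block are $U_i(C_i)$ when $i\notin J$ and $U^-_i(C'_i)$ when $i\in J$. Splitting according to the number of arrows leaving $i$, (2) of Lemma \ref{loop_one_arrow}, (3) of Lemma \ref{loop_two_arrow} and (3) of Lemma \ref{loop_three_arrow} provide orthogonal bases of $k\tQ_2^{i,i}(\Xi)$ adapted to these relation sets, and Lemma \ref{quadraticDualU} then reads off the orthogonal complement as $U_i^{\perp}(C_i)$, respectively $U^{-,\perp}_i(C'_i)$. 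Assembling the three kinds of blocks yields precisely \eqref{deftrhop}. The only step needing genuine care rather than formal manipulation is checking that this case list over pairs $(i,j)$ is exhaustive, i.e. that every length-$2$ path of $\tQ(\Xi)$ falls into one of the patterns above; this rests on the structural facts recorded in Lemma \ref{McKay_arrows} (a loop at each vertex, at most one arrow between distinct vertices, at most three arrows leaving a vertex) together with the tree/long-cycle shape of the underlying graph, which is exactly why the small cases $A_2,A_3,A_4,D_4$ are excluded.
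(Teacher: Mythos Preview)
Your proposal is correct and follows essentially the same approach as the paper. The paper's own proof is a single line invoking Lemma~\ref{orthogonal_in_ij} and Lemma~\ref{quadraticDualU}; you carry out the same block-by-block computation but make explicit the bookkeeping the paper leaves implicit, in particular that the $\trho_p(\Xi)$ blocks (non-adjacent $i\ne j$) span their entire $k\tQ_2^{i,j}(\Xi)$ and hence contribute nothing to the dual, and that the case split over pairs $(i,j)$ is exhaustive.
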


\begin{pro}\label{relQed}
Let $\Xi$ be $A_l$, $D_l$, with $l\ge 5$, and $E_6, E_7, E_8$.
If $\tL'(\Xi)=\tQ(\Xi)/I(\Xi)$ is  $2$-translation algebra then there is a subset $J$ of $\tQ_0$ and a set $C_J$ as in \eqref{defC} of nonzero elements in $k$, such that by choosing the representatives of the arrows properly, we have $I(\Xi)$ is generated by $\trho(\Xi,J,C_J)$.
\end{pro}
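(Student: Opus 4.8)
The plan is to follow the two-step pattern of Proposition~\ref{relQsr2tr}: first exhibit a subset $J\subseteq\tQ_0(\Xi)$ and a set $C_J$ of nonzero scalars with $\trho(\Xi,J,C_J)\subseteq I(\Xi)$ (after choosing the arrow representatives suitably), and then show by a degreewise dimension count that this containment upgrades to an equality of ideals. Throughout I would use that a $2$-translation algebra on the $2$-translation quiver $\tQ(\Xi)$ is in particular quadratic, so $I(\Xi)$ is generated by $I(\Xi)\cap k\tQ_2(\Xi)$; that by Proposition~\ref{commrel} the dimension of the space of length-$2$ bound paths from $j$ to $i$ in $\tL'(\Xi)$ equals the number of arrows from $i$ to $j$, while $\tL'(\Xi)_3 e_i\simeq\tL'(\Xi)_0 e_i$; and that by Lemma~\ref{McKay_arrows} each vertex $i$ carries a unique loop $\xc_i$ and lies in exactly one of $\tQ_{01}(\Xi),\tQ_{02}(\Xi),\tQ_{03}(\Xi)$ according to its out-valency (equivalently in-valency), whose local behaviour is governed by Lemma~\ref{loop_one_arrow}, Lemma~\ref{loop_two_arrow} or Lemma~\ref{loop_three_arrow} respectively.

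The containment $\trho(\Xi,J,C_J)\subseteq I(\Xi)$ is assembled piece by piece. The zero relations $\trho_p(\Xi)$ lie in $I(\Xi)$ by Proposition~\ref{rho_diffih} (via Proposition~\ref{zerorel}). The commutator relations $\trho_a(\Xi,C_a)$ lie in $I(\Xi)$ after a suitable normalisation of the arrows, by Proposition~\ref{rho_onearrow}; the chain of rescalings there is consistent precisely because the diagram $\Xi$ with $l\ge 5$ (and $\Xi\ne A_2,A_3,A_4,D_4$) is a tree, so no normalisation constraint has to close around a cycle and every chosen arrow-coefficient can be taken to be $1$. For the loop relations, Proposition~\ref{commrel}(2) tells us the length-$2$ loop-paths at each vertex $i$ are pairwise linearly dependent in $\tL'(\Xi)$; feeding this into Lemma~\ref{loop_one_arrow}, Lemma~\ref{loop_two_arrow} or Lemma~\ref{loop_three_arrow} produces nonzero scalars $b_i,b_i',c_i$ and shows that $I(\Xi)$ realizes either the $U_i$-type or the $U_i^-$-type orthogonal basis at $i$. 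Declaring $J$ to be the set of vertices of the second kind (those at which $\xc_i^2$, or the relevant distinguished loop-path, is forced into $I(\Xi)$) and taking $C_J$ as in \eqref{defCJ} gives $\trho_p(\Xi)\cup\trho_a(\Xi,C_a)\cup\bigcup_{i\notin J}U_i(C_i)\cup\bigcup_{i\in J}U_i^-(C_i')\subseteq I(\Xi)$, i.e. $\trho(\Xi,J,C_J)\subseteq I(\Xi)$.

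For the reverse inclusion I would set $\tL=k\tQ(\Xi)/(\trho(\Xi,J,C_J))$, so that the identity on $k\tQ(\Xi)$ induces a graded surjection $\tL\twoheadrightarrow\tL'(\Xi)$ which is the identity in degrees $0$ and $1$; it then suffices to prove it bijective in every degree. Computing with the explicit relations vertex by vertex: the zero relations of $\trho_p(\Xi)$ annihilate every length-$2$ path between distinct vertices except those of the form $\mu_{i,j}\xc_i$ and $\xc_j\mu_{i,j}$ with $\mu_{i,j}$ an arrow from $i$ to $j$, and $\trho_a$ identifies this pair up to a scalar; the relations in $U_i(C_i)$ or $U_i^-(C_i')$ cut $e_i\tL_2 e_i$ down to dimension $1$ and, together with $\trho_p(\Xi)$, force $e_j\tL_3 e_i=0$ for $i\ne j$, $\dmn_k e_i\tL_3 e_i\le 1$, and all paths of length $\ge 4$ to vanish. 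Hence $\dmn_k e_j\tL_t e_i\le\dmn_k e_j\tL'(\Xi)_t e_i$ for all $i,j,t$, the right-hand dimensions being exactly those read off from the Loewy matrix \eqref{LoewyL}, which depends only on the adjacency matrix of $\tQ(\Xi)$ and is therefore the same for $\tL'(\Xi)$ and for the skew-group model $\tL(\Xi)$. Since the surjection $\tL\twoheadrightarrow\tL'(\Xi)$ supplies the opposite inequalities, all of these are equalities, the surjection is an isomorphism, and $I(\Xi)=(\trho(\Xi,J,C_J))$.

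The main obstacle I expect is the local analysis at the branch vertices — the valency-$3$ vertices of $D_l,E_6,E_7,E_8$ and, more generally, any vertex from which two or three arrows leave — where one must verify that the orthogonal bases of Lemma~\ref{loop_two_arrow} and Lemma~\ref{loop_three_arrow}, together with the zero relations of $\trho_p(\Xi)$, genuinely leave $e_i\tL_2 e_i$ one-dimensional without accidentally killing a loop-path that ought to survive, and where one must keep the normalisation of arrow representatives used for $\trho_a$ compatible with the free loop-parameters $b_i,b_i',c_i$ retained in $C_J$. Once this case-by-case bookkeeping is settled, the concluding dimension count is the verbatim analogue of the one used for Proposition~\ref{relQsr2tr}.
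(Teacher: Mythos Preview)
Your proposal is correct and follows essentially the same two-step strategy as the paper: establish $\trho(\Xi,J,C_J)\subseteq I(\Xi)$ via Propositions~\ref{rho_diffih} and~\ref{rho_onearrow} together with Lemmas~\ref{loop_one_arrow}--\ref{loop_three_arrow} (with $J=\{i:\xc_i^2\in I(\Xi)\}$), and then upgrade to equality by a degreewise dimension comparison against the Loewy matrix~\eqref{LoewyL}. One small inaccuracy: your parenthetical claim that ``the diagram $\Xi$ \ldots\ is a tree'' is false for $\Xi=A_l$, whose McKay quiver in $\mathrm{SL}(2,\mathbb C)$ is the extended Dynkin diagram $\tilde A_l$, a cycle; this does not break your argument since you cite Proposition~\ref{rho_onearrow} for the normalisation (and that proposition rescales only the loops $\xc_j$, leaving the $a_{i,j}$ as free parameters in $C_a$, so no cycle obstruction arises), but the stated justification should be dropped or corrected.
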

\begin{proof}
Take  $J=\{i\in \tQ_0(\Xi)| \xc^2_i\in I(\Xi)\}$.
Then by choosing the representatives of the arrows properly, we have that there is a set $C_J\subset k^*$ as in \eqref{defC}, such that $\trho(\Xi,J,C_J)\subset I(\Xi)$, by Propositions \ref{rho_diffih}, \ref{rho_onearrow} and Lemmas \ref{loop_one_arrow}, \ref{loop_two_arrow} and \ref{loop_three_arrow}.

Let $\tL = k\tQ(\Xi)/(\trho(\Xi,J,C_J))$.
By Lemma \ref{diffih}, we have $\dmn_k e_j \tL_1 e_i = \dmn_k e_i \tL_2 e_j =0$ if there is no arrow from $i$ to $j$, by Lemma \ref{orthogonal_in_ij}, we have $\dmn_k e_j \tL_1 e_i = \dmn_k e_i \tL_2 e_j =1$ if there is an arrow from $i$ to $j$ and by (2) of Lemma \ref{loop_one_arrow}, (3) of Lemma \ref{loop_two_arrow} and (3) of Lemma \ref{loop_three_arrow}, and $\dmn_k e_i \tL_1 e_i = \dmn_k e_i \tL_2 e_i =1$ for all $i$.
Since $k\tQ(\Xi)/I(\Xi)$ is $2$-translation and the Loewy matrix of $k\tQ(\Xi)/I(\Xi)$ is \eqref{LoewyL} with $\tQ = \tQ(\Xi)$ and $\trho(\Xi,J,C_J)\subset I(\Xi)$, this implies that $\trho(\Xi,J,C_J)$ is a generating set of $I(\Xi)$.
\end{proof}

It is also immediate that $\trho^{\perp}(\Xi,J,C_J)$ spans the orthogonal spaces of $\trho(\Xi,J,C_J)$ in $k\tQ_2$.
So we have the following characterization of relations for $\tQ(\Xi)$.

By Lemma \ref{qdrels}, a quadratic dual relation of $\trho(\Xi,J,C)$ is $\trho^{\perp}(\Xi,J,C)$ defined in \eqref{deftrhop}.
So we get the following, similar to the proof of Proposition \ref{relQsrAS}.
\begin{pro}\label{relQedp}
Let $\Xi$ be $A_l$, $D_l$, with $l\ge 4$, and $E_6, E_7, E_8$.
If $\tG = \tQ(\Xi)/I$ is Koszul AS-regular algebra of finite Gelfand-Kirilov dimension then there is a subset $J$ of $\tQ_0$ and a set $C$ as in \eqref{defC} of nonzero elements in $k$, such that  $\tL(\Xi) \simeq k \tQ(\Xi)/(\trho^{\perp}(\Xi,J,C))$.
\end{pro}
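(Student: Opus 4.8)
The plan is to deduce this from Koszul duality together with Proposition~\ref{relQed}, in exactly the way Proposition~\ref{relQsrAS} was deduced from Proposition~\ref{relQsr2tr} in the abelian case. First I would record the duality input: if $\tG = k\tQ(\Xi)/I$ is a Koszul AS-regular algebra of finite Gelfand--Kirilov dimension, then its quadratic dual $\tG^{!,op}$ is a Koszul self-injective algebra whose Gabriel quiver is again $\tQ(\Xi)$, hence, by Proposition~\ref{mckpair} and the characterization of such quadratic duals (Theorem~5.1 of \cite{m99}, the same tool invoked in Proposition~\ref{relQsrAS}), $\tG^{!,op}$ is a Koszul stable $2$-translation algebra. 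Write $\tL(\Xi) = \tG^{!,op}$; the claim about $\tG$ will be equivalent to the claim about $\tL(\Xi)$ via $\trho^{\perp\perp}=\trho$.

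Next I would apply Proposition~\ref{relQed} to $\tL(\Xi)$: there is a subset $J\subseteq \tQ_0(\Xi)$, namely $J=\{i\mid \xc_i^2\in I(\Xi)\}$ as in its proof, and a set $C_J$ of nonzero scalars as in \eqref{defCJ}, so that after a suitable choice of representatives of the arrows one has $\tL(\Xi)\simeq k\tQ(\Xi)/(\trho(\Xi,J,C_J))$ with $\trho(\Xi,J,C_J)$ as in \eqref{deftrho}. It then remains to identify the quadratic dual relation set, i.e.\ to show $\trho^{\perp}(\Xi,J,C_J)$ of \eqref{deftrhop} spans the orthogonal complement of the span of $\trho(\Xi,J,C_J)$ in $k\tQ_2(\Xi)$. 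I would do this one ordered pair of vertices at a time, examining $e_j k\tQ_2(\Xi) e_i$: on the pairs contributing to $\trho_p(\Xi)$ (the ``no return arrow'' pairs of Lemma~\ref{diffih}) the whole space $e_j k\tQ_2(\Xi)e_i$ is spanned by the single path lying in $\trho_p(\Xi)$, so its orthogonal complement is zero and contributes nothing to $\trho^{\perp}$; on the pairs with $i\ne j$ carrying a return arrow, Lemma~\ref{orthogonal_in_ij} gives the orthogonal basis recorded in ${\trho_a}^\perp(\Xi,C_a)$; and on the loop pairs $i=i$, Lemma~\ref{quadraticDualU} (assembled from Lemmas~\ref{loop_one_arrow}, \ref{loop_two_arrow}, \ref{loop_three_arrow} according to whether one, two or three arrows leave $i$) gives the orthogonal bases $U_i^{\perp}(C_i)$ and $U_i^{-,\perp}(C_i')$ according to whether $i\notin J$ or $i\in J$. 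Summing the identity $\dmn_k e_j k\tQ_2(\Xi)e_i = |\trho_{i,j}(\Xi,J,C_J)| + |\trho^{\perp}_{i,j}(\Xi,J,C_J)|$ over all $i,j$ shows that $\trho(\Xi,J,C_J)$ and $\trho^{\perp}(\Xi,J,C_J)$ span mutually orthogonal complementary subspaces, so $\tG = (\tL(\Xi))^{!,op}\simeq k\tQ(\Xi)/(\trho^{\perp}(\Xi,J,C_J))$, and equivalently $\tL(\Xi)\simeq k\tQ(\Xi)/(\trho(\Xi,J,C_J))$, which is the assertion.

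The genuine work, and the main obstacle, is the per-vertex-pair verification just described: one must simultaneously track the three local shapes of a vertex, keep the normalization of arrow representatives used in Proposition~\ref{rho_onearrow} and Proposition~\ref{relQed} coherent with the one under which $\trho^{\perp}(\Xi,J,C_J)$ is written, and confirm that nothing is ``lost'' on the $\trho_p$-pairs. A secondary point to pin down is the small-rank cases: Proposition~\ref{relQed} is stated for $A_l,D_l$ with $l\ge 5$, whereas the present statement allows $l\ge 4$, so for $\Xi=A_4$ (and $D_4$, if that is intended) I would either check directly that the same relation set works or adjust the range of $l$; apart from that, everything reduces to the dimension count above and to Theorem~5.1 of \cite{m99}, just as in Proposition~\ref{relQsrAS}.
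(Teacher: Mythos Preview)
Your proposal is correct and follows the same route as the paper. The paper's own argument is in fact just the single sentence preceding the proposition (``By Lemma~\ref{qdrels}, a quadratic dual relation of $\trho(\Xi,J,C)$ is $\trho^{\perp}(\Xi,J,C)$ \ldots\ similar to the proof of Proposition~\ref{relQsrAS}''): it invokes Proposition~\ref{relQed}, the orthogonality computation recorded in Proposition~\ref{qdrels}, and Theorem~5.1 of \cite{m99}, which is exactly what you spell out in detail. Your observation about the discrepancy between $l\ge 4$ here and $l\ge 5$ in Proposition~\ref{relQed} is apt; the paper does not address it.
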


\medskip

Now construct the nicely-graded quiver $\zZ_{\vv}\tQ(\Xi)$ over $\tQ(\Xi)$,  it has only one connected component since $\tQ(\Xi)$ has loops.
With the relation set \eqqcn{}{\rho_{\zZ_{\vv}\tQ(\Xi)} = \rho_{\zZ_{\vv}\tQ(\Xi)}(J,C_J)=\{z[m]| z\in \trho(\Xi,J,C_J),t\in \zZ\}} for some parameter set $C_J = C(\Xi,J)$, $\zZ_{\vv}\tQ(\Xi)$ forms a $2$-translation quiver with $2$-translation algebra $$\LL(\Xi,J,C) \simeq k\zZ_{\vv}\tQ(\Xi)/(\rho_{\zZ_{\vv}\tQ(\Xi)}(J,C_J)) ,$$ if $\tL(\Xi)$ is an $2$-translation algebra.

By taking the complete $\tau$-slices $Q(\Xi)=\zZ_{\vv}\tQ(\Xi)[0,2]$, we obtain the following quivers:
\tiny$$\xymatrix@C=0.6cm@R0.4cm{
&&&\stackrel{(0,0)}{\circ}\ar@[purple][d]\ar@/_/@[green][ddddll]\ar@/^/[ddddrr]&&&\\
&&&\stackrel{(0,1)}{\circ}\ar@[purple][d]\ar@/_/@[green][ddddll]\ar@/^/[ddddrr]&&&\\
Q(A_l) &&&\stackrel{(0,2)}{\circ} &&&\\
& \stackrel{(1,0)}{\circ}\ar@[purple][d] \ar@/_/@[green][dr]\ar@/^/[uurr] & \stackrel{(2,0)}{\circ}\ar@[purple][d]\ar@/^/[dl]\ar@/_/@{--}[rr] & &\stackrel{(l-1,0)}{\circ}\ar@[purple][d] \ar@/_/@[green][dr] &\stackrel{(l,0)}{\circ}\ar@[purple][d] \ar@/^/[dl]\ar@/_/@[green][uull]\\
& \stackrel{(1,1)}{\circ}\ar@[purple][d] \ar@/_/@[green][dr]\ar@/^/[uurr] & \stackrel{(2,1)}{\circ}\ar@[purple][d]\ar@/^/[dl]\ar@/_/@{--}[rr] & &\stackrel{(l-1,1)}{\circ}\ar@[purple][d] \ar@/_/@[green][dr] &\stackrel{(l,1)}{\circ}\ar@[purple][d] \ar@/^/[dl]\ar@/_/@[green][uull]\\
& \stackrel{(1,2)}{\circ}  & \stackrel{(2,2)}{\circ}\ar@/_/@{--}[rr] & &\stackrel{(l-1,2)}{\circ}  &\stackrel{(l,2)}{\circ} \\
&&&&&&&\\
}$$\normalsize
\tiny$$\xymatrix@C=0.6cm@R0.4cm{
&\stackrel{(1,0)}{\circ}\ar@[purple][d]\ar@[green][dddr]&&&&\stackrel{(l-1,0)}{\circ}\ar@[purple][d] \ar[dddl]&&\\
&\stackrel{(1,1)}{\circ}\ar@[purple][d]\ar@[green][dddr]&&&&\stackrel{(l-1,1)}{\circ}\ar@[purple][d] \ar[dddl]&&\\
&\stackrel{(1,2)}{\circ} &\stackrel{(2,0)}{\circ}\ar@[purple][d]\ar[ul]\ar[dddl] \ar@[green][dr] &\stackrel{(3,0)}{\circ}\ar@[purple][d]\ar[dl] \ar@{--}[r] &\stackrel{(l-2,0)}{\circ}\ar@[purple][d]\ar@[green][ur] \ar@[green][dddr] &\stackrel{(l-1,2)}{\circ} \\
Q(D_l) & &\stackrel{(2,1)}{\circ}\ar@[purple][d]\ar[ul]\ar[dddl] \ar@[green][dr] &\stackrel{(3,1)}{\circ}\ar@[purple][d]\ar[dl] \ar@{--}[r] &\stackrel{(l-2,1)}{\circ}\ar@[purple][d]\ar@[green][ur] \ar@[green][dddr] &\\
&\stackrel{(0,0)}{\circ}\ar@[purple][d]\ar@[green][ur] &\stackrel{(2,2)}{\circ} & \stackrel{(3,2)}{\circ} \ar@{--}[r] &\stackrel{(l-2,2)}{\circ} &\stackrel{(l,0)}{\circ}\ar@[purple][d]\ar[ul] \\
&  \stackrel{(0,1)}{\circ}\ar@[purple][d]\ar@[green][ur]& & & &\stackrel{(l,1)}{\circ}\ar@[purple][d] \ar[ul]&&\\
&  \stackrel{(0,2)}{\circ} & & & &\stackrel{(l,2)}{\circ}&&\\
&&&&&&&\\
}$$\normalsize
\tiny$$\xymatrix@C=0.6cm@R0.4cm{
&&&\stackrel{(7,0)}{\circ}\ar@[purple][d]\ar[ddl]&&&\\ &&\stackrel{(6,0)}{\circ}\ar@[purple][d]\ar@[green][r]\ar[ddddr]& \stackrel{(7,1)}{\circ}\ar@[purple][d]\ar[ddl]&&&\\
&&\stackrel{(6,1)}{\circ}\ar@[purple][d]\ar[ddddr]\ar@[green][r]& \stackrel{(7,2)}{\circ}&&&\\
Q(E_6) &&\stackrel{(6,2)}{\circ}&&&&\\
& \stackrel{(1,0)}{\circ}\ar@[purple][d] \ar@[green][dr] & \stackrel{(2,0)}{\circ}\ar@[purple][d]\ar[dl]\ar@[green][dr] & \stackrel{(3,0))}{\circ}\ar@[purple][d]\ar@[green][uul] \ar@[green][dr] \ar[dl] &\stackrel{(4,0)}{\circ}\ar@[purple][d] \ar@[green][dr] \ar[dl] &\stackrel{(5,0)}{\circ}\ar@[purple][d] \ar[dl]\\
& \stackrel{(1,1)}{\circ}\ar@[purple][d] \ar@[green][dr] & \stackrel{(2,1)}{\circ}\ar@[purple][d]\ar[dl]\ar@[green][dr] & \stackrel{(3,1))}{\circ}\ar@[purple][d]\ar@[green][uul] \ar@[green][dr] \ar[dl] &\stackrel{(4,1)}{\circ}\ar@[purple][d] \ar@[green][dr] \ar[dl] &\stackrel{(5,1)}{\circ}\ar@[purple][d] \ar[dl]\\
& \stackrel{(1,2)}{\circ}& \stackrel{(2,2)}{\circ} & \stackrel{(3,2))}{\circ} &\stackrel{(4,2)}{\circ} &\stackrel{(5,2)}{\circ}\\
&&&&&&&\\}$$ \normalsize
\tiny$$ \xymatrix@C=0.6cm@R0.4cm{
&&&\stackrel{(7,0)}{\circ}\ar@[purple][d]\ar[ddddr]&&&&\\
&&&\stackrel{(7,1)}{\circ}\ar@[purple][d]\ar[ddddr]&&&&\\
Q(E_7) &&&\stackrel{(7,2)}{\circ}&&&&\\
& \stackrel{(0,0)}{\circ}\ar@[purple][d] \ar@[green][dr] & \stackrel{(1,0)}{\circ}\ar@[purple][d]\ar[dl]\ar@[green][dr] & \stackrel{(2,0)}{\circ}\ar@[purple][d] \ar@[green][dr] \ar[dl] &\stackrel{(3,0)}{\circ}\ar@[purple][d]\ar@[green][uul] \ar@[green][dr] \ar[dl]&\stackrel{(4,0)}{\circ}\ar@[purple][d] \ar@[green][dr] \ar[dl] &\stackrel{(5,0)}{\circ}\ar@[purple][d] \ar@[green][dr] \ar[dl]&\stackrel{(6,0)}{\circ}\ar@[purple][d] \ar[dl]\\
& \stackrel{(0,1)}{\circ}\ar@[purple][d] \ar@[green][dr] & \stackrel{(1,1)}{\circ}\ar@[purple][d]\ar[dl]\ar@[green][dr] & \stackrel{(2,1)}{\circ}\ar@[purple][d] \ar@[green][dr] \ar[dl] &\stackrel{(3,1)}{\circ}\ar@[purple][d]\ar@[green][uul] \ar@[green][dr] \ar[dl]&\stackrel{(4,1)}{\circ}\ar@[purple][d] \ar@[green][dr] \ar[dl] &\stackrel{(5,1)}{\circ}\ar@[purple][d] \ar@[green][dr] \ar[dl]&\stackrel{(6,1)}{\circ}\ar@[purple][d] \ar[dl]\\
& \stackrel{(0,2)}{\circ}  & \stackrel{(1,2)}{\circ}  & \stackrel{(2,2)}{\circ}  &\stackrel{(3,2)}{\circ} &\stackrel{(4,2)}{\circ}  &\stackrel{(5,2)}{\circ} &\stackrel{(6,2)}{\circ} \\
&&&&&&&\\
}$$\normalsize
 \tiny$$ \xymatrix@C=0.6cm@R0.4cm{
&&\stackrel{(8,0)}{\circ}\ar@[purple][d]\ar[ddddr]&&&&\\
&&\stackrel{(8,1)}{\circ}\ar@[purple][d]\ar[ddddr]&&&&\\
Q(E_8) &&\stackrel{(8,2)}{\circ}&&&&\\
& \stackrel{(1,0)}{\circ}\ar@[purple][d] \ar@[green][dr] & \stackrel{(2,0)}{\circ}\ar@[purple][d]\ar[dl]\ar@[green][dr] & \stackrel{(3,0)}{\circ}\ar@[purple][d]\ar@[green][uul] \ar@[green][dr] \ar[dl] &\stackrel{(4,0)}{\circ}\ar@[purple][d] \ar@[green][dr] \ar[dl] &\stackrel{(5,0)}{\circ}\ar@[purple][d] \ar@[green][dr] \ar[dl]&\stackrel{(6,0)}{\circ}\ar@[purple][d] \ar@[green][dr] \ar[dl] &\stackrel{(7,0)}{\circ}\ar@[purple][d] \ar@[green][dr] \ar[dl]&\stackrel{(9,0)}{\circ}\ar@[purple][d] \ar[dl]\\
& \stackrel{(1,1)}{\circ}\ar@[purple][d] \ar@[green][dr] & \stackrel{(2,1)}{\circ}\ar@[purple][d]\ar[dl]\ar@[green][dr] & \stackrel{(3,1)}{\circ}\ar@[purple][d]\ar@[green][uul] \ar@[green][dr] \ar[dl] &\stackrel{(4,1)}{\circ}\ar@[purple][d] \ar@[green][dr] \ar[dl] &\stackrel{(5,1)}{\circ}\ar@[purple][d] \ar@[green][dr] \ar[dl]&\stackrel{(6,1)}{\circ}\ar@[purple][d] \ar@[green][dr] \ar[dl] &\stackrel{(7,1)}{\circ}\ar@[purple][d] \ar@[green][dr] \ar[dl]&\stackrel{(9,1)}{\circ}\ar@[purple][d] \ar[dl]\\
& \stackrel{(1,2)}{\circ}  & \stackrel{(2,2)}{\circ}  & \stackrel{(3,2)}{\circ}  &\stackrel{(4,2)}{\circ}  &\stackrel{(5,2)}{\circ}   &\stackrel{(6,2)}{\circ} &\stackrel{(7,2)}{\circ}  &\stackrel{(9,2)}{\circ}
}
$$\normalsize

They are all nicely-graded quivers.
We get $n$-properly-graded quivers by taking the relation \eqqc{relnpg}{\rho(\Xi,J,C) = \{z[0]| z\in \trho(\Xi, J, C)\}.}
For any parameter set $C_J$, we can replace the representatives for the arrow $\xa_{i,j,1},\xb_{j,i,1}$ and $\xc_{i,1}$ such that all the parameters become $1$.

Write \eqqcn{defmuzetat}{\mu_{i,j,t}= \left\{\arr{ll}{\xa_{i,j,t} & i<j, t=0,1,\\ \xb_{i,j,t} & i>j, t=0,1, }\right. \mbox{ and } \zeta_{j,i}= \left\{\arr{ll}{\xb_{j,i,t} & i<j,t=0,1,\\ \xa_{j,i,t} & i>j, t=0,1. }\right.}

Let 
 \eqqc{defUS}{U_i(C_i) =\left\{ \arr{ll}{\{ \xc_{i,1} \xc_{i,0} +\zeta_{j,i,1}\mu_{i,j,0}\}  & i\in \tQ_{01}(\Xi), \\ \{ \zeta_{j_1,i,1}\mu_{i,j_1,0 } - \zeta_{j_2,i,1}\mu_{i,j_2,0 }, \xc_{i,1} \xc_{i,0}  - \zeta_{j_1,i,1}\mu_{i,j_1,0}\} & i\in \tQ_{02}(\Xi), \\ \{ \zeta_{j_1,i,1}\mu_{i,j_1,0 } - \zeta_{j_2,i,1}\mu_{i,j_2,0 },  \zeta_{j_1,i,1}\mu_{i,j_1,0 } - \zeta_{j_3,i,1}\mu_{i,j_3,0 }, \\ \quad   \xc_{i,1} \xc_{i,0}  - \zeta_{j_1,i,1}\mu_{i,j_1,0} \} & i\in \tQ_{03}(\Xi). }\right.}\normalsize
 and let \eqqc{defUm}{U^-_i(C'_i) =\left\{ \arr{ll}{\{ \zeta_{j,i,1}\mu_{i,j,0}  \}  & i\in \tQ_{01}(\Xi), \\ \{ \xc^2, \zeta_{j_1,i,1}\mu_{i,j_1,0 } - \zeta_{j_2,i,1}\mu_{i,j_2,0 }    \} & i\in \tQ_{02}(\Xi), \\ \{   \zeta_{j_1,i,1}\mu_{i,j_1,0 } +  \zeta_{j_2,i,1}\mu_{i,j_2,0 }+   \zeta_{j_3,i,1}\mu_{i,j_3,0 }  \} & i\in \tQ_{03}(\Xi). }\right.}

\eqqc{defUpS}{U_i^{\perp}(C_i) =\left\{ \arr{ll}{\{ \zeta_{j,i,1}\mu_{i,j,0} + \xc_{i,1} \xc_{i,0} \}  & i\in \tQ_{01}(\Xi), \\ \{ \zeta_{j_1,i,1}\mu_{i,j_1,0 } + \zeta_{j_2,i,1}\mu_{i,j_2,0 }+  \xc_{i,1} \xc_{i,0} \} & i\in \tQ_{02}(\Xi), \\ \{  \zeta_{j_1,i,1}\mu_{i,j_1,0 } + b^{-1}_i\zeta_{j_2,i,1}\mu_{i,j_2,0 }+ \zeta_{j_3,i,1}\mu_{i,j_3,0 } + \xc_{i,1} \xc_{i,0} \} & i\in \tQ_{03}(\Xi). }\right.}\normalsize
and let \eqqc{defUmp}{U^{-,\perp}_i(C'_i) =\left\{ \arr{ll}{\{ \zeta_{j,i,1}\mu_{i,j,0}  \}  & i\in \tQ_{01}(\Xi), \\ \{  \zeta_{j_1,i,1} \mu_{i,j_1,0 } +  \zeta_{j_2,i,1}\mu_{i,j_2,0 }   \} & i\in \tQ_{02}(\Xi), \\ \{ \zeta_{j_1,i,1}\mu_{i,j_1,0 } + \zeta_{j_2,i,1}\mu_{i,j_2,0 }+   \zeta_{j_3,i,1}\mu_{i,j_3,0 }  \} & i\in \tQ_{03}(\Xi). }\right.}

Take \eqqcn{twoarrowsS}{ \arr{rl}{\rho_p(\Xi)
= & \{\xa_{i,j,1}\xa_{j,h,0} || i,h ,j \in \tQ_0(\Xi), i\neq h \} \cup \{ \xb_{i,j,1}\xb_{j,h,0} | i,h ,j \in \tQ_0(\Xi), i\neq h  \} \\ &\cup \{ \xb_{i,j,1}\xa_{j,h,0}| i,h ,j \in \tQ_0(\Xi), i\neq h  \}\cup  \{ \xb_{i,j,1}\xa_{j,h,0} | i,h ,j \in \tQ_0(\Xi), i\neq h\} ,}}
and \eqqcn{onearrowS}{\rho_a(\Xi) =\{\xa_{i,j,1}\xc_{i,0} - \xc_{j,1}\xa_{i,j,0},  \xb_{j,i,1} \xc_{j,0} - \xc_{i,1}\xb_{j,i,0} | \xa_{i,j}\in \tQ_1, i<j \}.}
Write \eqqcn{onearrowpS}{{\rho_a}^\perp(\Xi) =\{\xa_{i,j,1}\xc_{i,0} + \xc_{j,1}\xa_{i,j,0},  \xb_{j,i,1}\xc_{j,0} + \xc_{i,1}\xb_{j,i,0} | \xa_{i,j}\in \tQ_1, i<j\}.}
They are subsets of the space $k\tQ_2(\Xi)$ spanned by the paths of length $2$, and $\rho_a(\Xi)$ and ${\rho_a}^\perp(\Xi)$ span orthogonal subspaces in the subspace spanned by $\rho_a(\Xi)\cup {\rho_a}^\perp(\Xi)$.

Take a subset $J\subseteq \tQ_0(\Xi)$.
Let \eqqc{defrho}{\rho(\Xi,J) = \rho_p(\Xi) \cup \rho_a(\Xi) \cup \bigcup_{i\in \tQ_0\setminus J} U_{i,0} \cup \bigcup_{i\in J} U^-_{i,0},}  and  \eqqc{deftrhop2sa}{\rho^{\perp}(\Xi,J) ={\rho_a}^\perp(\Xi) \cup \bigcup_{i\in \tQ_0\setminus J} U^{\perp}_{i,0} \cup \bigcup_{i\in J} U^{-,\perp}_{i,0}.}

We have the following descriptions of the relation sets for the $\tau$-slice algebras and for the $n$-slice algebras associated to the McKay quiver $\tQ(\Xi)$.
\begin{pro}\label{relQtauslc}
Let $\Xi$ be $A_l$, $D_l$, with $l\ge 4$, and $E_6, E_7, E_8$.
If $\LL=kQ(\Xi)/I$ is  $\tau$-slice algebra with $\dtL$ a $2$-translation algebra then there is a subset $J$ of $\tQ_0(\Xi)$  such that  $I=(\rho(\Xi,J))$.
\end{pro}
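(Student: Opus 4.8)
\emph{Sketch of the intended proof.} The plan is to reduce the statement to the description of the $2$-translation ideals of the McKay quiver $\tQ(\Xi)$ obtained in Proposition \ref{relQed}, by running that same analysis on the trivial extension $\dtL$. First note that, since $\LL=kQ(\Xi)/I$ is a $\tau$-slice algebra with $\dtL$ a $2$-translation algebra, $\dtL$ is in fact a stable $2$-translation algebra with trivial $2$-translation whose bound quiver is the returning arrow quiver $\tQ'$ of $Q(\Xi)$ (Proposition 3.1 of \cite{g20}); being $(3,q)$-Koszul it is quadratic, so $\LL$ is quadratic and $I$ is generated by $I\cap kQ_2(\Xi)$. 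Next I would pin down $\tQ'$: its non-returning arrows are exactly the arrows of $Q(\Xi)$, namely $\xa_{i,j,d},\xb_{j,i,d},\xc_{i,d}$ for $d=0,1$; and — using that $\dtL$ is a stable $2$-translation algebra with trivial translation, so $\dmn_k e_{(h,2)}\dtL_2 e_{(i,0)}=\dmn_k e_{(i,0)}\dtL_1 e_{(h,2)}$ by Lemma 2.1 of \cite{gw00}, together with the relation analysis below — the returning arrows from layer $2$ to layer $0$ turn out to be $\xa_{i,j,2},\xb_{j,i,2},\xc_{i,2}$. Thus $\tQ'$ is the $\zZ/3\zZ$-fold unfolding of $\tQ(\Xi)$ along its grading: each loop $\xc_i$, each $\xa_{i,j}$ and each $\xb_{j,i}$ of $\tQ(\Xi)$ contributes three layer-shifting arrows to $\tQ'$, the last of them a returning arrow, and $Q(\Xi)$ is the full convex subquiver on the layers $0,1,2$, as in the pictures above.

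The heart of the argument is to determine $I$ by applying to $\dtL$, layer by layer, exactly the reasoning used for $\tQ(\Xi)$. Propositions \ref{commrel} and \ref{zerorel} only require that the algebra be a stable $2$-translation algebra with trivial translation, so they apply to $\dtL$; moreover each vertex family $(i,\cdot)$ of $\tQ'$ has the same incidence data as $i$ in $\tQ(\Xi)$ (at most one arrow between distinct vertices, arrows occurring in opposite pairs, and the loop-replacement arrow $\xc_{i,d}$ at every level), so the hypotheses of Lemma \ref{McKay_arrows} transfer. Hence the computations of Lemma \ref{diffih} (paths through distinct vertices) and of Lemmas \ref{loop_one_arrow}, \ref{loop_two_arrow}, \ref{loop_three_arrow} (length-two paths through a vertex family, sorted by how many non-loop arrows leave it) carry over. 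Restricting the relations produced to the layers $0,1,2$, i.e. to $Q(\Xi)$, and after rescaling the representatives of $\xa_{i,j,1},\xb_{j,i,1},\xc_{i,1}$ so that all scalars in the commutativity-type relations become $1$ — possible since the layers $0,1,2$ carry no oriented cycle through a returning arrow, cf. the remark preceding \eqref{relnpg} — one gets $\rho(\Xi,J)\subseteq I$ for $J=\{\,i\in\tQ_0(\Xi)\mid \xc_{i,1}\xc_{i,0}\in I\,\}$, with $\rho(\Xi,J)$ as in \eqref{defrho}. This step may equivalently be organised through the smash products $\dtL\#k\zZ^*$ (bound quiver $\zzs{1}Q(\Xi)$, Section 5 of \cite{g16}) and $\tL(\Xi)\#' k\zZ^*$ (bound quiver $\zZ_\vv\tQ(\Xi)$, Theorem 5.12 of \cite{g12}), so that $\dtL$ is obtained from the stable $2$-translation algebra $\tL(\Xi)$ on $\tQ(\Xi)$ by unfolding along $\zZ/3\zZ$, and Proposition \ref{relQed} supplies $J$.

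Finally I would promote the inclusion $\rho(\Xi,J)\subseteq I$ to an equality exactly as at the end of the proof of Proposition \ref{relQed}: the algebra $kQ(\Xi)/(\rho(\Xi,J))$ surjects onto $\LL$, and a direct computation of $\dmn_k e_v\,(kQ(\Xi)/(\rho(\Xi,J)))_t\, e_u$ from the relation set shows it agrees, in every degree, with $\dmn_k e_v\LL_t e_u$, which is forced by the Loewy matrix of $\dtL$ having the form \eqref{LoewyL} with adjacency matrix that of $\tQ'$; hence $I=(\rho(\Xi,J))$. The step I expect to be the main obstacle is the second one: checking with care that the per-layer analysis on $\tQ'$ genuinely reproduces Proposition \ref{relQed} — equivalently, that the length grading on $\dtL\#k\zZ^*$ matches the one on $\tL(\Xi)\#' k\zZ^*$ under $\zzs{1}Q(\Xi)\cong\zZ_\vv\tQ(\Xi)$ — so that the set $J$ is well defined and uniform across the three layers; the remainder is the bookkeeping already carried out in Propositions \ref{relQsr2tr} and \ref{relQed}.
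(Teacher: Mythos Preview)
The paper states Proposition~\ref{relQtauslc} without proof, treating it as immediate from Proposition~\ref{relQed} together with the construction of $\zZ_{\vv}\tQ(\Xi)$ and the parameter normalisation recorded just before \eqref{relnpg}: the relations for the slice $Q(\Xi)=\zZ_{\vv}\tQ(\Xi)[0,2]$ are $\{z[0]\mid z\in\trho(\Xi,J,C_J)\}$, and rescaling the level-$1$ arrows makes all parameters equal to $1$, yielding $\rho(\Xi,J)$. Your proposal correctly reconstructs this implicit argument and supplies the missing justifications---in particular the identification of the returning arrow quiver of $Q(\Xi)$ with the $\zZ/3\zZ$-unfolding of $\tQ(\Xi)$, and the transfer of the per-vertex analysis of Lemmas~\ref{diffih}--\ref{loop_three_arrow} to $\dtL$.

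Your route is slightly more direct than the paper's: rather than going through $\zZ_{\vv}\tQ(\Xi)$ and descending, you analyse $\dtL$ on its own returning arrow quiver $\tQ'$ and restrict. This has the advantage of making the logical direction of the ``if\ldots then'' statement transparent: you start from an arbitrary $\LL$ with $\dtL$ a $2$-translation algebra and derive the form of $I$, whereas the paper's text really only exhibits a family of such $\LL$ and leaves the exhaustiveness implicit. The obstacle you flag---that the returning arrows of $Q(\Xi)$ depend on the maximal bound paths, hence on $I$---is genuine but is resolved exactly as you indicate, via the duality $\dmn_k e_{(h,2)}\dtL_2 e_{(i,0)}=\dmn_k e_{(i,0)}\dtL_1 e_{(h,2)}$ for a stable $2$-translation algebra with trivial translation; once the relations from $\rho_p$, $\rho_a$ and the $U_i$, $U_i^-$ are in hand the dimension count is forced and the returning arrows are as claimed. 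The final equality $I=(\rho(\Xi,J))$ via the dimension comparison against the Loewy matrix is exactly parallel to the end of the proof of Proposition~\ref{relQed}, as you say.
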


\begin{pro}\label{nslcalg}
Let $\Xi$ be $A_l$, $D_l$, with $l\ge 4$, and $E_6, E_7, E_8$.
If $\GG =kQ(\Xi)/I$ is  $2$-slice algebra, then  there is a subset $J$ of $\tQ_0(\Xi)$  such that  $I=(\rho^{\perp}(\Xi,J))$.
\end{pro}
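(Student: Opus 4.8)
The statement is the quadratic dual of Proposition~\ref{relQtauslc}, and the plan is to deduce it from that proposition together with the duality formalism of Section~\ref{sec:pre}. Assume $\GG=kQ(\Xi)/I$ is a $2$-slice algebra and set $\LL=\GG^{!,op}$. By the definition of a $2$-slice algebra, $Q(\Xi)$ is an acyclic quadratic $2$-properly-graded quiver, $\LL$ is the algebra it defines, and the trivial extension $\dtL$ is a stable $2$-translation algebra, in particular a $2$-translation algebra; by Proposition~\ref{tau-slice}, $\LL$ is moreover a $\tau$-slice algebra. Hence Proposition~\ref{relQtauslc} applies: after choosing the representatives of the arrows of $Q(\Xi)$ suitably, there is a subset $J\subseteq\tQ_0(\Xi)$ with $\LL\simeq kQ(\Xi)/(\rho(\Xi,J))$ for the normalized relation set $\rho(\Xi,J)$ of \eqref{defrho}. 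Since $\GG$ is quadratic and $\GG=\LL^{!,op}$ is the algebra defined by the quadratic dual of $Q(\Xi)$, the ideal $I$ is generated by any spanning set of the orthogonal complement of $\operatorname{span}_k\rho(\Xi,J)$ inside $kQ_2(\Xi)$. It therefore suffices to show that, for that same choice of representatives, $\rho^{\perp}(\Xi,J)$ of \eqref{deftrhop2sa} is such a spanning set.

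This is verified block by block. Since $Q(\Xi)=\zZ_{\vv}\tQ(\Xi)[0,2]$ is nicely graded with grading the second coordinate, paths of length $2$ run only from vertices $(i,0)$ to vertices $(j,2)$, so $kQ_2(\Xi)=\bigoplus_{i,j}e_{(j,2)}kQ_2(\Xi)e_{(i,0)}$ is an orthogonal decomposition for the quadratic pairing, and it is enough to match $\rho(\Xi,J)$ and $\rho^{\perp}(\Xi,J)$ inside each summand. For fixed $i,j$ the assignment $p\mapsto p[0]$ identifies $e_{(j,2)}kQ_2(\Xi)e_{(i,0)}$ with $e_j k\tQ_2(\Xi) e_i$ compatibly with the pairings, under which the length-two paths form their own dual bases; so the required matching reduces in each block to the quadratic-duality statements already established for the McKay quiver $\tQ(\Xi)$. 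Concretely, a block with no arrow of $\tQ(\Xi)$ joining $i$ and $j$ is spanned by the paths of $\rho_p(\Xi)$ factoring through a third vertex, whose orthogonal complement is zero by Lemma~\ref{diffih}, and $\rho^{\perp}(\Xi,J)$ correctly contributes nothing there; a block where $i\ne j$ are joined by an arrow of $\tQ(\Xi)$ is two-dimensional, and $\rho_a(\Xi)$, ${\rho_a}^{\perp}(\Xi)$ restrict to the spans of a difference and of the matching sum of its two basis paths, hence to orthogonal complements — this is Lemma~\ref{orthogonal_in_ij} transported to the $[0,2]$-slice; and the block with $i=j$ is spanned by $\xc_{i,1}\xc_{i,0}$ together with the paths through the neighbours of $i$, where — according to the number of arrows leaving $i$ and to whether $i\in J$ — part~(2) of Lemma~\ref{loop_one_arrow}, part~(3) of Lemma~\ref{loop_two_arrow}, part~(3) of Lemma~\ref{loop_three_arrow}, and Lemma~\ref{quadraticDualU} show that $U_i(C_i)$ (resp.\ $U^-_i(C'_i)$) and $U^{\perp}_i(C_i)$ (resp.\ $U^{-,\perp}_i(C'_i)$), in the normalized forms \eqref{defUS}--\eqref{defUmp}, span complementary orthogonal subspaces whose dimensions sum to $|\tQ^{i,i}_2(\Xi)|$. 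Summing over all $i,j$ shows that $\rho^{\perp}(\Xi,J)$ spans the full orthogonal complement of $\operatorname{span}_k\rho(\Xi,J)$, whence $I=(\rho^{\perp}(\Xi,J))$.

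I expect the only non-routine part to be the loop-squared blocks $i=j$, where both $|\tQ^{i,i}_2(\Xi)|$ and the explicit orthogonal bases depend on whether $i$ has one, two, or three outgoing arrows; but that case analysis is exactly what Lemmas~\ref{loop_one_arrow}--\ref{loop_three_arrow} and Lemma~\ref{quadraticDualU} carry out for $\tQ(\Xi)$, and the transfer to $\zZ_{\vv}\tQ(\Xi)[0,2]$ via $z\mapsto z[0]$ is purely formal, since it respects both the block decomposition and the self-dual pairing on length-two paths. Thus the main obstacle is organizational: sorting the members of $\rho_p(\Xi)$, $\rho_a(\Xi)$, $U_i$, $U^-_i$ (and of their duals) into the blocks they occupy and checking that the dimension count is exact in each, so that orthogonal complementation — an involution on the subspaces of $kQ_2(\Xi)$ — indeed carries $\rho(\Xi,J)$ to $\rho^{\perp}(\Xi,J)$.
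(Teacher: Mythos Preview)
Your proposal is correct and follows essentially the same route as the paper. The paper gives no explicit proof for this proposition, treating it (together with Proposition~\ref{relQtauslc}) as an immediate consequence of the preceding development---the normalization of parameters, the explicit orthogonal bases in Lemmas~\ref{orthogonal_in_ij} and~\ref{quadraticDualU}, and Proposition~\ref{qdrels}; your argument simply spells out the quadratic-duality step and the block-by-block dimension count that the paper leaves implicit.
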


\subsection{Remarks.}
\begin{enumerate}
\item By constructing $\zZ_{\vv}\tQ$ for given $\tQ(G), J, C$, and taking a complete $\tau$-slice $Q$ in one of its connected component, we get an indecomposable $\tau$-slice algebra $\LL = kQ/(\rho)$, and a $2$-slice algebra $kQ/(\rho^{\perp})$.
    For a given complete $\tau$-slice, we can change the representatives of the arrows so the relation set have the same expressions, so the $\tau$-slice algebras and the $2$-slice algebras are isomorphic, respectively (for a fixed subset $J$ of the vertices in the case $\tQ=\tQ(\Xi)$).
    The algebra $\LL$ dependents only on the quiver $Q$ and the set $J$, but is independent of the parameters in $C$.

\item    The complete $\tau$-slices in  $\zZ_{\vv}\tQ$ are usually non-isomorphic as quivers.
    But they all can be obtained from any one of them by a sequence of $\tau$-mutations by \cite{g12}.
    By \cite{gx20}, the corresponding $2$-slice algebras are obtained by a sequence of $2$-APR tilts from the one described in Propositions \ref{2slcrelp} and  \ref{nslcalg}.

\item Though we need the field $k$ containing $\mathbb C$ for defining the McKay quiver, our results in this section are valid for any field, we only need the McKay quiver and the combinatory data in the Loewy matrix related to the McKay quiver, at least in case of $n=2$.
\end{enumerate}

{\bf Acknowledgements.}
We would like to thank  the referees  for reading the manuscript carefully and for suggestions and comments on revising and improving the paper.
They also thank the  referee for bring \cite{llmss20} to their attention.
{}

\end{document}